\documentclass[12pt,a4paper,reqno]{amsart}
\usepackage[english]{babel}
\usepackage{amssymb,latexsym,amsfonts,amsthm,upref,amsmath}
\usepackage[foot]{amsaddr}
\usepackage[margin=1in]{geometry} 
\usepackage[dvipsnames,x11names]{xcolor}
 \definecolor{myblue}{HTML}{003399}
\usepackage{hyperref}
\hypersetup{colorlinks,citecolor=myblue,filecolor=black,linkcolor=myblue,urlcolor=myblue}
\usepackage{enumerate}  
\usepackage{tikz}
\usepackage{float}
\usepackage{cleveref}
\usepackage{mathtools}
\usepackage{cite}
\usepackage{filecontents}
\usepackage{enumitem}
\makeatletter
\newcommand{\leqnomode}{\tagsleft@true}
\newcommand{\reqnomode}{\tagsleft@false}

\makeatother
\newtheorem*{thm*}{Theorem}
\newtheorem*{lem*}{Lemma}
\newtheoremstyle{prim}{}{}{\normalfont}{}{\bfseries}{.}{ }{}
\newtheoremstyle{stil}{}{}{\slshape}{}{\bfseries}{.}{ }{}
\theoremstyle{stil}
\newtheorem{thm}{Theorem}[section]
\newtheoremstyle{defi}{}{}{}{}{\bfseries}{.}{ }{}
\theoremstyle{defi}
\newtheorem{defn}[thm]{Definition}
\theoremstyle{defi}

\theoremstyle{stil}
\newtheorem*{mthm*}{Main Theorem}
\newtheorem*{kor*}{Corollary}

\theoremstyle{stil}
\newtheorem{lem}[thm]{Lemma}
\theoremstyle{stil}
\newtheorem{kor}[thm]{Corollary}
\theoremstyle{prim}
\newtheorem{ex}[thm]{Example}
\newenvironment{prf}{\noindent \textit{Proof.}}{\null\hfill$\qed$\hskip
2mm\vskip 2mm}

\newcommand{\DYeven}{{\rm DY} (\mathfrak{gl}_{M })}

\newcommand{\dygext}{ {\rm DY}(\mathfrak{gl}_{M|N})^{ext}}
\newcommand{\dygextfin}{ {\rm DY}(\mathfrak{gl}_{M|N})_{fin}^{ext}}
\newcommand{\dyg}{ {\rm DY}(\mathfrak{gl}_{M|N})}
\newcommand{\dygc}{ {\rm DY}_c(\mathfrak{gl}_{M|N})}

\newcommand{\dygtld}{ \wtld{\rm DY}_{crit}(\mathfrak{gl}_{M|N})}
\newcommand{\dygctld}{ \wtld{\rm DY}_{c}(\mathfrak{gl}_{M|N})}
\newcommand{\dygNMtld}{ \wtld{\rm DY}_{N-M}(\mathfrak{gl}_{M|N})}

\newcommand{\Y}{ {\rm Y}(\mathfrak{gl}_{M|N})}
\newcommand{\Yd}{ {\rm Y}^+(\mathfrak{gl}_{M|N})} 
\newcommand{\Ydext}{ {\rm Y}^+(\mathfrak{gl}_{M|N})^{ext}} 

\newcommand{\U}{ {\rm U}}

\newcommand{\B}{ {\rm B}(\mathfrak{gl}_{M|N})}
\newcommand{\Bd}{ {\rm B}^+(\mathfrak{gl}_{M|N})}
\newcommand{\DB}{ {\rm DB} (\mathfrak{gl}_{M|N})} 
\newcommand{\DBB}{  \mathcal{DB}  (\mathfrak{gl}_{M|N})} 
\newcommand{\DBBc}{  \mathcal{DB}_c  (\mathfrak{gl}_{M|N})} 
\newcommand{\DBBctld}{  \widetilde{\mathcal{DB}}_c  (\mathfrak{gl}_{M|N})} 
\newcommand{\DBBcrittld}{  \widetilde{\mathcal{DB}}_{crit}  (\mathfrak{gl}_{M|N})} 
\newcommand{\DBBNMtld}{  \widetilde{\mathcal{DB}}_{(N-M)/2}  (\mathfrak{gl}_{M|N})} 
\newcommand{\BB}{ \mathcal{B} (\mathfrak{gl}_{M|N})}
\newcommand{\BBd}{ \mathcal{B}^+(\mathfrak{gl}_{M|N})}
 \newcommand{\BBdc}{ \mathcal{B}_c^+(\mathfrak{gl}_{M|N})_h}
   \newcommand{\BBdNM}{ \mathcal{B}_{(N-M)/2}^+(\mathfrak{gl}_{M|N})_h}
   \newcommand{\BBdcrit}{ \mathcal{B}_{crit}^+(\mathfrak{gl}_{M|N})_h}
\newcommand{\R}{ {\overline{R}}}

\newcommand{\vac}{\mathop{\mathrm{\boldsymbol{1}}}}

\newcommand{\gr}{\mathop{\mathrm{gr}}}

\newcommand{\glmn}{\mathfrak{gl}_{M|N}}
\newcommand{\glmnht}{\widehat{\mathfrak{gl}}_{M|N}}
\newcommand{\gl}{\mathfrak{gl}}

\newcommand{\CC}{\mathbb{C}}

\newcommand{\ZZ}{\mathbb{Z}}

\newcommand{\Sc}{\mathcal{S}}

\newcommand{\Dc}{\mathcal{D}}

\newcommand{\wtld}{\widetilde}

\newcommand{\wndr}{\underline}

\newcommand{\ot}{\otimes}
\newcommand{\ts}{\hspace{1pt}}

\newcommand{\str}{ \mathop{\rm str}}

\newcommand{\ndo}{\mathop{\mathrm{End}}}
\newcommand{\om}{\mathop{\mathrm{Hom}}}

\newcommand{\diag}{\mathop{\mathrm{diag}}}

\newcommand{\fand}{\quad\text{and}\quad}
\newcommand{\Fand}{\qquad\text{and}\qquad}

\newcommand{\non}{\nonumber}
\newcommand{\beq}{\begin{equation}}
\newcommand{\eeq}{\end{equation}}
\newcommand{\ben}{\begin{equation*}}
\newcommand{\een}{\end{equation*}}

\makeatletter
\def\smalloverbrace#1{\mathop{\vbox{\m@th\ialign{##\crcr\noalign{\kern3\p@}%
  \tiny\downbracefill\crcr\noalign{\kern3\p@\nointerlineskip}%
  $\hfil\displaystyle{#1}\hfil$\crcr}}}\limits}
\makeatother

\makeatletter
\def\smallunderbrace#1{\mathop{\vtop{\m@th\ialign{##\crcr
   $\hfil\displaystyle{#1}\hfil$\crcr
   \noalign{\kern3\p@\nointerlineskip}%
   \tiny\upbracefill\crcr\noalign{\kern3\p@}}}}\limits}
\makeatother

\begin{document}

\title{Double Yangian and  reflection algebras of the Lie superalgebra $\mathfrak{gl}_{M|N}$, II: Quantum currents}

\author{Lucia Bagnoli}
\address[L. Bagnoli]{Dipartimento di Matematica, Sapienza Universit\`{a} di Roma, P.le Aldo Moro 5, 00185 Rome, Italy \& INFN sezione di Roma}
\email{lucia.bagnoli@uniroma1.it}

\author{Slaven Ko\v{z}i\'{c}}
\address[S. Ko\v{z}i\'{c}]{University of Zagreb Faculty of Science, Department of Mathematics, Bijeni\v{c}ka cesta 30, 10000 Zagreb, Croatia}
\email{slaven.kozic@math.hr}

\begin{abstract}
In this paper, we continue our research on the double Yangian and  reflection algebras of the Lie superalgebra $\mathfrak{gl}_{M|N}$. 
 Extending the Etingof--Kazhdan construction, we introduce 
the structure of $h$-adic quantum vertex superalgebra on
 the vacuum module $V^c(\mathfrak{gl}_{M|N})$ over the double Yangian  of level   $c\in\CC$. 
Next, we construct families of central elements in the completed double Yangian for $\mathfrak{gl}_{M|N}$ at the critical level. Finally, we show that the level $c$ restricted modules over the reflection algebra of   $\mathfrak{gl}_{M|N}$ are naturally equipped with the structure of quasi $V^{2c}(\mathfrak{gl}_{M|N})$-module. By using this structure, we find explicit formulas for families of central elements in the completed reflection algebra of   $\mathfrak{gl}_{M|N}$ at the critical level.
\end{abstract}

\maketitle

\allowdisplaybreaks
\section{Introduction}\label{section_1}
\numberwithin{equation}{section}

The   Yangian $\Y$ of the Lie superalgebra $\mathfrak{gl}_{M|N}$ was introduced by Nazarov \cite{Naz} in terms of the $R$-matrix presentation.
It has been extensively  studied for its mathematical properties (see, e.g., the papers \cite{LGow,LM,T, M, P} and references therein), as well as for its connections with various areas of mathematical physics \cite{AK,JWW,CR,HY}.
Its centrally extended double, the double Yangian  $\dyg$ of the Lie superalgebra   $\mathfrak{gl}_{M|N}$,
 was introduced by  Zhang \cite{Z}, also
 in terms of the $R$-matrix presentation.
Recently, the double Yangians of the Lie superalgebras $\mathfrak{gl}^{\mathfrak{s}}_{M|N}$ and $\mathfrak{sl}^{\mathfrak{s}}_{M|N}$, associated with
an  arbitrary $0^M 1^N$-sequence $\mathfrak{s}$, were introduced and studied
by  Xu, Lin and Zhang \cite{XLZ}.

In this paper, we continue our investigation \cite{BK,BK2} of the double Yangian $\dyg$ and its reflection equation subalgebras. The main motivation for this research comes from the   quantum vertex algebra theory. The notion of quantum vertex algebra, as well as its first examples, which were associated with the rational, trigonometric and elliptic $R$-matrices of type $A$, goes back to Etingof and Kazhdan \cite{EK}. The rational case employs   the double Yangian $\DYeven$ of the Lie algebra $\mathfrak{gl}_{M }$ to construct the quantum vertex algebra structure over its vacuum module $V^c(\mathfrak{gl}_{M })$ of level $c\in\CC$.
The $\mathcal{S}$-locality  in   $V^c(\mathfrak{gl}_{M })$, a quantum analogue of the locality axiom from vertex algebra theory, takes the form of the quantum current commutation relation of Reshetikhin and Semenov-Tian-Shansky \cite{RS}. 
Later on, different aspects of the quantum vertex algebra theory,  closely related to the double Yangians, were studied by several authors; see, e.g,  \cite{DGK,Li,JKMY}. Recently, $h$-adic quantum vertex algebras were associated with double Yangians of all types, via their Drinfeld current type presentation, and the corresponding lattice $h$-adic quantum vertex algebra construction was established by Kong and Li \cite{KL1,KL2}.

In this paper, we consider the double Yangian $\dyg$ in its
  $R$-matrix presentation,
\begin{align*}
R_{12}(u-v)\ts T_1^\pm (u)\ts T_2^\pm (v)
&= T^\pm_2(v)\ts T^\pm_1(u)\ts R_{12}(u-v), \\
\R_{12}(u-v+C/2)\ts T^-_1 (u)\ts T_2^+ (v)
&= T_2^+ (v)\ts T^-_1 (u)\ts \R_{12}(u-v-C/2); 
\end{align*}
see Section \ref{section_2} for a precise formulation.
In this setting, the aforementioned Reshetikhin--Semenov-Tian-Shansky quantum current, which plays an important role throughout the entire paper,
takes the (usual) form 
\beq\label{quantumcurrent}
\mathcal{T}(u) = T^+(u) \ts T^-(u+C/2)^{-1}.
\eeq

Moreover, we consider the reflection equation subalgebra $\DB$ of the double Yangian.  The algebras associated with the reflection equation were    introduced by Sklyanin \cite{S} to describe integrable systems with the boundary conditions; see  \cite{GS,KS,KJC,MRS} for more information on such algebras and their applications. 
The algebra $\DB$,   investigated in this paper, is based on   the reflection algebras introduced by Molev and Ragoucy \cite{MR}.
In Section \ref{section_5}, we prove its $R$-matrix presentation in terms of generators which are subject to three reflection equations,
\begin{align*}
R (u-v)\ts B_1^\pm (u)\ts  R (u+v )\ts B_2^\pm (v)
&= B^\pm_2(v)\ts  R (u+v )\ts B^\pm_1(u)\ts R (u-v),   \\
 \R (u-v +C)\ts B^-_1 (u)\ts  \R (u+v -C)\ts B_2^+ (v)
&= B_2^+ (v)\ts  \R (u+v+C )\ts B^-_1 (u)\ts  \R (u-v -C),   
\end{align*}
and two unitarity equations; see Section \ref{section_5} for a precise formulation. This
  extends our results from \cite{BK}, which were obtained for $C=0$.

In Section \ref{section_4}, we consider  the  vacuum module $V^c(\mathfrak{gl}_{M|N})$ of level $c\in\CC$ over the double Yangian $\dyg$. 
We show that it can be equipped with the structure of $h$-adic quantum vertex superalgebra, such that every restricted $\dyg$-module of level $c\in\CC$ is also a $V^c(\mathfrak{gl}_{M|N})$-module.
 This can be regarded as a supersymmetric counterpart of the Etingof--Kazhdan construction of the quantum affine vertex algebra  $V^c(\mathfrak{gl}_{M })$ \cite{EK}.
As with the even case,   the   $\Sc$-locality in $V^c(\mathfrak{gl}_{M|N})$ possesses the form of the   quantum current commutation relation, which is now $\ZZ/2\ZZ$-graded. 

In Section \ref{section_3}, we construct families of central elements
in a certain completion of the double Yangian $\dyg$ at the critical level $N-M$.
This generalizes the construction of Jing, Molev, Yang and the second author \cite{JKMY}, which was obtained in the even case, i.e., for the double Yangian $\DYeven$.
The families are parametrized by Young tableaux, and their construction relies  on the fusion procedure for the symmetric group, which goes back to Jucys \cite{Juc}, while their form utilizes the normally ordered product of quantum currents \eqref{quantumcurrent}. 
Finally, we employ this result to find explicit formulas for the families of central elements  in the $h$-adic quantum vertex superalgebra $V^c(\mathfrak{gl}_{M|N})$ at the critical level $c=N-M$.
The trigonometric counterpart of this  construction, in the setting of   quantum affine superalgebra  $\mathrm{U}_q(\widehat{\mathfrak{gl}}_{M|N})$, was recently obtained by Jing, Liu and Zhang \cite{JLZ}.

In Section \ref{section_last}, we derive  supersymmetric counterparts of the  results from \cite{Koz}.
We use the analogue of the quantum current operator  \eqref{quantumcurrent} for the reflection algebra $\DB$,  
\beq\label{quantumcurrent2}
\mathcal{B}(u) = B^+(u) \ts B^-(u+C )^{-1},
\eeq 
 to prove that  every restricted $\DB$-module of level $c\in\CC$ is   equipped with the structure of quasi 
$V^{2c}(\mathfrak{gl}_{M|N})$-module.
Furthermore, by employing the corresponding quasi module map, we extend the construction of families of central elements from Section \ref{section_3} to the suitably completed reflection equation algebra at the critical level. As before, the construction relies on the fusion procedure, while the explicit formulas for the central elements employ  the form of   normally ordered product of quantum currents \eqref{quantumcurrent2}.

\section{Double Yangian for \texorpdfstring{$\glmn$}{glm|n}}\label{section_2}
 
In this section, we follow the exposition in \cite[Sect. 2]{BK} to recall the double Yangian $\dyg$ for the Lie superalgebra $\mathfrak{gl}_{M|N}$. 
Let $\ZZ_2=\ZZ / 2\ZZ =\left\{\bar{0},\bar{1}\right\}$.
Throughout the paper, we assume that $M\neq N$.
Consider the Lie superalgebra 
$$\glmnht=\glmn\ot\CC[t,t^{-1}]\oplus\CC K .$$
Let
$e_{ij}\in \glmn$ be the  matrix units and  $e_{ij}(r)=e_{ij}\ot t^r$ for all $r\in\ZZ$. The 
   parity of
the element
$e_{ij}(r)$ is defined to be $\bar{i}+\bar{j}$, where  $\bar{i}=0$ (resp. $\bar{i}=1$) for $i=1,\ldots ,M$ (resp. $i=M+1,\ldots ,M+N$). Moreover, the element $K$ is defined to be even and central. 
The supercommutation relations in $\glmnht$ are given by
\begin{align*}
[e_{ij}(r),e_{kl}(s)]
=&\, 
\delta_{kj}\ts  e_{il}(r+s)
-  \delta_{il}\ts e_{kj}(r+s) (-1)^{(\bar{i}+\bar{j})(\bar{k}+\bar{l})} \\
&+K\left(  \delta_{kj}\ts \delta_{il}(-1)^{\bar{i}} -\frac{\delta_{ij}\ts \delta_{kl}}{M-N}\ts (-1)^{\bar{i}+\bar{k}}\right)\ts r\ts \delta_{r+s\, 0}.
\end{align*}

The defining relations for the double Yangian can be expressed in terms of the  rational $R$-matrix
  $R(u)\in\ndo\CC^{M|N}\ot \ndo\CC^{M|N} [u^{-1}]$,   given by
\beq\label{R}
R(u)=I-Pu^{-1},\qquad\text{where}\qquad P=\sum_{i,j=1}^{M+N} e_{ij}\ot e_{ji}\ts (-1)^{\bar{j}}
\eeq
and $I$ is the identity. The $R$-matrix $R(u)$ satisfies the {\em quantum Yang--Baxter equation},
\beq\label{ybe}
R_{12}(u )\ts R_{13}(u+v)\ts R_{23}(v)=
R_{23}(v)\ts R_{13}(u+v)\ts R_{12}(u ).
\eeq

Let $g(u) $ be a unique formal power series
\beq\label{function_g}
g(u)=1+\sum_{l\geqslant 1} \frac{g_l}{u^l}\in 1+u^{-1}\CC[[u^{-1}]]
\eeq
such that
$$
g(u+M-N)=\left(1-u^{-2}\right)g(u);
$$ 
see \cite[Sect. 2.2]{JKMY} for more information. The normalized $R$-matrix 
\beq\label{Rbar}
\R(u)=g(u)R(u)
\eeq
satisfies the Yang--Baxter equation \eqref{ybe}, the {\em unitarity condition},
$$
\R(u) \ts\R(-u)=1,
$$ 
  and the {\em crossing symmetry identities},
\beq\label{csym}
\left(\R(u-M+N)^{-1}\right)^{\tau_i}\ts \R(u )^{\tau_i}=I\qquad\text{for }i=1,2,
\eeq
where
$\tau_i$ is the action of the   supertransposition
 \beq\label{supertransposition}
\tau\colon e_{rs}\mapsto (-1)^{\bar{r}\bar{s}+\bar{r}}e_{sr}, \quad  
r,s=1,\ldots ,M+N,
 \eeq
     on the 
$i$-th tensor factor.

The
{\em double Yangian $\dyg$ for $\mathfrak{gl}_{M|N}$} is defined as the $\ZZ_2$-graded unital associative algebra over the complex field generated by the elements $C$ and $t_{ij}^{(\pm r)}$  with $  i,j=1,\ldots , M+N$ and $r=1,2,\ldots,$ 
where $C$ is   even and central   and  the parity of
  $t_{ij}^{(\pm r)}$  is  $\bar{i}+\bar{j}$.
The generators are
subject to the  defining relations which are written in terms of the  matrices 
$$
T^\pm (u)=\sum_{i,j=1}^{M+N} (-1)^{\bar{i}\bar{j}+\bar{j}} e_{ij}\ot t^\pm_{ij}(u),
 $$
 where
$$
t_{ij}^-(u)=\delta_{ij}+\sum_{r\geqslant 1} t_{ij}^{(r)} u^{-r}
\Fand 
t_{ij}^+ (u)=\delta_{ij}-\sum_{r\geqslant 1} t_{ij}^{(-r)} u^{r-1}.
$$
The relations  are given by  
\begin{align}
R_{12}(u-v)\ts T_1^\pm (u)\ts T_2^\pm (v)
&= T^\pm_2(v)\ts T^\pm_1(u)\ts R_{12}(u-v), \label{rtt12}\\
\R_{12}(u-v+C/2)\ts T^-_1 (u)\ts T_2^+ (v)
&= T_2^+ (v)\ts T^-_1 (u)\ts \R_{12}(u-v-C/2). \label{rtt3}
\end{align}

Define the degree operator on the  double Yangian by
\beq\label{degreeoperator}
 \deg  t_{ij}^{(r)}=r-1 ,\qquad
\deg  t_{ij}^{(-r)}=-r
\Fand
\deg C=0.
\eeq
It induces the ascending filtration   
$$
\ldots \subseteq\dyg^{(r)}\subseteq \dyg^{(r+1)}\subseteq\ldots\subseteq\dyg,
$$
where $\dyg^{(r)}$ is the linear span of the elements of $\dyg$ whose degrees do not exceed $r$.    
Let  $\gr   \dyg$ be the corresponding graded algebra and  denote the images of   generators in their respective components by $\bar{t}_{ij}^{(\pm r)}$ and $\bar{C}$.
By the Poincar\'{e}--Birkhoff--Witt Theorem for the double Yangian \cite[Thm. 2.9]{BK}, 
 the assignments
$$
e_{ij}(r-1)\mapsto (-1)^{\bar{i}}\ts \bar{t}_{ij}^{(r)},\qquad e_{ij}(-r)\mapsto (-1)^{\bar{i}}\ts \bar{t}_{ij}^{(-r)}   \Fand K\mapsto \bar{C} 
$$ 
 define   an isomorphism of $\mathbb{Z}_2$-graded algebras
\beq\label{isomorphism}
\U(\glmnht) \to \textstyle\gr \dyg,
\eeq
where $\U(\glmnht)$ is the universal enveloping algebra of $\glmnht$.

Another consequence of the   Poincar\'{e}--Birkhoff--Witt Theorem is that the subalgebra of $\dyg$
generated by all elements $t_{ij}^{(r)}$ (resp. $t_{ij}^{(-r)}$) with $r=1,2,\ldots$ is isomorphic to the {\em Yangian} $\Y$ (resp. the {\em dual Yangian}  $\Yd$).
Hence, \eqref{isomorphism} yields the isomorphisms
$$
\U(\glmn[t] ) \to \textstyle\gr \Y\Fand
\U(t^{-1}\glmn[t^{-1}]) \to \textstyle\gr \Yd.
$$
 Finally, define the {\em vacuum module $V^c(\glmn)$ of the level $c\in\CC$} as the quotient of the double Yangian $\dyg$ by its left ideal generated by $C-c\cdot 1$ and all elements $t_{ij}^{(r)}$ with   $r=1,2,\ldots .$ Clearly, as a complex vector space,   $V^c(\glmn)$ is isomorphic to $\Yd$.

\section{Reflection algebras for \texorpdfstring{$\glmn$}{glm|n}}\label{section_5}
In this section, we study the  centrally extended reflection  algebras, a certain class of subalgebras of the suitably completed double Yangian.
Our main goal is to  extend the presentation of these algebras  at the level zero, which was established in   \cite[Sect. 3]{BK}, to their central extension. 

\subsection{Reflection subalgebras of the double Yangian} 

 Consider the descending filtration on the dual Yangian $\Yd$ defined by setting the degree of $t_{ij}^{(-r)}$ with $r\geqslant 1$ to be equal to $r$. Define the {\em extended dual Yangian} $\Ydext$ as the   completion of $\Yd$ with respect to this filtration and the {\em extended double Yangian} $\dygext$ as the space of all finite $\CC[C]$-linear combinations of all products $xy$ for $x\in \Ydext$ and $y\in \Y$ with the multiplication extended by continuity from the double Yangian.
Extend the degree operator \eqref{degreeoperator}  to the algebra $\dygext$ by allowing it to take the infinite value. 
Then the
elements of finite degree form a subalgebra which we denote by $\dygextfin$.

Fix an integer $1\leqslant \ell\leqslant M+N$. Let $G=(g_{ij}) $ be the diagonal matrix 
\beq\label{gmatricca}
G =\diag(\varepsilon_1,\ldots ,\varepsilon_{M+N}),
\quad\text{where}\quad\varepsilon_1=\ldots =\varepsilon_\ell  =-\varepsilon_{\ell+1}=\ldots =-\varepsilon_{M+N}=1.
\eeq 
Consider the matrices 
$$
B^+(u) = T^+(u)\ts G\ts T^+(-u)^{-1}
\Fand
B^-(u) = T^-(u+C/2)\ts G\ts T^-(-u+C/2)^{-1}
,
$$
which belong to
$ \ndo\CC^{M|N}\ot \dygextfin[[u^{\pm 1}]]$.
They satisfy the {\em reflection relations}
\begin{align}
R (u-v)\ts B_1^\pm (u)\ts  R (u+v )\ts B_2^\pm (v)
&= B^\pm_2(v)\ts  R (u+v )\ts B^\pm_1(u)\ts R (u-v), \label{reflect12} \\
 \R (u-v +C)\ts B^-_1 (u)\ts  \R (u+v -C)\ts B_2^+ (v)
&= B_2^+ (v)\ts  \R (u+v+C )\ts B^-_1 (u)\ts  \R (u-v -C),  \label{reflect3}
\end{align}
and the {\em unitarity relations}
\beq\label{reflect4}
B^\pm (u)\ts B^\pm (-u)=1.
\eeq
The reflection relations can be verified by using the defining relations \eqref{rtt12} and \eqref{rtt3} for the double Yangian and the identity 
$
R (u)\ts G_1\ts R (v)\ts G_2
=
 G_2\ts R (v)\ts G_1\ts R (u)$.

Write the matrices $B^\pm(u)$ in the form
$$
B^\pm(u) =\sum_{i,j=1}^{M+N} (-1)^{\bar{i}\bar{j}+\bar{j}}e_{ij}\ot b_{ij}^\pm(u),
$$
where the coefficients of the series $b_{ij}^\pm (u)$ are given by
$$
b_{ij}^-(u)=g_{ij}+ \sum_{r\geqslant 1}b_{ij}^{(r)}u^{-r}
\Fand 
b_{ij}^+ (u)=g_{ij}- \sum_{r\geqslant 1} b_{ij}^{(-r)} u^{r-1}.
$$
Clearly, all elements   $b_{ij}^{(r)}$ (resp. $b_{ij}^{(-r)}$) with $r=1,2,\ldots$ belong  to $\Y[C]$ (resp.  $\Ydext$). Furthermore, the elements $b_{ij}^{(\pm r)}$ are homogenous and their parity   is  $\bar{i}+\bar{j}$. 
Denote by $\B$ (resp. $\Bd$) the subalgebra of 
$\Y[C]$ (resp.   $\Ydext$) generated by all   $b_{ij}^{(r)}$ (resp. $b_{ij}^{(-r)}$).
Finally, let $\DB$ be the  subalgebra of  $\dygextfin$
generated by   elements $C$ and  $b_{ij}^{(\pm r)}$.
We   refer to  
these subalgebras as {\em reflection algebras}.

Recall the isomorphism given by \eqref{isomorphism}. 
Denote by $\bar{b}_{ij}^{(\pm r)}$  (resp. $\bar{C}$) the image of the element $b_{ij}^{(\pm r)}$ (resp. $C$) in the corresponding component of the graded algebra 
 $\gr \DB\subset \gr \dygextfin\cong \U(\glmnht) $. 
Introduce the families $I_0,I_1\subset \left\{1,\ldots ,M+N\right\}^{\times 2}$    by
\begin{align}
&I_0=\left\{(i,j)\,:\,1\leqslant i,j\leqslant\ell\text{ or }\ell +1\leqslant i,j\leqslant M+N\right\},\label{indices0}\\
&I_1=\left\{(i,j)\,:\,1\leqslant i \leqslant\ell <j\leqslant M+N\text{ or }1\leqslant j \leqslant\ell <i\leqslant M+N\right\}.\label{indices1}
\end{align}
Next, introduce the sets
\begin{align}
&\Gamma^- = \left\{\bar{b}_{ij}^{( 2r )}\,:\, (i,j)\in I_1\text{ and }r\geqslant 1\right\}\cup \left\{\bar{b}_{ij}^{(2r-1)}\,:\, (i,j)\in I_0\text{ and }r\geqslant 1\right\} 
 ,\label{gammam}\\
&\Gamma^+=\left\{\bar{b}_{ij}^{(-2r)}\,:\, (i,j)\in I_0\text{ and }r\geqslant 1\right\}\cup
\left\{\bar{b}_{ij}^{(-2r+1)}\,:\, (i,j)\in I_1\text{ and }r\geqslant 1\right\}.\label{gammap} 
\end{align}
Let $\Gamma=\Gamma^-\cup\Gamma^+$
and $\Gamma_C=\Gamma\cup\left\{\bar{C}\right\}$.
Clearly, we have
$
\Gamma^- \subset  \gr \B$ and $\Gamma^+ \subset  \gr \Bd$.
The next theorem can be verified by repeating the  proof of \cite[Thm. 3.1]{BK}.

\begin{thm}\label{thm42}
The set
$\Gamma^- $ (resp. $\Gamma^+$) generates   $\gr  \B$ (resp. $\gr  \Bd$). Therefore, the set $\Gamma_C$ generates the algebra $\gr  \DB$.
\end{thm}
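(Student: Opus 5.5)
We will compute the images $\bar b_{ij}^{(\pm r)}$ explicitly inside $\gr\dygextfin\cong\U(\glmnht)$ and show that the algebra generated by $\Gamma^-$ (resp. $\Gamma^+$) contains the image of every generator $b_{ij}^{(r)}$ (resp. $b_{ij}^{(-r)}$). Since $\B$ is generated by the $b_{ij}^{(r)}$, this gives $\gr\B$ generated by $\Gamma^-$; the same holds for $\Gamma^+$ and $\gr\Bd$. The last claim then follows because $\DB$ is generated by $C$ together with $\B$ and $\Bd$. Throughout we follow the structure of the proof of \cite[Thm. 3.1]{BK}. The only new ingredient is the shift $C/2$ in $B^-(u)=T^-(u+C/2)\,G\,T^-(-u+C/2)^{-1}$.

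\emph{Step 1: leading terms.} Expand $T^-(u+C/2)$ and $T^-(-u+C/2)^{-1}$ in $u^{-1}$. The shift by $C/2$ only produces terms $C^k t^{(s)}_{ij}$ with $s<r$, whose degree is strictly smaller than $r-1$. Hence, modulo $\dyg^{(r-2)}$,
$$
b_{ij}^{(r)}\equiv t_{ij}^{(r)}\varepsilon_j-(-1)^r\varepsilon_i\, t_{ij}^{(r)}+(\text{products of } t^{(s)}\text{'s with lower total degree}),
$$
so that
$$
\bar b_{ij}^{(r)}=(-1)^{\bar i}\bigl(\varepsilon_j-(-1)^r\varepsilon_i\bigr)\,e_{ij}(r-1)
$$
whenever the coefficient is nonzero. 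The coefficient is nonzero exactly when $(i,j)\in I_0$ with $r$ odd, or $(i,j)\in I_1$ with $r$ even, which is precisely $\Gamma^-$. In the other cases the leading part cancels, and $b_{ij}^{(r)}$ lies in lower filtration degree.

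The same computation for $B^+(u)=T^+(u)\,G\,T^+(-u)^{-1}$ gives $\bar b_{ij}^{(-r)}\propto e_{ij}(-r)$ exactly when the pair lies in $\Gamma^+$. Here the parities are reversed, because $t^+$ carries $u^{r-1}$.

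\emph{Step 2: the degenerate cases.} This is the main obstacle. When the leading part cancels, $b_{ij}^{(r)}$ equals, modulo lower degree, a sum of quadratic and higher products of the $t$'s. We must show that these products are expressible through elements of $\Gamma^-$. The tool is the unitarity relation \eqref{reflect4}, $B^-(u)B^-(-u)=1$, applied degree by degree. It expresses $b_{ij}^{(r)}$ for the "wrong" parity of $r$ as $\tfrac12\sum_{s=1}^{r-1} (\pm) \sum_k b_{ik}^{(s)} b_{kj}^{(r-s)}$ plus lower terms; the factor $\tfrac12$ comes from $G^2=1$ and the $(-1)^r$ sign. We then argue by induction on $r$. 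Each such $b_{ij}^{(r)}$ lies in the subalgebra generated by the $b$'s of smaller index. By the induction hypothesis, the images of those $b$'s lie in the algebra generated by $\Gamma^-$. Taking images in $\gr$, every $\bar b_{ij}^{(r)}$, even when it lands in a lower graded component, belongs to the subalgebra generated by $\Gamma^-$.

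The filtration is the concern here. One must check that the relation, read in $\gr$, places the image in the correct component, or else that the element already lies in the filtered subalgebra generated by elements with images in $\Gamma^-$. Since $\gr\B$ is spanned by images of monomials in the $b$'s, it suffices to prove the filtered statement: every $b_{ij}^{(r)}$ is a polynomial in the $b^{(s)}$ with $\bar b^{(s)}\in\Gamma^-$, modulo elements of lower degree handled inductively. The unitarity recursion gives exactly this.

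For $B^+$ we run the same argument with \eqref{reflect4} for $B^+$, working in the completed $\Ydext$. The relevant sums are finite in each degree, so completeness causes no issue. Finally, $\gr\DB$ is generated by $\bar C$ and the images of $\B$ and $\Bd$, which gives the generating set $\Gamma_C$.
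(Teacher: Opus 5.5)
Your leading-term computation is correct: the shift by $C/2$ only adds terms $C^k t^{(s)}_{ij}$ with $s<r$, and the surviving coefficient singles out exactly $\Gamma^-$ (resp.\ $\Gamma^+$). Your unitarity recursion is also correct, and it makes every degenerate $b^{(r)}_{ij}$ a polynomial of formal degree at most $r-2$ in the nondegenerate generators. But this only shows that $\B$ is \emph{generated} by the good $b$'s. The argument breaks at the sentence ``$\gr\B$ is spanned by images of monomials in the $b$'s''.

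The filtration on $\B$ is the one induced from $\dyg$; that is what places $\gr\DB$ inside $\gr\dygextfin\cong\U(\glmnht)$ and makes Corollary \ref{reflection_cor} meaningful. For an induced filtration, the associated graded of a subalgebra need not be generated by the symbols of a generating set. For instance, $e,f$ generate $\U(\mathfrak{sl}_2)$, yet their symbols do not generate its associated graded for the PBW filtration. Concretely, an element of $\B$ written as a polynomial of formal degree $d$ in good generators may have its degree-$d$ symbols cancel in $\U(\glmnht)$. Take a supercommutator of two good generators minus the lift of the bracket of their symbols: it lies in a lower filtration piece, and nothing in your argument stops its symbol from containing some $e_{pq}(m)$ with $\varepsilon_p\varepsilon_q\neq(-1)^m$, i.e.\ from lying outside $\U(\mathfrak{gl}_{M|N}[t]^{\sigma})$. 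The same objection applies inside your induction, since ``taking images in $\gr$'' of the quadratic unitarity expression assumes its top symbols do not cancel. It also applies to your final sentence about $\DB$.

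What is missing is strictness of the formal filtration: every element of $\B\cap\dyg^{(k)}$ must be a combination of monomials in good generators of formal degree at most $k$. This needs the reflection relation \eqref{reflect12}, which you never use. From \eqref{reflect12} one gets that the supercommutator of two good generators equals, modulo polynomials of strictly smaller formal degree, a linear combination of generators lifting the bracket of their symbols. In particular, the square of an odd good generator drops in formal degree. Combined with your unitarity elimination, this shows that ordered monomials in the good generators, with odd ones to power at most one, span $\B$ compatibly with formal degree. This is the content of Lemma \ref{lemabbplus}, transported through the epimorphism \eqref{bbmapY}.

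The symbols of these ordered monomials are ordered PBW monomials of $\U(\mathfrak{gl}_{M|N}[t]^{\sigma})\subset\U(\glmnht)$, hence linearly independent. So no top-degree cancellation can occur, and the symbol of any element of $\B$ is a combination of ordered monomials in $\Gamma^-$; the same holds for $\Bd$. For $\DB$ you additionally need \eqref{reflect3} to move $\Bd$-factors to the left of $\B$-factors up to lower-degree terms, as in Lemma \ref{lemauredjaj}, before the same independence argument applies.
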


Let $\sigma $
be the involutive automorphism   of $\mathfrak{gl}_{M|N}  $ given by
\begin{align*}
\sigma\colon e_{ij} \mapsto \varepsilon_i\ts \varepsilon_j \ts e_{ij},\quad\text{where } i,j=1,\ldots ,M+N.
\end{align*}
It induces the decomposition $\mathfrak{gl}_{M|N}=\mathfrak{gl}_{M|N} (-1) \oplus\mathfrak{gl}_{M|N} (1) $, where $\mathfrak{gl}_{M|N} (\pm 1) $ is the eigenspace of $\sigma$ corresponding to the eigenvalue $\pm 1$. Denote by $\mathfrak{gl}_{M|N}[t,t^{-1}]^{\sigma} $   the Lie  superalgebra of all Laurent   polynomials  
$$
 \sum_{i=-p}^q a_i \ot t^i \in  \glmn\ot\CC[t,t^{-1}] ,\quad\text{such that }p,q\in\ZZ_{\geqslant 0}\text{ and }a_{i}\in \mathfrak{gl}_{M|N}((-1)^i) .
$$
As with \cite[Cor. 3.2]{BK}, by using  the isomorphism given by \eqref{isomorphism} and Theorem \ref{thm42}, one easily deduces the following corollary.

\begin{kor}\label{reflection_cor}
The   assignments  
$$
\bar{b}_{ij}^{(r)}\mapsto (-1)^{\bar{i}}\left((-1)^{r-1}\varepsilon_i +\varepsilon_j\right)e_{ij}(r-1) \fand
\bar{b}_{ij}^{(-r)}\mapsto(-1)^{\bar{i}}((-1)^r\varepsilon_i +\varepsilon_j)e_{ij} (-r)  
$$
for $\bar{b}_{ij}^{(\pm r)}\in \Gamma $, together with  $\bar{C}\mapsto K$, define   an isomorphism of $\mathbb{Z}_2$-graded algebras
$$
\textstyle\gr  \DB\cong \U(\mathfrak{gl}_{M|N}[t,t^{-1}]^{\sigma}\oplus \CC K ).
$$
\end{kor}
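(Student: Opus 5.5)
We will compute the images of the generators $b_{ij}^{(\pm r)}$ in $\gr\dygextfin\cong\U(\glmnht)$ (completed on the negative side), and combine this with Theorem \ref{thm42} and a PBW argument.

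\textbf{Step 1: leading terms of the generators.} Expand $B^-(u)=T^-(u+C/2)\,G\,T^-(-u+C/2)^{-1}$ to first order in the generators $t_{ij}^{(r)}$. Write $T^-(u)=1+\sum_r T^{(r)}u^{-r}$. The coefficient of $u^{-r}$ in $B^-(u)$ equals
$$
T^{(r)}G+(-1)^{r-1}GT^{(r)}
$$
plus products of generators and terms involving shifts by $C/2$. All of these have degree strictly less than $r-1$: a product $t^{(a)}t^{(b)}$ with $a+b\le r$ has degree $a+b-2<r-1$, and shifts in $u$ lower the power of $u^{-1}$ attached to the generators. Reading off matrix entries with the sign conventions of $T^\pm(u)$ and \eqref{isomorphism} gives
$$
\bar b_{ij}^{(r)}=(-1)^{\bar i}\bigl((-1)^{r-1}\varepsilon_i+\varepsilon_j\bigr)e_{ij}(r-1).
$$
The analogous computation for $B^+(u)=T^+(u)GT^+(-u)^{-1}$ gives the coefficient of $u^{r-1}$, namely $(-1)^r$ from $T^+(-u)^{-1}$, and hence
$$
\bar b_{ij}^{(-r)}=(-1)^{\bar i}\bigl((-1)^r\varepsilon_i+\varepsilon_j\bigr)e_{ij}(-r).
$$
Here the filtration degree in $\Ydext$ is finite, so the leading term is well defined. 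For elements of $\Gamma$ the scalar $(-1)^{\bar i}\bigl((\pm1)\varepsilon_i+\varepsilon_j\bigr)$ equals $\pm2$, hence is nonzero. In particular $e_{ij}(s)$ with $\varepsilon_i\varepsilon_j=(-1)^s$ appears, which is exactly the condition $e_{ij}\ot t^s\in\mathfrak{gl}_{M|N}[t,t^{-1}]^\sigma$. Thus each element of $\Gamma$ maps to a nonzero multiple of a basis element of $\mathfrak{gl}_{M|N}[t,t^{-1}]^{\sigma}$, and every basis element of $\mathfrak{gl}_{M|N}[t,t^{-1}]^{\sigma}\oplus\CC K$ is hit.

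\textbf{Step 2: the isomorphism.} The inclusion $\DB\subset\dygextfin$ is filtered. It therefore induces an injective map $\gr\DB\to\gr\dygextfin$, since $\DB$ carries the induced filtration. The target is identified via \eqref{isomorphism} with (a completion of) $\U(\glmnht)$. By Theorem \ref{thm42}, $\gr\DB$ is generated by $\Gamma_C$, so its image is the subalgebra generated by the images from Step 1. This is exactly the subalgebra generated by $\mathfrak{gl}_{M|N}[t,t^{-1}]^{\sigma}\oplus\CC K$. That subspace is a Lie subsuperalgebra of $\glmnht$: $\sigma$ is an automorphism, the grading is additive, and the cocycle term lies in $\CC K$. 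By PBW, the subalgebra it generates is isomorphic to its universal enveloping algebra. Composing gives the claimed isomorphism, with the stated assignments and $\bar C\mapsto K$.

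\textbf{Main obstacle.} The main obstacle is justifying that the filtration on $\gr\DB$ is compatible, i.e.\ that $\gr\DB\to\gr\dygextfin$ is injective. This holds because $\DB$ is given the subspace filtration, and that is how $\gr\DB$ is defined in the statement.

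A second point to check is the sign bookkeeping. This includes the factors $(-1)^{\bar i\bar j+\bar j}$ and the sign from the supertransposition-free inverse. It also includes the fact that the $C/2$ shifts in $B^-$ contribute only lower-degree terms. I would check the signs on the even block first and then track parities as in \cite[Cor. 3.2]{BK}, where the level zero case is identical.
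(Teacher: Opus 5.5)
Your proposal is correct and follows the same route as the paper. You compute the leading symbols of $b_{ij}^{(\pm r)}$ via \eqref{isomorphism}, then use Theorem \ref{thm42} to identify the image of $\gr\DB\subset\gr\dygextfin$ with the subalgebra generated by $\mathfrak{gl}_{M|N}[t,t^{-1}]^{\sigma}\oplus\CC K$, and conclude by PBW; the paper just says this is ``easily deduced'' as in \cite[Cor. 3.2]{BK}, and your sign and degree checks (including that the $C/2$ shifts in $B^-(u)$ contribute only lower-degree terms) correctly supply those details.
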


\subsection{Presentation of reflection algebras} \label{subsection_32}

Let $G=(g_{ij})\in\ndo\CC^{M|N}$ be the   matrix as in \eqref{gmatricca}.
Define $\DBB$ as the $\ZZ_2$-graded  unital associative algebra generated by the elements $\mathcal{C}$ and  $\beta_{ij}^{(\pm r)}$, where $i,j=1,\ldots ,M+N$ and $r=1,2,\ldots .$   The parity of   $\beta_{ij}^{(\pm r)}$ is $\bar{i}+\bar{j}$ and the element $\mathcal{C}$ is even and central. The generators are  subject to the   defining relations which are written in terms of the matrices
$$
\mathcal{B}^{\pm }(u)=\sum_{i,j=1}^{M+N} (-1)^{\bar{i}\bar{j}+\bar{j}} e_{ij}\ot \beta^{\pm }_{ij}(u),
$$
where
$$
\beta^-_{ij}(u)=g_{ij}+\sum_{r\geqslant 1} \beta_{ij}^{(r)} u^{-r}
\Fand
\beta^+_{ij}(u)=g_{ij}-\sum_{r\geqslant 1} \beta_{ij}^{(-r)} u^{r-1}.
$$
  The defining relations consist of  three {\em reflection relations},
\begin{align}
R (u-v)\ts \mathcal{B}_1^\pm (u)\ts  R (u+v )\ts \mathcal{B}_2^\pm (v)
&= \mathcal{B}^\pm_2(v)\ts  R (u+v )\ts \mathcal{B}^\pm_1(u)\ts R (u-v), \label{reflect12gen} \\
 R (u-v +\mathcal{C})\ts \mathcal{B}^-_1 (u)\ts  R (u+v -\mathcal{C})\ts \mathcal{B}_2^+ (v)
&= \mathcal{B}_2^+ (v)\ts  R (u+v+\mathcal{C} )\ts \mathcal{B}^-_1 (u)\ts  R (u-v -\mathcal{C}),  \label{reflect3gen}
\end{align}
and two {\em unitarity relations},
\beq\label{reflect4gen}
\mathcal{B}^\pm (u)\ts \mathcal{B}^\pm (-u)=1.
\eeq

Clearly, the relations above are of the same form as   \eqref{reflect12}--\eqref{reflect4}, 
so the assignments
$$
\mathcal{C}\mapsto C\Fand
\beta_{ij}^{(\pm r)}\mapsto b_{ij}^{(\pm r)},\quad\text{where }i,j=1,\ldots ,M+N \text{ and } r\geqslant 1,$$
define the $\ZZ_2$-graded  algebra epimorphism   
\beq\label{bbmapY}
\DBB\to  \DB.
\eeq
Consider the ascending filtration over $\DBB$ defined by the degree operator 
$$
\deg\mathcal{C}=0,\quad
\deg   \beta_{ij}^{(r)}=r-1  \fand  \deg  \beta_{ij}^{(-r)}=-r, 
$$
where $i,j=1,\ldots ,M+N$ and $r=1,2,\ldots .$ 
The map   \eqref{bbmapY} is filtration-preserving and it gives rise to the epimorphism of the corresponding graded algebras
\beq\label{grbbmapY}
 \gr  \DBB\to  \gr  \DB \cong \U(\mathfrak{gl}_{M|N}[t,t^{-1}]^{\sigma}\oplus \CC K ).
\eeq  
As before, we write $\bar{\beta}_{ij}^{(r)}$  (resp. $\bar{\mathcal{C}}$) for the image  of the generator  $ \beta_{ij}^{(r)}$  (resp. $ \mathcal{C} $)
in the respective component of the corresponding graded algebra $\gr   \DBB$.

Let $\BB$ (resp. $\BBd$) be a unital subalgebra of $\DBB$ generated by   all $\beta_{ij}^{(r)}$   (resp.  $\beta_{ij}^{(-r)}$), where $i,j=1,\ldots ,M+N$ and $r=1,2,\ldots.$
Recall the sets of indices $I_0$ and $I_1$ defined  by \eqref{indices0} and \eqref{indices1}.
 Introduce the sets
\begin{align*}
&\mathcal{G}^- =\left\{\bar{\beta}_{ij}^{( 2r )}\,:\, (i,j)\in I_1\text{ and }r\geqslant 1\right\}\cup \left\{\bar{\beta}_{ij}^{(2r-1)}\,:\, (i,j)\in I_0\text{ and }r\geqslant 1\right\}  
 ,\\
&\mathcal{G}^+=\left\{\bar{\beta}_{ij}^{(-2r)}\,:\, (i,j)\in I_0\text{ and }r\geqslant 1\right\}\cup
\left\{\bar{\beta}_{ij}^{(-2r+1)}\,:\, (i,j)\in I_1\text{ and }r\geqslant 1\right\}  .
\end{align*}
Clearly, these are analogues of the sets $\Gamma^-$ and $\Gamma^+$, given by 
\eqref{gammam} and \eqref{gammap}, and they satisfy
$\mathcal{G}^-\subset\gr \BB$ and  $\mathcal{G}^+\subset\gr \BBd$. 
Let $\mathcal{G} =\mathcal{G}^-\cup\mathcal{G}^+$ and $\mathcal{G}_{\mathcal{C}} =\mathcal{G} \cup\left\{\bar{\mathcal{C}}\right\}$.
 
\begin{lem}\label{lemabbplus}
 Fix some ordering on the set  $\mathcal{G}$. 
Then any element of $\gr  \BB$ (resp. $\gr  \BBd$) can be written   as a linear combination of ordered monomials in elements of $ \mathcal{G}^- $
(resp. $ \mathcal{G}^+$) with at most power $1$ for odd generators.
\end{lem}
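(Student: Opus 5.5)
We argue directly inside $\gr\BB$ (resp. $\gr\BBd$), using only the defining relations \eqref{reflect12gen} and \eqref{reflect4gen}. The map \eqref{grbbmapY} is not used: it is an epimorphism, so it cannot help with spanning arguments in the source algebra. We treat $\gr\BB$; the case of $\gr\BBd$ is identical after replacing $u$ by the appropriate expansion in nonnegative powers and tracking the degree $-r$.

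The first step is to show that $\gr\BB$ is supercommutative. Multiply \eqref{reflect12gen} for $\mathcal{B}^-$ by $(u-v)(u+v)$ and take the leading terms with respect to the filtration. Recall that $R(u)=I-Pu^{-1}$, so every occurrence of $P$ lowers the power of $u$ or $v$. The difference
$$\beta_{ij}^{(r)}\beta_{kl}^{(s)}-(-1)^{(\bar i+\bar j)(\bar k+\bar l)}\beta_{kl}^{(s)}\beta_{ij}^{(r)}$$
is therefore expressed through terms of strictly lower total degree $r+s-3$. This is the standard computation from the Yangian PBW arguments. Hence in $\gr\BB$ the images $\bar\beta_{ij}^{(r)}$ supercommute, and in particular odd generators square to zero.

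The second step is to use the unitarity relation $\mathcal{B}^-(u)\mathcal{B}^-(-u)=1$ to eliminate the generators not lying in $\mathcal{G}^-$. Write $\beta^-_{ij}(u)=g_{ij}+\sum_r\beta_{ij}^{(r)}u^{-r}$ and extract the coefficient of $u^{-r}$. Since $G^2=1$, this coefficient reads
$$\big(1+(-1)^r\varepsilon_i\varepsilon_j\big)\beta_{ij}^{(r)}\cdot(\text{sign}) + \sum_{p+q=r,\ p,q\geqslant 1}(\pm)\,\beta_{ik}^{(p)}\beta_{kj}^{(q)}=0,$$
with the relevant $G$-entries absorbed into the sign. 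The quadratic terms have degree $r-2<r-1$, so in $\gr\BB$ we get $\big(\varepsilon_j+(-1)^r\varepsilon_i\big)\bar\beta_{ij}^{(r)}=0$. This coefficient vanishes exactly when $(-1)^{r-1}\varepsilon_i=\varepsilon_j$.
\begin{itemize}
\item For $(i,j)\in I_0$ we have $\varepsilon_i=\varepsilon_j$, so $r$ must be odd.
\item For $(i,j)\in I_1$ we have $\varepsilon_i=-\varepsilon_j$, so $r$ must be even.
\end{itemize}
Hence every $\bar\beta_{ij}^{(r)}\notin\mathcal{G}^-$ equals zero in $\gr\BB$. The main care required here is checking the super-signs and the diagonal factors $g_{kk}$ to confirm that the linear term genuinely has the coefficient $\varepsilon_j+(-1)^r\varepsilon_i$ up to a nonzero factor. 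This matches the assignment in Corollary \ref{reflection_cor}.

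Finally, $\gr\BB$ is generated by all the $\bar\beta_{ij}^{(r)}$. By the second step it is therefore generated by $\mathcal{G}^-$. By the first step it is supercommutative, so any monomial can be reordered according to the fixed ordering at the cost of a sign, and any repeated odd generator kills the monomial. This gives the claimed spanning by ordered monomials in which odd generators appear with power at most $1$.

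The main obstacle is the first step: one must verify that the supercommutator of two generators really drops degree by at least one. The extra factor $R(u+v)$ contributes terms $P(u+v)^{-1}$, and one must check that these are also of lower degree rather than producing new leading terms. I would handle this by comparing coefficients of $u^{-r}v^{-s}$ after clearing denominators, exactly as in \cite[Thm. 3.1]{BK}.
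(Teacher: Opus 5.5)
There is a genuine gap in your first step. Your second step is correct: in $\gr\BB$ the unitarity relation kills exactly the $\bar\beta_{ij}^{(r)}$ outside $\mathcal{G}^-$, and likewise for $\mathcal{G}^+$. The first step, however, is false. The lemma uses the filtration $\deg\beta_{ij}^{(r)}=r-1$, and for it $\gr\BB$ is \emph{not} supercommutative: the supercommutator of $\beta_{ij}^{(r)}$ and $\beta_{kl}^{(s)}$ does not drop to degree $r+s-3$. To see this, rewrite \eqref{reflect12gen} for $\mathcal{B}^-$, with $P=P_{12}$, as
\begin{align*}
\mathcal{B}^-_1(u)\mathcal{B}^-_2(v)-\mathcal{B}^-_2(v)\mathcal{B}^-_1(u)
={}&\frac{P\mathcal{B}^-_1(u)\mathcal{B}^-_2(v)-\mathcal{B}^-_2(v)\mathcal{B}^-_1(u)P}{u-v}
+\frac{\mathcal{B}^-_1(u)P\mathcal{B}^-_2(v)-\mathcal{B}^-_2(v)P\mathcal{B}^-_1(u)}{u+v}\\
&-\frac{P\mathcal{B}^-_1(u)P\mathcal{B}^-_2(v)-\mathcal{B}^-_2(v)P\mathcal{B}^-_1(u)P}{u^2-v^2}.
\end{align*}
Expand the denominators in nonnegative powers of $v/u$, and write $\mathcal{B}^{(a)}$ for the coefficient of $u^{-a}$ in $\mathcal{B}^-(u)$. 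Where the $P$-terms of both $R(u-v)$ and $R(u+v)$ meet the constant term $G$ of $\mathcal{B}^-_1(u)$, the coefficient of $u^{-r}v^{-s}$ acquires the linear terms
\begin{equation*}
PG_1\mathcal{B}_2^{(r+s-1)}-\mathcal{B}_2^{(r+s-1)}G_1P+(-1)^{r-1}\big(G_1P\mathcal{B}_2^{(r+s-1)}-\mathcal{B}_2^{(r+s-1)}PG_1\big).
\end{equation*}
These have degree exactly $r+s-2$; only the quadratic terms have degree at most $r+s-3$. This is the same phenomenon as $\gr\Y\cong\U(\glmn[t])$ for the Yangian.

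The map \eqref{grbbmapY} cannot help with spanning, as you say, but it refutes your claim directly. It maps $\gr\BB$ onto $\gr\B\cong\U(\glmn[t]^{\sigma})$ by Corollary \ref{reflection_cor}, and this algebra is not supercommutative once $M+N\geqslant 2$. Also note that if you clear denominators as you propose, the coefficient of $u^{-r+2}v^{-s}$ contains the supercommutator of $\mathcal{B}_1^{(r-2)}$ and $\mathcal{B}_2^{(s+2)}$ in the same top degree. So that route needs an additional induction on $r$.

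What the reflection relation does give is weaker than supercommutativity but enough. In $\gr\BB$, the supercommutator of two generators $\bar\beta_{ij}^{(r)},\bar\beta_{kl}^{(s)}$ is a linear combination of the generators $\bar\beta_{pq}^{(r+s-1)}$. In $\gr\BBd$, the supercommutator of $\bar\beta_{ij}^{(-r)},\bar\beta_{kl}^{(-s)}$ is a linear combination of the $\bar\beta_{pq}^{(-r-s)}$.

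Your final step must therefore be replaced by the usual PBW reduction, by induction on the length of monomials:
\begin{itemize}
\item transposing two adjacent out-of-order generators costs a sign plus monomials of smaller length;
\item for an odd generator $x$, $x^2=\tfrac12[x,x]$ is a linear combination of generators, so it also has smaller length;
\item any generator outside $\mathcal{G}^-$ (resp. $\mathcal{G}^+$) produced along the way vanishes by your second step.
\end{itemize}
With this correction your argument becomes the one the paper relies on, namely the proof of \cite[Lemma 3.6]{BK}, which uses only \eqref{reflect12gen} and \eqref{reflect4gen}. Compare the top-degree linear terms $\gamma_1,\delta_1$ that are tracked explicitly in the proof of Lemma \ref{lemauredjaj}.
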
 

\begin{prf}
Consider  the reflection algebra $\DB$ {\em at the level zero}, i.e., the quotient of $\DB$ over its two-sided ideal generated by the element $\mathcal{C}$.
At the level zero, this lemma coincides with \cite[Lemma 3.6]{BK}.
The lemma can be verified by repeating the proof of \cite[Lemma 3.6]{BK} verbatim. Indeed, this is due to the fact that the proof employs only  the reflection relations  \eqref{reflect12gen}  and the unitarity relations \eqref{reflect4gen}, so that it does not  depend on the element  $\mathcal{C}$.
\end{prf}

The proof of the next lemma  resembles   the proof of \cite[Lemma 3.7]{BK}.
Nonetheless, we present it in full detail, as it uses
 the remaining  reflection equation \eqref{reflect3gen}, which is slightly more complicated than its level zero counterpart employed in \cite[Lemma 3.7]{BK}.

\begin{lem}\label{lemauredjaj}
 Fix some ordering on the set $ \mathcal{G}_{\mathcal{C}}  $ such that all elements of $ \mathcal{G}^+$ precede all elements of $ \mathcal{G}^-$. Then any element of $\gr   \DBB$ can be written as a linear combination of ordered monomials in $ \mathcal{G}_{\mathcal{C}}  $ with at most power 1 for odd generators.
\end{lem}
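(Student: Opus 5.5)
The plan is to reduce the claim to Lemma \ref{lemabbplus} together with a commutation relation between the two halves. The only ingredient still missing is a relation, in $\gr \DBB$, between generators of $\mathcal{G}^-$ and of $\mathcal{G}^+$; this relation has to come from \eqref{reflect3gen}.

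First, $\gr \DBB$ is generated by $\bar{\mathcal{C}}$ together with all $\bar\beta^{(\pm r)}_{ij}$. By Lemma \ref{lemabbplus}, every $\bar\beta^{(r)}_{ij}$ lies in $\gr\BB$ and is a combination of ordered monomials in $\mathcal{G}^-$, and similarly for $\gr\BBd$ and $\mathcal{G}^+$. Since $\bar{\mathcal{C}}$ is central and even, it suffices to show the following:
\begin{equation}
\bar\beta^{(r)}_{ij}\,\bar\beta^{(-s)}_{kl}\mp\bar\beta^{(-s)}_{kl}\,\bar\beta^{(r)}_{ij}\in \gr\BBd\cdot\gr\BB\cdot \CC[\bar{\mathcal{C}}]
\label{plan-supercomm}
\end{equation}
in degree $r-1-s$. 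The sign here is the usual super sign. The right-hand side is the span of products $x y c$ with $x\in\gr\BBd$, $y\in\gr\BB$ and $c$ a polynomial in $\bar{\mathcal C}$. Granting \eqref{plan-supercomm}, a standard induction finishes the argument. Take any monomial in the generators and induct on its length and on the number of inversions, that is, on how often a $\beta^-$-type factor stands left of a $\beta^+$-type factor. Each application of \eqref{plan-supercomm} either reduces the number of inversions or produces terms of strictly lower filtered degree, which are handled at the top graded piece. The result is a combination of products $xy$ with $x\in\gr\BBd$ and $y\in\gr\BB$. Lemma \ref{lemabbplus} is then applied separately to $x$ and $y$; it yields ordered monomials with power at most $1$ for odd generators, placing $\mathcal{G}^+$ before $\mathcal{G}^-$.

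To obtain \eqref{plan-supercomm}, multiply \eqref{reflect3gen} on the left by $R(u-v+\mathcal C)^{-1}$ and move the $R$-matrices across, so that it takes the form
\begin{equation*}
\mathcal{B}^-_1(u)\,R(u+v-\mathcal C)\,\mathcal{B}^+_2(v)=R(u-v+\mathcal C)^{-1}\,\mathcal{B}^+_2(v)\,R(u+v+\mathcal C)\,\mathcal{B}^-_1(u)\,R(u-v-\mathcal C).
\end{equation*}
We then expand in $u^{-1}$ and in nonnegative powers of $v$. This expansion is valid because $R(u\pm v\pm\mathcal C)=I-P(u\pm v\pm \mathcal C)^{-1}$ is expanded in $u^{-1}$. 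Next we take matrix entries and compare the coefficients of $u^{-r}v^{s-1}$. On the left we obtain $\beta^{(r)}_{ij}\beta^{(-s)}_{kl}$ plus terms coming from the $P$-parts of the $R$-matrices. On the right we obtain $\pm\beta^{(-s)}_{kl}\beta^{(r)}_{ij}$ plus similar correction terms. Every correction term carries an extra factor $(u\pm v\pm\mathcal C)^{-1}$. Each such factor lowers the total degree, in the filtration where $\deg \beta^{(r)}=r-1$ and $\deg\beta^{(-s)}=-s$, by at least one relative to the leading product, or else yields a single $\beta$ times a polynomial in $\mathcal C$. The right-hand side still has the wrong order, since $\beta^+$ is on the left. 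However, all of its correction terms either have lower degree or are linear in the generators, and the lower-degree terms can be reordered by induction on degree.

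The main obstacle is the bookkeeping in this coefficient extraction. The $R$-matrices are inverted and shifted by $\mathcal C$, and one must check that the shifts only introduce polynomial coefficients in $\mathcal C$, which is of degree $0$, and never terms of degree equal to $r-1-s$ that are bilinear in the wrong order. To handle this, I would follow the proof of \cite[Lemma 3.7]{BK} term by term, using $\mathcal C$-independence of the leading parts. The expansion of $(u+v\pm\mathcal C)^{-k}$ contributes $\mathcal C^m u^{-k-m}$ with binomial coefficients, so the extra terms multiply lower-degree pieces by $\bar{\mathcal C}^m$. Everything therefore lands in the span on the right of \eqref{plan-supercomm}.
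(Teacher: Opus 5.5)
Your route is the paper's: extract the coefficient of $u^{-r}v^{s-1}$ at $e_{ij}\otimes e_{kl}$ from \eqref{reflect3gen}, use it to move $\beta^-$-type factors to the right of $\beta^+$-type factors, and finish with Lemma \ref{lemabbplus}. The reduction step, however, does not close as you formulated it. In $\gr\DBB$ the supercommutator $\bar\beta^{(r)}_{ij}\bar\beta^{(-s)}_{kl}\mp\bar\beta^{(-s)}_{kl}\bar\beta^{(r)}_{ij}$ lies in the same degree-$(r-1-s)$ component as the monomial you started with, and terms of strictly lower filtered degree have zero image there. So termination cannot come from degree. An induction on degree in $\DBB$ would not be well founded either, since $\deg\beta^{(-s)}_{kl}=-s$ is unbounded below. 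Suppose all you know is that the supercommutator lies in $\CC[\bar{\mathcal C}]\cdot\gr\BBd\cdot\gr\BB$. Then inserting an element $x\ts y$ of this span between $\bar\mu_1$ and $\bar\mu_2$ can increase both the length and the number of inversions, because the $\beta^-$-type factors of $\bar\mu_1$ now stand left of all factors of $x$. Neither of your induction parameters need decrease. What is needed is the sharper statement the paper extracts in \eqref{imgaeof2} and \eqref{nuredjenimnomii}: in degree $r-1-s$ the supercommutator equals $\bar\delta_1-\bar\gamma_1$, a $\CC[\bar{\mathcal C}]$-linear combination of $1$ and \emph{single} generators, because the bilinear corrections $\gamma_2,\delta_2$ have degree at most $r-s-2$. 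The correction monomials are then strictly shorter, and you induct on length first and inversions second. Your last paragraph essentially asserts this (``linear in the generators''), so this, not the weaker membership statement, is what you should reduce to.

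Second, your degree count relies on every correction term carrying a factor $(u\pm v\pm\mathcal C)^{-1}$, which fails for the terms involving $G$. For $s=1$ the coefficient of $u^{-r}v^{0}$ contains $\beta^{(r)}_{ij}g_{kl}$ from $\mathcal B^-_1(u)\ts G_2$ on the left and $g_{kl}\beta^{(r)}_{ij}$ from $G_2\ts\mathcal B^-_1(u)$ on the right; these have degree $r-1>r-1-s$. For $r=s$ the pure $G_1G_2$ terms with a single $P$ give nonzero constants, of degree $0>r-1-s$. These terms are not of lower degree; they disappear only by cancelling between the two sides. The first pair cancels because $G$ has complex entries, so $\mathcal B^-_1(u)\ts G_2=G_2\ts\mathcal B^-_1(u)$. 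The second cancels because $G$ satisfies $R(u)\ts G_1\ts R(v)\ts G_2=G_2\ts R(v)\ts G_1\ts R(u)$. Hence the total pure-$G$ contribution vanishes at $\mathcal C=0$, so it is divisible by $\mathcal C$, and each power of $\mathcal C$ costs an extra power of $u^{-1}$. With these two points repaired, your argument coincides with the paper's. Note also that $\beta^+$ standing on the left is the desired order, not the wrong one.
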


\begin{prf}
It  suffices to prove that any element  $z\in\gr_2 \DBB$ can be written as a linear combination of   elements of the form $w  x  y$, where $w\in\CC[\bar{\mathcal{C}}]$, $x\in\gr_2\BBd$ and $y\in\gr_2\BB$. Indeed, the      lemma then   follows by applying Lemma \ref{lemabbplus} to $x$ and $y$. 
Moreover,   Lemma  \ref{lemabbplus} implies that the set $ \mathcal{G}_{\mathcal{C}}$ generates $\gr_2  \DBB$, so we can assume without loss of generality that $z$ is a monomial in $ \mathcal{G}_{\mathcal{C}}$. 
For simplicity, let us also assume that $\bar{\mathcal{C}}$ is the smallest element with respect to the given ordering.
We shall now   describe a procedure  which one can use to express $z$  in such a way.

Let
$$
G^\pm=\left\{\beta_{ij}^{(r)}\,:\, \bar{\beta}_{ij}^{(r)}\in\mathcal{G}^\pm\right\},\quad
G=G^- \cup G^+\fand
G_{\mathcal{C}}=G\cup\left\{\mathcal{C}\right\}.
$$
 Suppose  $\mu$ is a monomial in $ G_{\mathcal{C}}$ of the form
$$
\mu =\mathcal{C}^d  \mu_1\ts  \beta_{ij}^{(r)}\ts\beta_{kl}^{(-s)}\ts\mu_2,
$$
where $d\geqslant 0$, $i,j,k,l=1,\ldots ,M+N$, $r,s=1,2,\ldots$ and
  $\mu_1,\mu_2$ are some monomials in $ G $. As $\bar{\beta}_{kl}^{(-s)}$ precedes $\bar{\beta}_{ij}^{(r)}$ with respect to the chosen ordering, the monomial $\bar{\mu}$ is not ordered.
	
	Consider the matrix $\R(u)$ defined by $\eqref{Rbar}$. By \eqref{R} and \eqref{function_g}, we see that
	$$
\R(u)	=\left(1+\sum_{a\geqslant 1} \frac{g_a}{u^a}\right) - P\left(\frac{1}{u} +\sum_{a\geqslant 1} \frac{g_a}{u^{a+1}}\right)
=1+\sum_{a\geqslant 1}  R_a \ts u^{-a},
	$$
	where $R_a\in\ndo\CC^{M|N}\ot\ndo\CC^{M|N}$ are given by
	$$
	R_1=g_1\ts I- P
	\Fand
R_a =g_a\ts I -g_{a-1}\ts P	\quad\text{for}\quad a\geqslant 2 .
	$$
Hence, the  $R$-matrices
 which appear in the reflection equation \eqref{reflect3gen}, are of  the form
	$$
	 \R(u+ \nu_1 v+\nu_2\ts \mathcal{C} )=
	1+
	\sum_{a\geqslant 1}\frac{R_a}{u^a}\left(\sum_{b\geqslant 0}(-1)^b\frac{\left(\nu_1 v+\nu_2\ts \mathcal{C}\right)^b}{u^b}\right)^a
	\quad\text{for }\nu_1,\nu_2\in\left\{\pm 1\right\}.
	$$	
For all $i,j,k,l=1,\ldots ,M+N$ define
		$$
	\beta_{ij}^{kl}(u,v)=\left(\beta_{ij}(u)-g_{ij}\right)\left(g_{kl}-\beta_{kl}^+(v)\right).
	$$
 The degree of the coefficient of $u^{-r}v^{s-1}$, where $r,s\geqslant 2$, in   $\frac{v^m}{u^m}\beta_{ij}^{kl}(u,v)$
	(resp. $\frac{v^m}{u^n}\beta_{ij}^{kl}(u,v)$ with $n>m$)
	is less than or equal to $r-s-1$ (resp. less than or equal to $r-s-2$).  
By using these observations and extracting the coefficients of $u^{-r}v^{s-1}$ of the matrix entry $e_{ij}\ot e_{kl}$ in the reflection equation \eqref{reflect3gen}, we get
\beq\label{imgaeof2}
\beta_{ij}^{(r)}\ts\beta_{kl}^{(-s)}+\gamma_1+\gamma_2
=
\pm\beta_{kl}^{(-s)}\ts\beta_{ij}^{(r)}+\delta_1+\delta_2,
\eeq
where the sign $\pm$ on the right-hand side depends on the parity of the given generators, $\gamma_1,\delta_1$ are  linear combinations of the elements $\beta_{pq}^{(\pm t)}$ of degree less than or equal to $r-s-1$ and $\gamma_2,\delta_2$ are  linear combinations of some  monomials  in $ G_{\mathcal{C}}$ of degree less than or equal to $r-s-2$. Hence, by taking the image of \eqref{imgaeof2} in the $(r-s-1)$-component of the corresponding graded algebra, we express $\bar{\mu}$ as a linear combination
\beq\label{nuredjenimnomii}
\bar{\mu}=\pm \ts \bar{\mathcal{C}}^d\ts \bar{\mu}_1\ts \bar{\beta}_{kl}^{(-s)}\ts\bar{\beta}_{ij}^{(r)}\ts \bar{\mu}_2
+\ts \bar{\mathcal{C}}^d\ts\bar{\mu}_1\ts \bar{\delta}_1\ts \bar{\mu}_2
-\ts \bar{\mathcal{C}}^d\ts\bar{\mu}_1\ts \bar{\gamma_1}\ts \bar{\mu}_2.
\eeq
Observe that the lengths of the monomials which appear in the linear combinations  $  \bar{\mathcal{C}}^d\ts\bar{\mu}_1\ts \bar{\delta}_1\ts \bar{\mu}_2$ and $ \bar{\mathcal{C}}^d\ts\bar{\mu}_1\ts \bar{\gamma_1}\ts \bar{\mu}_2$
are strictly less than the length of $\bar{\mu}$.
Therefore, we can continue to apply such a procedure, now
starting with the monomials on the right-hand side of \eqref{nuredjenimnomii}, until, after finitely many
steps, we obtain a linear combination of   the elements   of the form $wxy$ with $w\in\CC[\bar{\mathcal{C}}]$, $x\in\gr_2\BBd$ and $y\in\gr_2\BB$, as required.
\end{prf}

Finally, the following theorem gives a presentation of reflection algebras. It can be proved by a usual argument which relies on Corollary \ref{reflection_cor}, Lemma \ref{lemauredjaj} and 
 the Poincar\'{e}--Birkhoff--Witt Theorem for the universal enveloping algebra $ \U(\mathfrak{gl}_{M|N}[t,t^{-1}]^{\sigma} \oplus\CC K)$; for more details see the proof of \cite[Thm. 3.8]{BK}.

\begin{thm}
The map \eqref{bbmapY} is a $\ZZ_2$-graded  algebra isomorphism.
\end{thm}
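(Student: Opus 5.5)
Since \eqref{bbmapY} is already a surjective homomorphism of $\ZZ_2$-graded algebras, it only remains to prove injectivity. We will do this on associated graded algebras. The map \eqref{bbmapY} preserves filtrations, with $\deg\mathcal{C}=\deg C=0$ and $\deg\beta_{ij}^{(\pm r)}=\deg b_{ij}^{(\pm r)}$. Both filtrations are exhaustive. The filtration on $\DBB$ is also separated: in each $\ZZ$-degree there are only finitely many monomials in the generators with bounded structure, so we can argue degree by degree. Under these conditions a filtered map whose associated graded map is injective is itself injective. Indeed, if $x\neq 0$ has degree exactly $p$, then its image in $\gr_p$ is nonzero, so its image under the map is nonzero. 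Therefore it suffices to show that the graded epimorphism \eqref{grbbmapY}, $\gr\DBB\to\gr\DB\cong\U(\mathfrak{gl}_{M|N}[t,t^{-1}]^{\sigma}\oplus\CC K)$, is injective.

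For this we use a spanning-set versus basis argument. Fix an ordering on $\mathcal{G}_{\mathcal{C}}$ as in Lemma \ref{lemauredjaj}, with $\mathcal{G}^+$ preceding $\mathcal{G}^-$. By Lemma \ref{lemauredjaj}, the ordered monomials in $\mathcal{G}_{\mathcal{C}}$, with odd generators appearing at most once, span $\gr\DBB$. The map \eqref{grbbmapY} sends $\bar{\mathcal{C}}\mapsto\bar{C}$ and $\bar\beta_{ij}^{(\pm r)}\mapsto\bar b_{ij}^{(\pm r)}$. Composing with the isomorphism of Corollary \ref{reflection_cor}, each element of $\mathcal{G}$ goes to a nonzero multiple of a basis vector. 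For example, $\bar b_{ij}^{(r)}\mapsto \pm 2\, e_{ij}(r-1)$, since on $\Gamma$ the factor $(-1)^{r-1}\varepsilon_i+\varepsilon_j$ equals $\pm2$ by the definition of $I_0,I_1$ and the parity of $r$. The images $e_{ij}(r-1)$, $e_{ij}(-r)$ and $K$ form a homogeneous basis of $\mathfrak{gl}_{M|N}[t,t^{-1}]^{\sigma}\oplus\CC K$. We should check that the parity conditions on $r$ in $\mathcal{G}^\pm$ correspond exactly to $a_i\in\mathfrak{gl}_{M|N}((-1)^i)$, and that the correspondence is bijective onto this basis.

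By the Poincaré--Birkhoff--Witt theorem for $\U(\mathfrak{gl}_{M|N}[t,t^{-1}]^{\sigma}\oplus\CC K)$, the images of our ordered monomials, with odd elements appearing at most once, are linearly independent. A spanning set that maps onto a linearly independent set must itself be linearly independent. Hence it is a basis of $\gr\DBB$, and \eqref{grbbmapY} is an isomorphism, which gives injectivity of \eqref{bbmapY}.

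The main obstacle is making the filtration argument rigorous. The filtration on $\DBB$ is indexed by $\ZZ$ and unbounded in both directions, so "degree exactly $p$" must be justified for a nonzero $x$, i.e.\ that $x$ does not lie in $\bigcap_p\DBB^{(p)}$. The first approach is to write $x$ as a finite linear combination of monomials, so that $x$ has a finite maximal degree $p$. If the image of $x$ in $\gr_p$ vanishes, then $x\in\DBB^{(p-1)}$, and we repeat. To avoid an infinite descent, we use that the ordered monomials of degree at most $p$ span $\DBB^{(p)}$, which follows by lifting Lemma \ref{lemauredjaj} inductively. We then show that the lifts of these monomials are linearly independent in $\DBB$, because their images in $\DB$ have independent leading terms. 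This identifies a basis of $\DBB$ mapping to a linearly independent set in $\DB$, which yields injectivity directly.
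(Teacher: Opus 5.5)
Your main line is the paper's ``usual argument''. Lemma~\ref{lemauredjaj} gives ordered monomials spanning $\gr\DBB$. Corollary~\ref{reflection_cor} sends $\mathcal{G}_{\mathcal{C}}$ to nonzero multiples of a basis of $\mathfrak{gl}_{M|N}[t,t^{-1}]^{\sigma}\oplus\CC K$; your $\pm 2$ check and the parity matching are right. PBW then makes \eqref{grbbmapY} an isomorphism.

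The gap is the passage from \eqref{grbbmapY} back to \eqref{bbmapY}. You flag it as the main obstacle but do not close it. Once \eqref{grbbmapY} is injective, any $x$ in the kernel of \eqref{bbmapY} with $x\in\DBB^{(p)}$ also lies in $\DBB^{(p-1)}$, then in $\DBB^{(p-2)}$, and so on. So the kernel is contained in $\bigcap_p\DBB^{(p)}$, and what must be shown is exactly that this intersection is zero. This is not automatic, because $\deg\beta_{ij}^{(-r)}=-r$ is unbounded below.

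Neither of your justifications establishes it.
\begin{itemize}
\item The claim that each degree contains only finitely many monomials is false, already for monomials of length two: $\beta_{ij}^{(r+1)}\beta_{kl}^{(-r)}$ has degree $0$ for every $r\geqslant 1$.
\item The claim that ordered monomials of degree at most $p$ span $\DBB^{(p)}$ ``by lifting Lemma~\ref{lemauredjaj} inductively'' is circular. Lifting means subtracting ordered monomials of top degree $p$ and passing to a remainder in $\DBB^{(p-1)}$, then $\DBB^{(p-2)}$, \dots. That is exactly the infinite descent you wanted to avoid.
\end{itemize}

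The other half of your last paragraph is fine and needs no separatedness. A nontrivial finite combination of ordered monomials with top degree $p$ has nonzero class in $\gr_p\DB$, hence nonzero image. So the missing ingredient is that $\DBB$ itself is spanned by \emph{finite} combinations of ordered monomials. That requires a rewriting procedure inside $\DBB$ with a well-founded measure. Lemma~\ref{lemauredjaj} concerns $\gr\DBB$ only and does not supply one.

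Part of such a procedure is straightforward.
\begin{itemize}
\item Read \eqref{imgaeof2} in $\DBB$ rather than in $\gr\DBB$. Its unordered quadratic corrections, which come from the left-hand side of \eqref{reflect3gen}, have strictly smaller positive superscript $r'<r$.
\item Inside $\BB$ all degrees are nonnegative, so lifting Lemma~\ref{lemabbplus} terminates there.
\end{itemize}
However, reordering inside $\BBd$ via \eqref{reflect12gen} and \eqref{reflect4gen} produces corrections of ever lower degree. Nothing in your argument shows that this process terminates.

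The paper does not spell this step out either; it defers to the level-zero argument of the earlier paper. So your route coincides with the paper's, but the specific justification you give for the final step does not hold as written.
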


\section{Vacuum module as an \texorpdfstring{$h$}{h}-adic quantum vertex superalgebra}\label{section_4}
In this section, we equip the vacuum module  $V^c(\glmn)$ over the double Yangian by the structure of $h$-adic quantum vertex superalgebra and we establish a partial connection between $V^c(\glmn)$-modules and restricted $\dyg$-modules of level $c\in\CC$.

From now on, we consider the double Yangian $\dyg$ over the commutative ring $\CC[[h]]$ of all formal power series in parameter $h$ with complex coefficients.
The definition of the double Yangian from Section \ref{section_2} is translated to the $h$-adic setting by formal rescaling $u\mapsto u/h$ of the variables and generators
$$
t_{ij}^{(r)}\mapsto h^{r-1}\ts t_{ij}^{(r)},\qquad
t_{ij}^{(-r)}\mapsto h^{-r }\ts t_{ij}^{(-r)}\Fand
C\mapsto C
$$
for all $r=1,2,\ldots .$ The $R$-matrices  \eqref{R} and \eqref{Rbar} are now given by
\beq\label{hadicrmatrix}
R(u)=I-Phu^{-1}\Fand 
\R(u)=g(u/h)\left(I-Phu^{-1}\right) 
\eeq
and they exhibit the same properties as their complex field counterparts  from Section \ref{section_2}. In particular, the crossing symmetry properties \eqref{csym} are given by
\beq\label{hadic_csym}
\left(\R(u-(M-N)h)^{-1}\right)^{\tau_i}\ts \R(u )^{\tau_i}=I\qquad\text{for }i=1,2.
\eeq

The
{\em double Yangian $\dyg$ for $\mathfrak{gl}_{M|N}$} is defined as the $\ZZ_2$-graded unital associative algebra over $\CC[[h]]$ generated by the elements $C$ and $t_{ij}^{(\pm r)}$  with $  i,j=1,\ldots , M+N$ and $r=1,2,\ldots,$ 
where $C$ is   even and central   and  the parity of
  $t_{ij}^{(\pm r)}$  is  $\bar{i}+\bar{j}$.
The generators are
subject to the  defining relations which are written in terms of the  matrices 
\beq\label{hadic_matrices}
T^\pm (u)=\sum_{i,j=1}^{M+N} (-1)^{\bar{i}\bar{j}+\bar{j}} e_{ij}\ot t^\pm_{ij}(u),
\eeq
 where
$$
t_{ij}^-(u)=\delta_{ij}+h\sum_{r\geqslant 1} t_{ij}^{(r)} u^{-r}
\Fand 
t_{ij}^+ (u)=\delta_{ij}-h\sum_{r\geqslant 1} t_{ij}^{(-r)} u^{r-1}.
$$
The relations  are given by  
\begin{align}
R_{12}(u-v)\ts T_1^\pm (u)\ts T_2^\pm (v)
&= T^\pm_2(v)\ts T^\pm_1(u)\ts R_{12}(u-v), \label{hadic_rtt12}\\
\R_{12}(u-v+hC/2)\ts T^-_1 (u)\ts T_2^+ (v)
&= T_2^+ (v)\ts T^-_1 (u)\ts \R_{12}(u-v-hC/2).\label{hadic_rtt3} 
\end{align}
 We assume that the double Yangian is completed with respect to the $h$-adic topology.

As in Section \ref{section_2},
the {\em Yangian $\Y$} (resp. the {\em dual Yangian $\Yd$}) is  defined as the subalgebra of the double Yangian   generated by all elements $t_{ij}^{(r)}$ (resp. $t_{ij}^{(-r)}$) with $r=1,2,\ldots .$
Finally, the {\em vacuum module $V^c(\glmn)$ of level $c\in\CC$} is defined as the quotient of the double Yangian $\dyg$ by its $h$-adically completed left ideal generated by $C-c\cdot 1$ and all elements $t_{ij}^{(r)}$ with   $r=1,2,\ldots .$ As a $\CC[[h]]$-module,   $V^c(\glmn)$ is isomorphic to the $h$-adic  completion of the dual Yangian $\Yd$. Thus, the dual Yangian induces the $\ZZ_2$-grading on the vacuum module, 
\beq\label{ztwograding}
V^c(\glmn) = V^c(\glmn)_{\bar{0}}\oplus V^c(\glmn)_{\bar{1}}  .
\eeq

Suppose $W$ is a $\CC[[h]]$-module. Denote by  $W((z))_h$ (resp. $W[z]_h$)  the $h$-adic completion of $W((z)) $ (resp. $W[z]$). Hence, if $W$ is topologically free, i.e. $W=W^\prime[[h]]$ for some complex vector space $W^\prime$, we have $W((z))_h=W^\prime ((z))[[h]]$ and $W[z]_h= W^\prime[z][[h]]$.
For more information on the $h$-adic topology and topologically free modules see the book by Kassel \cite{Kas}.

Let $W$ be
 a module over the superalgebra  $\dyg$. Then $W$ is said to be a {\em restricted module}, if 
it is topologically free as a $\CC[[h]]$-module and  
the action $T^-(u)_W$ of the matrix $T^-(u) $ belongs to $\ndo\CC^{M|N}\ot \om(W,W[u^{-1}]_h)$. 
Furthermore, if the canonical central element  $C\in\dyg$ acts on $W$ as a   scalar multiplication by $c\in\CC$, then $W$ is said to be of {\em level $c$}.
For example, the vacuum module $V^c(\glmn)$ is a restricted 
 $\dyg$-module of level $c$. 

Let $\vac $ be the image of the unit in the double Yangian under the canonical projection  $\dyg \to V^c(\glmn)$. From now on, we consider the matrices $T^\pm(u)$ as operators on the vacuum module, i.e., as elements of $\ndo\CC^{M|N}\ot\ndo V^c(\glmn)[[u^{\pm 1}]]$.
To declutter the notation, for the   variables $v=(v_1,\ldots ,v_n)$ and the single variable $z$, we write
$$
T_{[n]}^{\pm}(v)=T_1^{\pm}(v_1)\ldots T_n^{\pm}(v_n)\Fand
T_{[n]}^{\pm}( v|z)=T_1^{\pm}(z+v_1)\ldots T_n^{\pm}(z+v_n).
$$
Note that the coefficients of the above expressions with respect to the variables $v_1,\ldots ,v_n$ and $z$ belong to
$(\ndo\CC^{M|N})^{\ot n}\ot\ndo V^c(\glmn)$.
By using the defining relation \eqref{hadic_rtt3}, we find that the action of $T^-(u)$ on the vacuum module is given by
$$
T^-_0(u)\ts T_{[n]}^{+}(v)\vac
=\R_{01}^{- }\ldots \R_{0n}^{- }\ts 
T_{[n]}^{+}(v)\vac\ts 
\R_{0n}^{+}\ldots \R_{01}^{+},
$$
where
$
\R_{0i}^{- }=\R_{0i}(u-v_i+hc/2)^{-1 }$ and
$\R_{0i}^{+}=\R_{0i}(u-v_i-hc/2)$.

From now on, the tensor products are understood as $h$-adically completed.
The following definition is a  supersymmetric counterpart of the
notion of $h$-adic quantum vertex algebra \cite[Def. 2.20]{Li} which is in turn a slight generalization of the notion of quantum VOA \cite[Subsect. 1.4.1]{EK}.

\begin{defn}\label{qvoa}
An {\em $h$-adic quantum vertex superalgebra} is a quadruple $(V,Y,\vac,\Sc)$ as follows.
\begin{enumerate} 
\item  $V=V_{\bar{0}}\oplus V_{\bar{1}}$ is a $\ZZ_2$-graded topologically free $\mathbb{C}[[h]]$-module.
\item $Y=Y(\cdot, z)$ is the {\em vertex operator map}, a $\mathbb{C}[[h]]$-module map
\begin{align*}
Y(\cdot, z) \colon V &\to \om(V,V((z))_h)\\
u &\mapsto  Y(u,z) =\sum_{r\in\mathbb{Z}} u_r \ts z^{-r-1}
\end{align*}
such that
\beq\label{super_01}
Y(u,z)v\in V_{i+j}[[z^{\pm 1}]]\quad\text{for all }i,j\in\ZZ_2,\,u\in V_i,\, v\in V_j, 
\eeq
which satisfies the {\em weak associativity}:
for any $u,v,w\in V$ and $p\in\mathbb{Z}_{\geqslant 0}$
there exists $q\in\mathbb{Z}_{\geqslant 0}$
such that
$$
(z_0 +z_2)^q\ts Y(u,z_0 +z_2)Y(v,z_2)\ts w - (z_0 +z_2)^q\ts Y\big(Y(u,z_0)v,z_2\big)\ts w
\in h^p V[[z_0^{\pm 1},z_2^{\pm 1}]].
$$
\item $\vac$ is the {\em vacuum vector}, a distinct element of $V_{\bar{0}}$  such that
$$
Y(\vac ,z)v=v\quad\text{for all }v\in V,
$$
which satisfies 
$$
Y(v,z) \vac\in V[[z]]\fand
\lim_{z\to 0} Y(v,z)\ts\vac =v\qquad\text{for all }v\in V.
$$
\item $\Sc=\Sc(z)$ is the {\em braiding}, a $\mathbb{C}[[h]]$-module map
$$\Sc(z)\colon V\otimes V\to V\otimes V\otimes\mathbb{C}((z))[[h]]$$
such that
\begin{align}
&\Sc(z)\left((V\ot V)_{i}\right)\subset  (V\ot V)_{i} \otimes\mathbb{C}((z))[[h]]\quad\text{for all }i\in\ZZ_2,\label{super_02}\\
\intertext{where
$(V\ot V)_{i}=\oplus_{a+b=i} V_a\ot V_b $,
which satisfies the {\em shift condition}
}
&[\Dc\otimes 1, \mathcal{S}(z)]=-\frac{d}{dz}\mathcal{S}(z)\quad \text{for}\quad \Dc\in\ndo V\text{   defined by }\Dc v=v_{-2}\vac,\non\\
\intertext{the {\em quantum Yang--Baxter equation}}
&\mathcal{S}_{12}(z_1)\ts\mathcal{S}_{13}(z_1+z_2)\ts\mathcal{S}_{23}(z_2)
=\mathcal{S}_{23}(z_2)\ts\mathcal{S}_{13}(z_1+z_2)\ts\mathcal{S}_{12}(z_1),\non\\
\intertext{the {\em unitarity condition}}
&\mathcal{S}(z)^{-1}=\mathcal{S}_{21}(-z),\non
\end{align}
the {\em $\mathcal{S}$-locality}:
for any $i,j\in\ZZ_2$, $u \in V_i$, $v\in V_j$ and $p\in\mathbb{Z}_{\geqslant 0}$ there exists
$q\in\mathbb{Z}_{\geqslant 0}$ such that for all $w\in V$ we have
\begin{align}
&(z_1-z_2)^{q}\ts Y(z_1)\big(1\otimes Y(z_2)\big)\big(\mathcal{S}(z_1 -z_2)(u\otimes v)\otimes w\big)
\nonumber\\
&\quad-(-1)^{ij}(z_1-z_2)^{q}\ts Y(z_2)\big(1\otimes Y(z_1)\big)(v\otimes u\otimes w)
\in h^p V[[z_1^{\pm 1},z_2^{\pm 1}]] ,\label{locality}
\intertext{and the {\em hexagon identity}}
&\Sc(z_1)\left(Y(z_2)\ot 1\right) =\left(Y(z_2)\ot 1\right)\Sc_{23}(z_1)\Sc_{13}(z_1+z_2).\non
\end{align}
\end{enumerate}
\end{defn}

Next, we consider the notion of module over $h$-adic quantum vertex superalgebra, which is a   supersymmetric counterpart of   \cite[Def. 2.23]{Li}.

\begin{defn}\label{qvoamodule}
 Let $(V,Y,\vac ,\Sc)$  be an $h$-adic quantum vertex superalgebra. A {\em   $V$-module} is a pair $(W,Y_W)$, where $W=W_{\bar{0}}\oplus W_{\bar{1}}$ is a  $\ZZ_2$-graded topologically free $\CC[[h]]$-module
and 
\begin{align*}
Y_W = Y_W(\cdot ,z)\colon V &\to\om(W, W((z))_h)\\
u &\mapsto Y_W(u,z) =\sum_{r\in\mathbb{Z}} u_r  \ts z^{-r-1}
\end{align*}
is a $\CC[[h]]$-module map such that
\beq\label{super_03}
Y_W(u,z)w\in W_{i+j}[[z^{\pm 1}]]\quad\text{for all }i,j\in\ZZ_2,\,u\in V_i,\, w\in W_j ,
\eeq
which satisfies  
 $$Y_W(\vac,z)w=w\quad\text{for all } w\in W$$
 and
the {\em weak associativity}:
for any $u,v\in V$, $w\in W$ and $p\in\mathbb{Z}_{\geqslant 0}$
there exists $q\in\mathbb{Z}_{\geqslant 0}$
such that
\begin{align*}
&(z_0 +z_2)^q\ts Y_W (u,z_0 +z_2)Y_W(v,z_2)\ts w   - (z_0 +z_2)^q\ts Y_W\big(Y(u,z_0)v,z_2\big)\ts w
\in h^p W[[z_0^{\pm 1},z_2^{\pm 1}]]. 
\end{align*}

We define the notion of {\em quasi $V$-module} by replacing the weak associativity with the following requirement: for any $u,v\in V$, $w\in W$ and $p\in\mathbb{Z}_{\geqslant 0}$
there exists a nonzero polynomial  $q(z_1,z_2)\in\CC[z_1,z_2]$
such that
\begin{align*}
&q(z_0 +z_2,z_2) \ts Y_W (u,z_0 +z_2)Y_W(v,z_2)\ts w   - q(z_0 +z_2,z_2) \ts Y_W\big(Y(u,z_0)v,z_2\big)\ts w
\end{align*}
belongs to  $h^p W[[z_0^{\pm 1},z_2^{\pm 1}]]$. 
\end{defn}

In the next theorem, we apply the Etingof--Kazhdan construction \cite[Thm. 2.3]{EK} to the vacuum module.  
To present the explicit expression for the braiding, we shall need the following notation.
Let   $u=(u_1,\ldots ,u_n)$, $v=(v_1,\ldots, v_m)$ be   families of variables and $z$ a single variable.
Denote by the superscripts $1,2,3,4$ the   tensor factors as follows:
\beq\label{nhk78}
\overbrace{(\ndo\mathbb{C}^{M|N})^{\ot  n}}^{1} \ot 
\overbrace{(\ndo\mathbb{C}^{M|N})^{\ot  m}}^{2}\ot
\overbrace{V^c(\glmn)}^{3} \ot \overbrace{V^c(\glmn)}^{4}.
\eeq
Define the  formal power series   with coefficients in
$ (\ndo\mathbb{C}^{M|N})^{\ot  n}   \ot 
 (\ndo\mathbb{C}^{M|N})^{\ot  m} $,
\begin{align}
&\R_{nm}^{12}(u|v|z+ah)= \prod_{i=1,\dots,n}^{\longrightarrow}
\prod_{j= 1,\ldots, m}^{\longleftarrow} \R_{i\ts n+j}(z+u_i -v_{j}+ah) ,\label{RnM1}\\
&\wndr{\R}_{nm}^{12}(u|v|z+ah)= \prod_{i=1,\dots,n}^{\longrightarrow}
\prod_{j= 1,\ldots, m}^{\longrightarrow} \R_{i\ts n+j}(z+u_i +v_{j}+ah)\label{RnM2} ,
\end{align}
  where the arrows indicate the order of   factors and $a\in\CC$.  For example, 
	by setting $n=m=2$ and $a=0$ 
	in \eqref{RnM1} and \eqref{RnM2}, we get  
	$$
	\R_{nm}^{12}(u|v|z)=\R_{14}\ts\R_{13}\ts\R_{24}\ts\R_{23}\Fand
	\wndr{\R}_{nm}^{12}(u|v|z)=\wndr{\R}_{13}\ts\wndr{\R}_{14}\ts\wndr{\R}_{23}\ts\wndr{\R}_{24},
	$$
		where $\R_{ij}=\R_{z+u_i-v_j}$ and $\wndr{\R}_{ij}=\R_{z+u_i+v_j}$.
Furthermore, we write
	\begin{align}
&\R_{nm}^{12}(u+ah|v )= \prod_{i=1,\dots,n}^{\longrightarrow}
\prod_{j= 1,\ldots, m}^{\longleftarrow} \R_{i\ts n+j}( u_i -v_{j}+ah) , \label{RnM3}\\
&\wndr{\R}_{nm}^{12}(u+ah|v )= \prod_{i=1,\dots,n}^{\longrightarrow}
\prod_{j= 1,\ldots, m}^{\longrightarrow} \R_{i\ts n+j}( u_i +v_{j}+ah)\label{RnM4}
\end{align}	
if the variable $z$ is omitted in in \eqref{RnM1} and \eqref{RnM2}.
		Throughout the paper, we shall apply the same notation conventions to the $R$-matrix $R(u)$ as well.

\begin{thm}\label{glavni_teorem}
\begin{enumerate}
\item
For any $c\in\CC$ there exists a unique structure of $h$-adic quantum vertex superalgebra on the vacuum module $V^c(\glmn) = V^c(\glmn)_{\bar{0}}\oplus V^c(\glmn)_{\bar{1}} $ such that  the vertex operator map is given by
\beq\label{hadic_y}
Y(T_{[n]}^+(u ) \vac ,z)=
T_{[n]}^+(u|z )\ts 
T_{[n]}^-(u|z+hc/2 )^{-1} ,
\eeq 
where $u=(u_1,\ldots ,u_n)$,
the vacuum vector is $\vac$ and the braiding   $\mathcal{S} $ satisfies the identities  on 
\eqref{nhk78} with $v=(v_1,\ldots ,v_m)$, 
\begin{align}
&\mathcal{S}^{34}(z)\Big( T_{[m]}^{+24}(v) \ts
\R	_{nm}^{  12}(u|v|z-h    c) \ts T_{[n]}^{+13}(u)\ts \R_{nm}^{  12}(u|v|z)^{-1}\ts(\vac\otimes \vac) \Big) \non\\
=& \,\R_{nm}^{  12}(u|v|z)  \ts T_{[n]}^{+13}(u)  \ts\R_{nm}^{  12}(u|v|z+h    c)^{-1} \ts
 T_{[m]}^{+24}(v) \ts(\vac\otimes \vac).\label{hadic_s}
\end{align}
\item
 Let $W$ be a restricted  module of level $c$ over the superalgebra $\dyg$. There exists a unique structure of $V^c(\glmn)$-module on $W$ such that  
$$
Y_W(T_{[n]}^+(u ) \vac ,z)=
T_{[n]}^+(u|z )_W\ts 
T_{[n]}^-(u|z+hc/2 )^{-1}_W.
$$
\end{enumerate}
\end{thm}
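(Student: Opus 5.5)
We follow the Etingof--Kazhdan construction \cite[Thm. 2.3]{EK}, adapted to the $\ZZ_2$-graded $h$-adic setting of Definition \ref{qvoa}, and we track the Koszul signs that come from the supertrace-type matrix $P$ and from the grading of $T^\pm(u)$.

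\emph{Step 1: the vertex operators.} Formula \eqref{hadic_y} defines $Y$ on all of $V^c(\glmn)$. Indeed, by the PBW theorem the coefficients of $T_{[n]}^+(u)\vac$, $n\geqslant 0$, span a dense subspace of $V^c(\glmn)\cong\Yd$. The relation \eqref{hadic_rtt12} for $T^+$ gives
\[
R^{12}_{nm}(u|v)\,T^+_{[n]}(u)\,T^+_{[m]}(v)=\ldots,
\]
and together with the PBW description this identifies the kernel of the surjection from the space of such coefficients onto $V^c(\glmn)$. The first key check is that the right-hand side of \eqref{hadic_y}, which is the product of quantum currents $\mathcal{T}(z+u_1)\cdots$ in ordered form, respects the same relations. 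This follows from \eqref{hadic_rtt12} and \eqref{hadic_rtt3}, which give the $RTT$-type exchange for the composite $T^+_{[n]}T^{-}_{[n]}{}^{-1}$.

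Restrictedness of the vacuum module shows that $Y(\cdot,z)v\in V((z))_h$. The factor $T^-(u|z+hc/2)^{-1}$ acts through $R$-matrix conjugations, as in the displayed action of $T^-$ on the vacuum module, and is therefore polynomial in $z^{-1}$ modulo $h^p$. The vacuum and creation properties follow because $T^-(\cdot)\vac=\vac$ and $Y(T^+_{[n]}(u)\vac,z)\vac=T^+_{[n]}(u|z)\vac$, which tends to $T^+_{[n]}(u)\vac$ as $z\to0$. The parity conditions \eqref{super_01} and \eqref{super_03} are immediate because the generators are homogeneous.

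\emph{Step 2: the braiding and $\Sc$-locality.} We define $\Sc(z)$ by \eqref{hadic_s}, the vectors $T^{+24}_{[m]}(v)\R\,T^{+13}_{[n]}(u)\R^{-1}(\vac\ot\vac)$ spanning a dense subspace of $V\ot V$. To see that this is well defined, we check that the relations among these vectors are preserved. This uses the Yang--Baxter equation \eqref{ybe} for $\R$ and \eqref{hadic_rtt12}. Unitarity $\R(u)\R(-u)=1$ gives $\Sc(z)^{-1}=\Sc_{21}(-z)$, and the Yang--Baxter equation for $\Sc$ reduces to repeated use of \eqref{ybe}. The shift condition holds because $\Dc$ acts as $\partial_u$ on $T^+(u)\vac$, and $\Sc$ depends only on $z+u_i-v_j$. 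The hexagon identity follows from the factorization $\R^{12}_{n+k,m}=\R^{13}\R^{23}$ together with the form of $Y$.

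$\Sc$-locality is the graded quantum current relation
\[
\R(z_1-z_2-hc)\,\mathcal{T}_1(z_1)\,\R(z_1-z_2)^{-1}\,\mathcal{T}_2(z_2)
=\mathcal{T}_2(z_2)\,\R(z_1-z_2)^{-1}\,\mathcal{T}_1(z_1)\,\R(z_1-z_2+hc),
\]
which we derive from \eqref{hadic_rtt12} and \eqref{hadic_rtt3}. Multiplying by $(z_1-z_2)^q$ clears the expansion ambiguity modulo $h^p$. The sign $(-1)^{ij}$ arises automatically from the super tensor product convention in the matrix notation.

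\emph{Step 3: weak associativity and modules.} Weak associativity, and the module statement (2), we get from Li's criterion for $h$-adic quantum vertex algebras, as used in \cite{Li}, in its super form. Concretely, for a restricted level $c$ module $W$, the set of currents $\{T^+_{[n]}(u|z)_WT^-_{[n]}(u|z+hc/2)^{-1}_W\}$ is an $\Sc$-local set of fields on $W$ modulo $h^p$. It therefore generates an $h$-adic (weak) quantum vertex superalgebra acting on $W$, and the map $V^c\to$ (this algebra) induced by $T^+_{[n]}(u)\vac\mapsto$ (the normally ordered product of currents) is a homomorphism sending $\vac$ to $1_W$. Applying this with $W=V^c$ gives the vertex algebra structure, and for general $W$ it gives the module structure. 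Uniqueness in both parts follows because the vectors $T^+_{[n]}(u)\vac$ generate.

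\emph{Main obstacle.} We expect the main difficulty to be proving that $\Sc$ is well defined and that the locality relation carries the correct super signs. The $\ZZ_2$-graded tensor products of $(\ndo\CC^{M|N})^{\ot n}$ with $V$ introduce signs in the $R$-matrix manipulations. We would first handle $n=m=1$ explicitly, and then pass to general $n$ and $m$ by induction using fusion of $R$-matrices.
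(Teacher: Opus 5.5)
Your plan differs from the paper. You want to run the Etingof--Kazhdan construction from scratch and get weak associativity and part (2) from a super version of Li's theory of $\Sc$-local fields. The paper instead invokes the even-case proofs of \cite{EK,Gar,JKMY} and verifies only the axioms that involve the $\ZZ_2$-grading. Your route could work, but as written there are two concrete problems.

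\textbf{The $\Sc$-locality relation is wrong.} The relation you state does not follow from \eqref{hadic_rtt12}, \eqref{hadic_rtt3}, and it is not the relation encoded by the prescribed braiding \eqref{hadic_s}. Put $\mathcal{T}(z)=T^+(z)\ts T^-(z+hc/2)^{-1}$ and take $n=m=1$, $u=v=0$, $z=z_1-z_2$. Apply $Y(z_1)(1\ot Y(z_2))$ to the right-hand side of \eqref{hadic_s}, and $Y(z_2)(1\ot Y(z_1))$ to the flipped left-hand side. This shows that what has to be proved is
\[
\R(z)\ts\mathcal{T}_1(z_1)\ts\R(z+hc)^{-1}\ts\mathcal{T}_2(z_2)=\mathcal{T}_2(z_2)\ts\R(z-hc)\ts\mathcal{T}_1(z_1)\ts\R(z)^{-1}.
\]
This is indeed what the defining relations give. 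Use \eqref{hadic_rtt3} to move $T^-_1(z_1+hc/2)^{-1}$ past $T^+_2(z_2)$ and $T^-_2(z_2+hc/2)^{-1}$ past $T^+_1(z_1)$. Use \eqref{hadic_rtt12} to reorder the $T^+$'s and the $T^-$'s.

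In your version the shifts $\pm hc$ sit on the outer instead of the inner $R$-matrices. Write $\mathcal{T}(z)=1+h\tau(z)+\ldots$ and expand to second order in $h$. Already for $c=0$, your relation would add a nonzero term proportional to $(z_1-z_2)^{-1}[P,\tau_1(z_1)]$ to the leading-order commutator $[\tau_1(z_1),\tau_2(z_2)]$. The correct relation above rules out such a term.

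\textbf{The super-specific content is only asserted.} This content is the entire substance of the paper's proof, and \eqref{super_02} is not addressed at all in your proposal. The missing argument is short.

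\emph{Parity of $Y$ and $\Sc$.} Every coefficient of $T^\pm(u)$ and of $\R(u)$ is even, since each $e_{ij}\ot t_{ij}^{(\pm r)}$ and each $e_{ij}\ot e_{ji}$ is even. Hence all coefficients of a fixed matrix entry $e_{i_1j_1}\ot\cdots\ot e_{k_ml_m}$, on either side of \eqref{hadic_s} and of \eqref{hentries3}, are homogeneous of the common parity $\sum_a(\bar i_a+\bar j_a)+\sum_b(\bar k_b+\bar l_b)$. This gives \eqref{super_01} and \eqref{super_02}.

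\emph{The sign in locality.} The sign $(-1)^{ij}$ in \eqref{locality} appears because $T^+_{[n]}(u)$ and $T^+_{[m]}(v)$ act on the vacuum factors $3,4$ in \eqref{hadiceexp1} but on $4,3$ in \eqref{hadiceexp2}. Extracting the same matrix entry therefore produces exactly this Koszul sign, with $i$ and $j$ the parities of the corresponding coefficients of $T^+_{[n]}(u)\vac$ and $T^+_{[m]}(v)\vac$. This argument is uniform in $n,m$, so no induction via fusion is needed.

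\emph{Density.} Because of the factor $h$ in $t^+_{ij}(u)$, the $\CC[[h]]$-span of the coefficients of the $T^+_{[n]}(u)\vac$ is not $h$-adically dense in $V^c(\glmn)$. You have to include every $v$ with $h^kv$ in that span, as the paper does with $[V]$.
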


\begin{prf} (1) The even case of this assertion, i.e. the construction of  $h$-adic quantum vertex algebra structure on the vacuum module $V^c(\gl_M)$ over the double Yangian ${\rm DY}(\gl_M)$,   goes back to Etingof and Kazhdan \cite[Thm. 2.3]{EK}. It is proved by straightforward computations which employ the usual $R$-matrix techniques to   verify the requirements imposed by the definition of  $h$-adic quantum vertex algebra, as demonstrated by the proofs of   \cite[Thm. 2.3.8]{Gar} and \cite[Thm. 4.1]{JKMY}. These proofs directly extend to the super case, due to the same form of the vertex operator map and the braiding. Therefore, we shall only     verify the requirements  from Definition \ref{qvoa} which are related to   $\ZZ_2$-grading, i.e., which are not addressed in the aforementioned proofs. More specifically, by comparing the notion of $h$-adic quantum vertex algebra \cite[Def. 2.23]{Li} with its supersymmetric counterpart from Definition \ref{qvoa}, we see that 
$V^c(\glmn)$ is now required to be $\ZZ_2$-graded and the vertex operator map (resp. the braiding map) needs to satisfy the additional requirement \eqref{super_01}  (resp. \eqref{super_02}). Finally, the  $\Sc$-locality axiom \eqref{locality} contains the term $(-1)^{ij}$,    depending on the parity of the corresponding homogenous elements $u,v\in V^c(\glmn)$, which equals $1$ in the even case.

Recall  that the $\CC[[h]]$-module $V^c(\glmn)$ is $\ZZ_2$-graded by \eqref{ztwograding}. Let us prove that the vertex operator map \eqref{hadic_y} and the braiding \eqref{hadic_s} satisfy the grading restrictions \eqref{super_01}  and \eqref{super_02}. 
Let $U(0)=\left\{\vac\right\}$ and let $U(n)\subset  V^c(\glmn)$ for $n\geqslant 1$ be the set of all coefficients, with respect to the variables $u_1,\ldots ,u_n$, of all matrix entries $e_{i_1\ts j_1}\ot\ldots\ot e_{i_n\ts j_n}$ of  
$T_{[n]}^+(u_1,\ldots ,u_n)\vac $. 
Note that the elements of the set $U(n)$ are homogenous with respect to the $\ZZ_2$-grading \eqref{ztwograding}, as all coefficients of the matrix entry 
$e_{i_1\ts j_1}\ot\ldots\ot e_{i_n\ts j_n}$ of  
$T_{[n]}^+(u_1,\ldots ,u_n)\vac $ are of parity $(\bar{i}_1+\ldots +\bar{i}_n)+ (\bar{j}_1+\ldots +\bar{j}_n)$.
Hence, the set $U=\cup_{n=0}^\infty U(n)$ can be written as a disjoint union
\beq\label{hadicdisjoint}
U=U_{\bar{0}}\cup U_{\bar{1}}, 
\eeq
where $U_{\bar{0}} $ (resp. $  U_{\bar{1}}$) consists of all even (resp. odd) elements of $U$.
For any subset $V\subseteq V^c(\glmn)$, define
$$
[V] =\left\{v\in V^c(\glmn)\,:\, u=h^n v\text{ for some }u\in V,\, n\geqslant 0\right\}.
$$
Let $V$ be the $\CC[[h]]$-span of  $U $. The  $\CC[[h]]$-span of $[V]$ is an $h$-adically dense $\CC[[h]]$-submodule of $V^c(\glmn)$. Therefore, to prove the grading restrictions, it suffices to show that for all $i,j\in\ZZ_2$, $u\in U_i$ and $v\in U_j$, we have
\beq\label{hadicassertion1}
Y(u,z)v\in V^c(\glmn)_{i+j}[[z^{\pm 1}]]
\eeq
and 
\beq\label{hadicassertion2}
\Sc(z)(  u\ot v) \in (V^c(\glmn)\ot V^c(\glmn))_{i+j}\ot \CC((z))[[h]].
\eeq

Regarding \eqref{hadicassertion2}, as evident from \eqref{hadicrmatrix}, all coefficients of the $R$-matrix $\R(u)$   with respect to the variable $u$ are even elements of $\ndo\CC^{M|N}\ot \ndo\CC^{M|N}[[h]]$, so  the operation of multiplication by the $R$-matrix does not change the parity. Therefore, all coefficients
of the matrix entry
\beq\label{hentries1}
e_{i_1\ts j_1}\ot\ldots\ot e_{i_n\ts j_n}\ot e_{k_1\ts l_1}\ot \ldots \ot e_{k_m\ts l_m}
\eeq
on both sides of \eqref{hadic_s} are of the same parity
\beq\label{hentries2}
(\bar{i}_1+\ldots +\bar{i}_n)+ (\bar{j}_1+\ldots +\bar{j}_n)+
(\bar{k}_1+\ldots +\bar{k}_m)+ (\bar{l}_1+\ldots +\bar{l}_m),
\eeq
which implies \eqref{hadicassertion2}.

As for the grading restriction \eqref{hadicassertion1}, it is evident from \eqref{hadic_matrices}, that
all coefficients of the matrices $T^\pm(u)$ with respect to the variable $u$ are even elements of $\ndo\CC^{M|N}\ot\ndo V^c(\glmn)$. 
Moreover, by \eqref{hadic_y}, we have
 \beq\label{hentries3}
Y(T_{[n]}^+(u ) \vac ,z)T_{[m]}^+(v )\vac=
T_{[n]}^{+13}(u|z )\ts 
T_{[n]}^{-13}(u|z+hc/2 )^{-1}\ts T_{[m]}^{+23}(v )\vac.
\eeq
Hence,   all coefficients of the matrix entry \eqref{hentries1} on both sides of
  \eqref{hentries3} are of the same parity \eqref{hentries2}, as required.

It remains to prove the $\Sc$-locality property.
It suffices to show that \eqref{locality} holds for all $i,j\in\ZZ_2$, $u \in U_i$ and  $v\in U_j$, where $U_{\bar{0}}$ and  $U_{\bar{1}}$ are the sets of even and odd elements of $U$; recall \eqref{hadicdisjoint}. As in the proof of \cite[Thm. 4.1]{JKMY}, one compares the expressions
\beq\label{hadiceexp1}
(z_1-z_2)^q\ts Y(z_1)\left(1\ot Y(z_2)\right)\Sc(z_1 -z_2) \ts T_{[m]}^{+24}(v)\ts T_{[n]}^{+13}(u)(\vac\ot\vac)
\eeq
and
\beq\label{hadiceexp2}
(z_1-z_2)^q\ts  Y(z_2)\left(1\ot Y(z_1)\right)  T_{[m]}^{+23}(v)\ts T_{[n]}^{+14}(u)(\vac\ot\vac),
\eeq
where $u=(u_1,\ldots ,u_n)$ and $v=(v_1,\ldots ,v_m)$.
More specifically, one finds that for all $p,r_1,\ldots ,r_n,s_1,\ldots ,s_m\geqslant 1$ there exists $q\geqslant 1$ such that all coefficients of 
$$
u_1^{a_1}\ldots u_n^{a_n} \ts v_1^{b_1}\ldots v_m^{b_m},\quad a_\alpha=0,\ldots ,r_\alpha,\,   b_\beta=0,\ldots ,s_\beta,\, \alpha=1,\ldots ,n,\, \beta=1,\ldots ,m,
$$
 of the matrix entries \eqref{hentries1} in \eqref{hadiceexp1} and in \eqref{hadiceexp2}  coincide modulo $h^p$. Note that   \eqref{hadiceexp1} (resp. \eqref{hadiceexp2}) contains the term $T_{[m]}^{+24}(v)\ts T_{[n]}^{+13}(u)$ (resp. $T_{[m]}^{+23}(v)\ts T_{[n]}^{+14}(u)$).
Thus, by extracting the matrix entries of  \eqref{hentries1}  in
  \eqref{hadiceexp1} and  in \eqref{hadiceexp2}, the former expression
produces the additional sign $(-1)^{i\ts j}$, where
$$
i=(\bar{i}_1+\ldots +\bar{i}_n)+ (\bar{j}_1+\ldots +\bar{j}_n)\fand 
j=(\bar{k}_1+\ldots +\bar{k}_m)+ (\bar{l}_1+\ldots +\bar{l}_m).
$$
Finally, as $i$ (resp. $j$)
equals the parity of all coefficients of the matrix entry 
$e_{i_1\ts j_1}\ot\ldots\ot e_{i_n\ts j_n}$
(resp. $e_{k_1\ts l_1}\ot \ldots \ot e_{k_m\ts l_m}$)
in $T_{[n]}^+(u_1,\ldots ,u_n)$ (resp. in $T_{[m]}^+(v_1,\ldots ,v_m)$), we conclude that the $\Sc$-locality holds.

\noindent (2) The second assertion  follows again by arguing as in the proofs of
\cite[Thm. 2.3.8]{Gar} and \cite[Thm. 4.1]{JKMY}.
The grading restriction \eqref{super_03} on the module map is an immediate consequence of the grading restriction in the definition of the notion of module over an associative superalgebra.
\end{prf}

\section{Central elements in the completed double Yangian at the critical level}\label{section_3}

For any $c\in\CC$, denote by $\dygc$ the {\em double Yangian at the level $c$}, i.e.,   the quotient of the algebra $\dyg$ over the $h$-adically two-sided  completed ideal generated by $C-c $. Next, let $I_p$ with $p\geqslant 1$ be the $h$-adically  completed left ideal of $\dygc$ generated by all $t_{ij}^{(r)}$ with $r\geqslant p$.
Define the completion of $\dygc$ as the  inverse limit
$$
\dygctld = \lim_{\longleftarrow} \dygc  / I_p.
$$
Throughout this subsection, we consider the completed double Yangian at the {\em critical level} $c=N-M$, which we denote by $\dygtld = \dygNMtld$.
Our goal  is to construct families of central elements in $\dygtld$.

Let us recall some  properties of the supertrace   $\str\colon \ndo\CC^{M|N}\to \CC$,  
$$
\str\colon e_{ij}\mapsto \delta_{ij}(-1)^{\bar{i} }\quad\text{for all }i,j=1,\ldots ,M+N. 
$$ 
First, it possesses the supercyclic property, 
\beq\label{str_cyclic}
\str A\ts B =\str B \ts A (-1)^{\bar{A}\bar{B}}\quad\text{for all homogenous }A,B\in \ndo\CC^{M|N},
\eeq
where $\bar{X}$ equals $0$ (resp. $1$) if $X$ is even (resp. odd) element. 
Next,  it respects the supertransposition $\tau$, as defined by \eqref{supertransposition}, i.e., it satisfies
\beq\label{str_transposition}
\str A = \str A^\tau\quad\text{for all }A\in \ndo\CC^{M|N}.
\eeq
Also, we remark that  the supertransposition is an antihomomorphism, so that we have
\beq\label{tau_antihom}
\left(A\ts B\right)^\tau = B^\tau   A^\tau (-1)^{\bar{A}\bar{B}}\quad\text{for all homogenous }A,B\in \ndo\CC^{M|N}.
\eeq
Finally, the supertrace satisfies the identity
\beq\label{str_id}
\str A\ts B = \str A^\tau B^\tau  \quad\text{for all }A,B\in \ndo\CC^{M|N}.
\eeq
Indeed, consider \eqref{str_id} for  $A$ and $B$   homogenous. By \eqref{str_transposition}, the left-hand side 
is equal to 
$ \str (A\ts B)^\tau$, 
and by \eqref{str_cyclic}, the right-hand side is equal to
$ \str B^\tau   A^\tau (-1)^{\bar{A}\bar{B}}  $. Hence, the identity \eqref{tau_antihom} implies that both sides coincide.

The supertrace naturally extends to the tensor product spaces, e.g., let us consider
\beq\label{tensors}
\left(\ndo\CC^{M|N}\right)^{\ot n}\ot \dygtld .
\eeq
 For any $k=1,\ldots , n$, denote by $\str_k$ the action of the supertrace on the $k$-th tensor factor. Clearly, for all $x\in \dygtld$, we have
\begin{align*}
&\str_k e_{r_1\ts s_1}\ot \ldots \ot e_{r_n\ts s_n}\ot x\\ 
= \,\,& 
\delta_{i_k\ts j_k} (-1)^{\bar{i_k}}
  e_{r_1\ts s_1}\ot \ldots 
	\ot e_{r_{k-1}\ts s_{k-1}}\ot e_{r_{k+1}\ts s_{k+1}}
\ot \ldots\ot e_{r_n\ts s_n}\ot x.
\end{align*}
Let $\str_{1,\ldots ,n} =\str_1 \ldots \str_n$, so that the supertrace is applied on the tensor factors $1,\ldots ,n $ of \eqref{tensors}. The  formula  above implies 
$$
 \str_{1,\ldots ,n} e_{r_1\ts s_1}\ot \ldots \ot e_{r_n\ts s_n}\ot x 
=  
\delta_{i_1\ts j_1}\ldots \delta_{i_n\ts j_n} (-1)^{\bar{i_1}+\ldots +\bar{i_n}}
   x \quad\text{
 for all }x\in \dygtld .$$
Furthermore, the identity \eqref{str_id} can be generalized as
\beq\label{another_prop}
\str_{1,\ldots ,n} A\ts B = \str_{1,\ldots ,n} A^{\tau_1,\ldots ,\tau_n} B^{\tau_1,\ldots ,\tau_n}  \quad\text{for all }A,B\in \left(\ndo\CC^{M|N}\right)^{\ot n}.
\eeq

Next, we follow the exposition in \cite[Chap. 1]{Mnew} to recall the fusion procedure, which goes back to Jucys \cite{Juc}. Let $\mathfrak{S}_n$ be the symmetric group on the set $\left\{1,\ldots ,n\right\}$ and $\lambda  $ a partition of $n$. Hence, $\lambda=(\lambda_1,\ldots ,\lambda_m)\vdash n$ is a decreasing sequence of integers $\lambda_1 \geqslant\ldots \geqslant\lambda_m \geqslant 1$ such that $\lambda_1 +\ldots+\lambda_m =n$. We   identify $\lambda$ with its Young diagram.
Let 
$$
h(\lambda) =\prod_{(i,j)\in\lambda} (\lambda_i+\lambda_j^\prime -i-j+1),
$$ 
where $\lambda_j^\prime$ is the number of boxes in the column $j$ of $\lambda$.
The group algebra $\CC[\mathfrak{S}_n]$ is isomorphic to the direct sum of matrix algebras
$$
\CC[\mathfrak{S}_n]\cong\bigoplus_{\lambda\vdash n} M_{f_\lambda}(\CC),\quad\text{where }f_\lambda =  \frac{n! }{ h(\lambda)} 
$$ 
is the number of standard tableaux of shape $\lambda$,
 $M_k(\CC)$ denotes the algebra of all complex $k\times k$ matrices and the sum goes over all partitions $\lambda$ of $n$. For  any standard tableaux $\mathcal{U}$ and $\mathcal{U}^\prime$ denote by $e_{\mathcal{U}\ts\mathcal{U}^\prime}$ the matrix units in $M_{f_\lambda}(\CC)$. 
Note that the elements $e_{\mathcal{U} } = e_{\mathcal{U}\ts\mathcal{U} }$ are primitive idempotents of $\CC[\mathfrak{S}_n]$.
For any $1\leqslant i<j\leqslant n$, denote by $ (i,j)$ the transposition of the elements $i$ and $j$.
 Choose any standard tableau $\mathcal{U}$ of shape $\lambda\vdash n$ with entries $1,\ldots ,n$. 
Let 
\beq\label{contents}
 c_a =c_a(\mathcal{U}) =j-i\quad\text{if the element $a=1,\ldots ,n$ occupies the  box $(i,j)$ in $\mathcal{U}$.}
\eeq
Consider the rational function in complex variables $u_1,\ldots ,u_n$ with values in the group algebra $\CC[\mathfrak{S}_n][[h]]$  over $\CC[[h]]$ defined by
\beq\label{phi_super}
\phi(u_1,\ldots ,u_n)=
\prod_{1\leqslant i<j\leqslant n}\left(1-\frac{h(i,j)}{u_i -u_j}\right),
\eeq
 where the product is taken with respect to  the     lexicographical order on the set of pairs $(i,j)$ with $1\leqslant i<j\leqslant n$.
By the fusion procedure \cite[Prop. 1.1.7]{Mnew}, we have
\beq\label{super_fussion}
e_{\mathcal{U}} = \frac{1}{h(\lambda)}\phi(u_1,\ldots ,u_n)
\big|_{u_1=hc_1}\big|_{u_2=hc_2}\ldots \big|_{u_n=hc_n}.
\eeq

By \cite[Subsect. A.10]{Gow}, there exists a unique action of the 
  symmetric group 		$\mathfrak{S}_n$ on the tensor product
$(\CC^{M|N})^{\ot n}$ such that 
\beq\label{super_acction}
 (i,i+1)\mapsto P_{i\ts i+1}\quad\text{for all }i=1,\ldots ,n-1 ,
\eeq
where 
		$P_{i\ts i+1}$ is the action of the permutation operator $P$, as  given by \eqref{R}, on the tensor factors $i$ and $i+1$.
		Denote by $E_\mathcal{U}$ the image of the primitive idempotent $e_\mathcal{U}$, associated with the standard tableau $\mathcal{U} $ of shape $\lambda\vdash n$, under this action.

		In the next two lemmas, we recall two properties of the idempotent $E_U$, which will be essential in the proof of Theorem \ref{super_center} below. Recall that the integers $c_a=c_a(\mathcal{U})$ with $a=1,\ldots ,n$ are defined by \eqref{contents}.

\begin{lem}
The element $E_\mathcal{U}$ satisfies
\beq\label{eu_ybe}
E_\mathcal{U}\ts \R_{01}^-\ldots \R_{0n}^-=\R_{0n}^-\ldots \R_{01}^-\ts E_\mathcal{U}\Fand
E_\mathcal{U}\ts \R_{0n}^+\ldots \R_{01}^+=\R_{01}^+\ldots \R_{0n}^+\ts E_\mathcal{U},
\eeq
where
$$ 
\R_{0i}^-=\R_{0i} (x-hc_i )^{-1}\fand
\R_{0i}^+=\R_{0i} (x-hc_i ) \qquad\text{for }i=1,\ldots ,n 
$$
and the $R$-matrix $\R(u)$ is given by \eqref{hadicrmatrix}.
\end{lem}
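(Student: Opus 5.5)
The plan is to derive both identities in \eqref{eu_ybe} from the fusion procedure \eqref{super_fussion} combined with the Yang--Baxter equation. The scalar factor $g$ in $\R(u)=g(u/h)R(u)$ is a scalar series in $x$, so it appears as the same product $\prod_i g((x-hc_i)/h)$ on both sides. Hence it suffices to prove the identities for the unnormalized matrices $R_{0i}(x-hc_i)^{\pm1}$. For the inverses, the first identity is equivalent, after inverting both sides, to
\[
R_{01}(x-hc_1)\cdots R_{0n}(x-hc_n)\, E_{\mathcal U} = E_{\mathcal U}\, R_{0n}(x-hc_n)\cdots R_{01}(x-hc_1).
\]
This has the same shape as the second identity with the roles of left and right exchanged. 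So it is enough to establish both "intertwining" relations
\[
E_{\mathcal U} R_{0n}\cdots R_{01}=R_{01}\cdots R_{0n}E_{\mathcal U},\qquad R_{01}\cdots R_{0n}E_{\mathcal U}=E_{\mathcal U}R_{0n}\cdots R_{01},
\]
with $R_{0i}=R_{0i}(x-hc_i)$.

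The key step is the image $\Phi(u_1,\ldots,u_n)$ of the function \eqref{phi_super} under the action \eqref{super_acction}. Each factor $1-h(i,j)/(u_i-u_j)$ maps to $1-hP_{ij}/(u_i-u_j)=R_{ij}(u_i-u_j)$. Here I use that the action of \cite{Gow} sends every transposition $(i,j)$ to the graded permutation operator $P_{ij}$, which follows from \eqref{super_acction} by conjugation. Thus $\Phi$ is the ordered product of $R_{ij}(u_i-u_j)$ over the lexicographic order. I would then show, by repeated application of the Yang--Baxter equation \eqref{ybe} in the form $R_{ij}(u_i-u_j)R_{0i}(x-u_i)R_{0j}(x-u_j)=R_{0j}(x-u_j)R_{0i}(x-u_i)R_{ij}(u_i-u_j)$, that
\[
\Phi(u)\,R_{0n}(x-u_n)\cdots R_{01}(x-u_1)=R_{01}(x-u_1)\cdots R_{0n}(x-u_n)\,\Phi(u).
\]
This is the standard argument: the lexicographically ordered product corresponds to a reduced decomposition of the longest permutation, and each factor reverses one adjacent pair in the string of $R_{0i}$. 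The analogous identity with $\Phi$ on the right-hand side of the reversed product also holds, since $\Phi$ can equally be written as the product in the reverse ordered decomposition; see \cite[Chap.~1]{Mnew}. The same argument with the roles of the products swapped gives the second relation.

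The final step specializes $u_a=hc_a$ consecutively and divides by $h(\lambda)$, so that \eqref{super_fussion} turns $\Phi$ into $E_{\mathcal U}$. The main obstacle is that $\phi$ has poles at the contents, so the specialization is only valid in the consecutive sense. The identity above holds as rational functions, and both sides are regular under the consecutive evaluation $u_1=hc_1,\ u_2=hc_2,\ldots$, because the $R_{0i}(x-u_i)$ factors are regular in $u$ as power series in $x^{-1}$ and $\Phi$ is regular under this evaluation by \cite[Prop. 1.1.7]{Mnew}. So I would carry out the specialization stepwise, arguing at each step that multiplying by factors polynomial in $u$ commutes with taking the limit. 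If regularity of the opposite-order product is in doubt, I would instead use the alternative fusion formula with the reversed order, also in \cite{Mnew}, for the companion identity.
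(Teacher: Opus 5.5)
Your route is the paper's: you identify the image of $\phi$ with the lexicographic product $\prod_{i<j}R_{ij}(u_i-u_j)$ and use \eqref{ybe} to get $\Phi(u)\,R_{0n}(x-u_n)\cdots R_{01}(x-u_1)=R_{01}(x-u_1)\cdots R_{0n}(x-u_n)\,\Phi(u)$. You then specialize consecutively via \eqref{super_fussion}. Your remarks on stripping the scalar factor $g$ and on regularity under consecutive evaluation are fine. Two corrections are needed, though.

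First, the local instance of the Yang--Baxter equation that you display,
$R_{ij}(u_i-u_j)R_{0i}(x-u_i)R_{0j}(x-u_j)=R_{0j}(x-u_j)R_{0i}(x-u_i)R_{ij}(u_i-u_j)$, is false. Matching it with $R_{12}(a)R_{13}(a+b)R_{23}(b)=R_{23}(b)R_{13}(a+b)R_{12}(a)$ would force $x-u_i=(u_i-u_j)+(x-u_j)$, i.e. $u_i=u_j$. You need the instance that pushes $R_{ij}$ through the descending product $R_{0n}\cdots R_{01}$, where $R_{0j}$ stands to the left of $R_{0i}$. The correct instance is
\[
R_{ij}(u_i-u_j)\,R_{0j}(x-u_j)\,R_{0i}(x-u_i)=R_{0i}(x-u_i)\,R_{0j}(x-u_j)\,R_{ij}(u_i-u_j).
\]
With this, your global identity, which is correct and coincides with the paper's, follows as you describe.

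Second, your reduction of the first identity does more than you realize. Write $\R_{0i}=\R_{0i}(x-hc_i)$. You cannot literally invert, since $E_{\mathcal U}$ is not invertible. Instead, multiply $E_{\mathcal U}\,\R_{01}^{-1}\cdots\R_{0n}^{-1}=\R_{0n}^{-1}\cdots\R_{01}^{-1}\,E_{\mathcal U}$ on the left by $\R_{01}\cdots\R_{0n}$ and on the right by $\R_{0n}\cdots\R_{01}$. This gives $\R_{01}\cdots\R_{0n}\,E_{\mathcal U}=E_{\mathcal U}\,\R_{0n}\cdots\R_{01}$, which is the second identity with its sides swapped.

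So the two ``intertwining relations'' you list are one and the same equation. Once the second identity is obtained from fusion, the first follows with nothing further. You need no reverse-order decomposition of $\Phi$, no identity with $\Phi$ to the right of $R_{0n}\cdots R_{01}$, and no alternative fusion formula. This observation also makes the paper's remark that the first equality ``follows analogously'' immediate.
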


\begin{prf}
Define
\beq\label{ruofn_super}
R(u_1,\ldots ,u_n) =\prod_{1\leqslant i<j\leqslant n} R_{ij}(u_i-u_j),
\eeq
where the product is taken with respect to  the     lexicographical order on the set of pairs $(i,j)$ with $1\leqslant i<j\leqslant n$ and the $R$-matrix $ R(u)$ is given by \eqref{hadicrmatrix}. 
Note that $R(u_1,\ldots ,u_n)$ equals the image of \eqref{phi_super} under the action \eqref{super_acction}.
By the Yang--Baxter equation \eqref{ybe}, we have
$$
R(u_1,\ldots ,u_n)\ts \R_{0n}(x-u_n)\ldots \R_{01}(x-u_1)=
\R_{01}(x-u_1)\ldots \R_{0n}(x-u_n)\ts R(u_1,\ldots ,u_n),
$$
where $R(u_1,\ldots ,u_n)$ is applied on the tensor factors $1,\ldots ,n$. 
Due to the fusion procedure \eqref{super_fussion}, by applying the consecutive evaluations 
$u_1=hc_1,\ldots ,u_n=hc_n$ to the identity above, we get
$$
E_{\mathcal{U}}\ts \R_{0n}(x-hc_n)\ldots \R_{01}(x-hc_1)=
\R_{01}(x-hc_1)\ldots \R_{0n}(x-hc_n)\ts E_{\mathcal{U}},
$$
which is exactly the second equality in \eqref{eu_ybe}. The first equality follows analogously.
\end{prf}

\begin{lem}
The element $E_\mathcal{U}$ satisfies
\beq\label{eu_rtt}
E_\mathcal{U}\ts T_{1}^+\ldots T_{n}^+ =T_{n}^+\ldots T_{1}^+\ts E_\mathcal{U}\Fand
E_\mathcal{U}\ts T_{n}^-\ldots T_{1}^-=T_{1}^-\ldots T_{n}^- \ts E_\mathcal{U},
\eeq
where
$$ 
T_{ i}^+=T_{ i\ts n+1}^+ (y+hc_i ) \fand
T_{ i}^-=T_{ i\ts n+1}^+ (y+hc_i )^{-1} \qquad\text{for }i=1,\ldots ,n.
$$
\end{lem}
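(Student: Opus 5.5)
We follow the same pattern as in the proof of the preceding lemma, with the Yang--Baxter equation replaced by the RTT relation \eqref{hadic_rtt12}. Let $R(u_1,\ldots ,u_n)$ be the product \eqref{ruofn_super}, acting on the tensor factors $1,\ldots ,n$ of $(\ndo\CC^{M|N})^{\ot n}\ot \dygtld$. Recall that it is the image of \eqref{phi_super} under the action \eqref{super_acction}.

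\textbf{Step 1.} We first prove
$$
R(u_1,\ldots ,u_n)\ts T_{1\, n+1}^+(y+u_1)\ldots T_{n\, n+1}^+(y+u_n)
=T_{n\, n+1}^+(y+u_n)\ldots T_{1\, n+1}^+(y+u_1)\ts R(u_1,\ldots ,u_n).
$$
This follows by induction on $n$ from \eqref{hadic_rtt12}. The differences of arguments are $(y+u_i)-(y+u_j)=u_i-u_j$, so $y$ drops out of the $R$-matrices. Each factor $R_{ij}(u_i-u_j)$ is moved through the product of $T$'s by the standard reordering argument for products of $R$-matrices in the lexicographic order. Since the coefficients of $R$ are even, no additional signs arise.

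\textbf{Step 2.} By the fusion procedure \eqref{super_fussion}, we apply the consecutive evaluations $u_1=hc_1,\ldots ,u_n=hc_n$. Because of the poles, this must be done in the sense of the fusion procedure: the identity is multiplied out and evaluated successively, exactly as in the previous lemma. The $T$-factors are formal series in $y$ that are polynomial (or regular) in the shifts $u_i$, so the evaluation commutes with them. This gives the first equality in \eqref{eu_rtt}, up to the scalar $h(\lambda)$, which cancels.

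\textbf{Step 3.} For the second equality, we invert the first. Taking inverses of both sides of $E_\mathcal{U} X = Y E_\mathcal{U}$ is not directly possible, since $E_\mathcal{U}$ is not invertible. Instead, we repeat Step 1 with inverses. Relation \eqref{hadic_rtt12} implies
$$
T_2^+(v)^{-1}T_1^+(u)^{-1}R_{12}(u-v)=R_{12}(u-v)T_1^+(u)^{-1}T_2^+(v)^{-1},
$$
by multiplying \eqref{hadic_rtt12} on both sides by the inverses. Iterating yields
$$
R(u_1,\ldots ,u_n)\ts T_n^{-}\cdots T_1^{-}=T_1^{-}\cdots T_n^{-}\ts R(u_1,\ldots ,u_n),
$$
with $T_i^-=T_{i\,n+1}^+(y+u_i)^{-1}$. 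Evaluating via \eqref{super_fussion} as before gives the second equality.

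The \textbf{main obstacle} is the evaluation step. The function $R(u_1,\ldots ,u_n)$ has poles at some of the points $u_i-u_j=hc_i-hc_j$, so we cannot substitute naively. We must justify that the consecutive evaluation of the identity is well defined. The previous lemma's proof takes this for granted, and it works here because the $T$-factors depend polynomially on $u_i$ after expansion in $y$.
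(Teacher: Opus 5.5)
Your proposal follows the paper's proof. You derive the fused relation $R(u_1,\ldots,u_n)\,T^+_{1\,n+1}\cdots T^+_{n\,n+1}=T^+_{n\,n+1}\cdots T^+_{1\,n+1}\,R(u_1,\ldots,u_n)$ from \eqref{hadic_rtt12}, then apply the consecutive evaluations of \eqref{super_fussion}, and treat the second equality ``analogously''. The paper substitutes $u_i=y+hc_i$ into $T^+(u_i)$, whereas you substitute $u_i=hc_i$ into $T^+(y+u_i)$. These are the same, since $R(u_1,\ldots,u_n)$ depends only on the differences $u_i-u_j$. Your remark on the evaluation step is right once stated precisely. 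With $u_i=hv_i$ the $R$-matrices become $1-P_{ij}/(v_i-v_j)$, and modulo $h^p$ each coefficient of $y^k$ in $T^+(y+hv_i)$ is a polynomial in $v_i$. In the variable $u_i$ itself it is only a power series, not a polynomial. So the fusion procedure over $\CC$ applies coefficientwise.

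The two-factor display in Step 3 is wrong, however. Write \eqref{hadic_rtt12} as $RAB=BAR$ with $A=T_1^+(u)$, $B=T_2^+(v)$, $R=R_{12}(u-v)$. Multiplying by $A^{-1}B^{-1}$ on the left and by $B^{-1}A^{-1}$ on the right gives
\[
T_1^+(u)^{-1}\,T_2^+(v)^{-1}\,R_{12}(u-v)=R_{12}(u-v)\,T_2^+(v)^{-1}\,T_1^+(u)^{-1},
\]
which has the inverses in the opposite order to yours on both sides. After inverting both sides, your version is equivalent to $T_1^+(u)T_2^+(v)R_{12}(u-v)=R_{12}(u-v)T_2^+(v)T_1^+(u)$. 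That does not follow from \eqref{hadic_rtt12}; the valid relation of this shape has $R_{12}(v-u)$ in place of $R_{12}(u-v)$.

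Your $n$-fold identity for the inverses is nonetheless correct, and it needs no induction. Multiply the identity of Step 1 by $(T^+_{n\,n+1}\cdots T^+_{1\,n+1})^{-1}$ on the left and by $(T^+_{1\,n+1}\cdots T^+_{n\,n+1})^{-1}$ on the right. The same trick applied directly to the first equality $E_\mathcal{U}X=YE_\mathcal{U}$ gives $Y^{-1}E_\mathcal{U}=E_\mathcal{U}X^{-1}$, which is exactly the second equality. Only $X$ and $Y$ need to be inverted, not $E_\mathcal{U}$, so your concern about its non-invertibility is unfounded.

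Finally, in the proof of Theorem \ref{super_center} the lemma is used with $T_i^-=T^-_{i\,n+1}(y+hc_i)^{-1}$; the $T^+$ in the statement is a misprint. That case follows in the same way, since $T^-(u)$ satisfies the same relation \eqref{hadic_rtt12}.
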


\begin{prf}
The defining relation \eqref{hadic_rtt12} for the double Yangian implies
$$
R(u_1,\ldots ,u_n)\ts T_{1\ts n+1}^+(u_1)\ldots T_{n\ts n+1}^+(u_n)=
T_{n\ts n+1}^+(u_n)\ldots T_{1\ts n+1}^+(u_1)\ts R(u_1,\ldots ,u_n),
$$
where $R(u_1,\ldots ,u_n)$, as defined by \eqref{ruofn_super}, is applied on the tensor factors $1,\ldots ,n$. 
Due to the fusion procedure \eqref{super_fussion}, by applying the consecutive evaluations 
$u_1=y+hc_1,\ldots ,u_n=y+hc_n$ to the identity above, we get
$$
E_\mathcal{U}\ts T_{1\ts n+1}^+(y+hc_1)\ldots T_{n\ts n+1}^+(y+hc_n)=
T_{n\ts n+1}^+(y+hc_n)\ldots T_{1\ts n+1}^+(y+hc_1)\ts E_\mathcal{U} ,
$$
which is exactly the first equality in \eqref{eu_rtt}. The first equality follows analogously.
\end{prf}

		Set $\kappa = (N-M)/2$. 
		Introduce the formal power series
		$T_\mathcal{U} (u)\in \dygtld[[u^{\pm 1}]]$ by
\beq\label{telambda}
T_\mathcal{U} (u)=\str_{1,\ldots ,n}
E_\mathcal{U}\ts
T_1^+(u+hc_1)\ldots T_n^+(u+hc_n)\ts 
T_n^-(u+h(c_n+\kappa))^{-1}\ldots T_1^-(u+h(c_1+\kappa))^{-1}.
\eeq

\begin{thm}\label{super_center}
All coefficients of $T_\mathcal{U}(u)$ belong to the center of  $\dygtld$.
\end{thm}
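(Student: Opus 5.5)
Since $\dygtld$ is topologically generated by the coefficients of $T^+(v)$ and $T^-(v)$, it suffices to show that $T_\mathcal{U}(u)$ supercommutes with both $T_0^-(v)$ and $T_0^+(v)$, where $0$ is an additional auxiliary tensor factor. Because $T_\mathcal{U}(u)$ is even (the supertrace of an even operator), the required identities are $T_0^\pm(v)\, T_\mathcal{U}(u)=T_\mathcal{U}(u)\, T_0^\pm(v)$. Set $x=v-u$. The plan is to push $T_0^\pm(v)$ through the product
$$
T_{[n]}^+\,\bigl(T_{[n]}^-\bigr)^{-1}=T_1^+(u+hc_1)\ldots T_n^+(u+hc_n)\, T_n^-(u+h(c_n+\kappa))^{-1}\ldots T_1^-(u+h(c_1+\kappa))^{-1}
$$
using the defining relations \eqref{hadic_rtt12} and \eqref{hadic_rtt3} at level $c=N-M=2\kappa$. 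This produces products of $R$-matrices $\R_{0i}$ on both sides. Then $E_\mathcal{U}$ together with \eqref{eu_ybe} lets us reorder them, and the supertrace (supercyclicity \eqref{str_cyclic}, or more precisely its partial version over factors $1,\ldots,n$ with factor $0$ kept) lets us move them around the product. Finally, crossing symmetry \eqref{hadic_csym} at the critical shift $(M-N)h=-2\kappa h$ cancels what remains.

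The first step is the case of $T_0^-(v)$. By \eqref{hadic_rtt3}, $T_0^-(v)\,T_i^+(u+hc_i)=\R_{0i}(x-hc_i+h\kappa)^{-1}\, T_i^+\,T_0^-(v)\,\R_{0i}(x-hc_i-h\kappa)$. By \eqref{hadic_rtt12}, in the form $T_0^-(v)\,T_i^-(w)^{-1}=R_{0i}(v-w)^{-1}\,T_i^-(w)^{-1}\,T_0^-(v)\,R_{0i}(v-w)$ (suitably rearranged), passing $T_0^-$ through the inverse $T^-$'s gives conjugations by $R_{0i}(x-hc_i-h\kappa)$ factors. The key point is that the shifts match exactly: the factor emerging from the $T^+$ part, $\R_{0i}(x-hc_i-h\kappa)$, is cancelled by the $R$-matrix coming from the $T^-$ part. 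The mismatch between $R$ and $\R$ is a scalar $g$, so these cancel, and this is precisely where the choice $\kappa=c/2$ in \eqref{telambda} is used. What remains on the left is the product $\R_{01}^{-}\ldots\R_{0n}^{-}$ with arguments $x-hc_i+h\kappa$, and on the right, after $T_0^-(v)$, the mirrored product. Using \eqref{eu_ybe} with $x$ replaced by $x+h\kappa$, we move the left product past $E_\mathcal{U}$. The supercyclicity of $\str_{1,\ldots,n}$ then moves it to the right end, where it meets its inverse after $T_0^-(v)$ is commuted back. Some care is needed here, since the $\R_{0i}$ act on factor $0$ as well; I would handle this by transposing in factors $1,\ldots,n$ via \eqref{another_prop}.

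The second step treats $T_0^+(v)$, and this is the main obstacle. Passing $T_0^+$ through the $T^+$ part gives ordinary conjugation by $R_{0i}(x-hc_i)$. Passing it through the $T^-(\cdot)^{-1}$ part via \eqref{hadic_rtt3} gives $\R_{0i}(x-hc_i-h\kappa\pm h\kappa)$-type factors, whose shifts do not cancel directly: one gets $\R(x-hc_i)$ from one side and $\R(x-hc_i-2h\kappa)$ from the other. To reconcile them I would move the $R$-matrices cyclically under $\str_{1,\ldots,n}$. This requires taking partial supertransposes $\tau_1,\ldots,\tau_n$ with \eqref{another_prop}, which inverts the order and converts $\R^{-1}$ into $(\R^{-1})^{\tau}$. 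Crossing symmetry \eqref{hadic_csym}, $(\R(w-(M-N)h)^{-1})^{\tau}\R(w)^{\tau}=I$, then cancels exactly when the shift is $2\kappa h=(N-M)h$, i.e. at the critical level.

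Bookkeeping of the signs from the super transpositions uses \eqref{tau_antihom}, and the fusion relations \eqref{eu_rtt} and \eqref{eu_ybe} are needed to permute the order of the $R$-products so that they align with the reversed orders produced by transposition. Everything must be done on coefficients modulo $h^p$ and $I_p$, so that all series make sense in the completion $\dygtld$. Convergence is guaranteed since the vacuum-type action of $T^-$ is polynomial in $u^{-1}$ modulo each $I_p$.
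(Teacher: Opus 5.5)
Your overall plan matches the paper's: push $T_0^\pm(v)$ through, rearrange with \eqref{eu_ybe}, \eqref{eu_rtt} and supercyclicity, pass to partial supertransposes via \eqref{another_prop}, and cancel with crossing symmetry \eqref{hadic_csym}. Your analysis of the $T_0^+(v)$ case is also right. However, your first step, the $T_0^-(v)$ case (the one the paper writes out in full), contains a genuine error.

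The factor $\R_{0i}(x-hc_i-h\kappa)$ that appears to the right of $T_0^-(v)$ after it passes $T_i^+(u+hc_i)$ is not cancelled by the $T^-$ part. Relation \eqref{hadic_rtt12} gives $T_0^-(v)\,R_{0i}(v-w)\,T_i^-(w)^{-1}=T_i^-(w)^{-1}\,R_{0i}(v-w)\,T_0^-(v)$. Your displayed formula for $T_0^-(v)\,T_i^-(w)^{-1}$ does not follow from \eqref{hadic_rtt12}; it is equivalent to $T_iRT_0=T_0RT_i$. So for $w=u+h(c_i+\kappa)$ the factor is merely transported across $T_i^-(w)^{-1}$, and this transport is all that $\kappa=c/2$ buys. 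With $\mathcal{T}_{[n]}(u)$ the product inside the supertrace in \eqref{telambda} and $x=v-u$, the result is
\begin{align*}
T_0^-(v)\,\mathcal{T}_{[n]}(u)={}&\R_{01}(x-hc_1+h\kappa)^{-1}\cdots\R_{0n}(x-hc_n+h\kappa)^{-1}\,\mathcal{T}_{[n]}(u)\\
&\times\R_{0n}(x-hc_n-h\kappa)\cdots\R_{01}(x-hc_1-h\kappa)\,T_0^-(v).
\end{align*}
The two strings of $R$-matrices are not inverse to each other, since their arguments differ by $2h\kappa$. So nothing ``meets its inverse''. Moreover, $\str_{1,\ldots,n}$ does not let you cycle one string past the other, because both act nontrivially on factor $0$. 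Even an honest conjugation $A\,X\,A^{-1}$ with $A$ acting on factors $0,1$ does not leave $\str_1$ invariant; take $A=P$.

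Your Step 1 as written uses only $\kappa=c/2$ and never the value $c=N-M$, which already signals that something is missing. The critical level has to enter here exactly as in your Step 2. First use \eqref{eu_ybe}, \eqref{eu_rtt} and supercyclicity, applied only to $E_\mathcal{U}$ (which acts on factors $1,\ldots,n$ alone), to bring the left string of $R$-matrices to the front. Then apply \eqref{another_prop}, so that $\bigl(\R_{0i}(x-hc_i+h\kappa)^{-1}\bigr)^{\tau_i}$ and $\bigl(\R_{0i}(x-hc_i-h\kappa)\bigr)^{\tau_i}$ become adjacent. These cancel by \eqref{hadic_csym} precisely because $2h\kappa=(N-M)h$. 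With Step 1 repaired this way, your argument coincides with the paper's proof.
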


\begin{prf}
It suffices to show that
$$
T^+(z)\ts T_\mathcal{U}(u)=T_\mathcal{U}(u)\ts T^+(z)\fand
T^-(z)\ts T_\mathcal{U}(u)=T_\mathcal{U}(u)\ts T^-(z).
$$
Both identities can be verified by arguing as in the proof of \cite[Thm. 4.4]{JKMY}, which is an even counterpart of this theorem. However, for completeness, we   present the proof of the second identity  in full detail.

Let
$$
\mathcal{T}_{[n]}(u) = T_1^+(u+hc_1)\ldots T_n^+(u+hc_n)\ts 
T_n^-(u+h(c_n+\kappa))^{-1}\ldots T_1^-(u+h(c_1+\kappa))^{-1},
$$
so that the series \eqref{telambda} can be written more briefly as
$$
T_\mathcal{U}(u)=\str_{1,\ldots ,n}
E_\mathcal{U}\ts
\mathcal{T}_{[n]}(u).
$$
The defining relations \eqref{hadic_rtt12} and \eqref{hadic_rtt3} for the double Yangian imply 
\begin{align*}
&T_0^-(z)\ts T_1^+(u) =
\R_{01}(z-u+h\kappa)^{-1} T_1^+(u)\ts T_0^-(z)\ts \R_{01}(z-u-h\kappa),\\
&T_0^-(z)\ts \R_{01}(z-u-h\kappa)\ts T_1^-(u+h\kappa)^{-1} =
T_1^-(u+h\kappa)^{-1}\ts \R_{01}(z-u-h\kappa)\ts T_0^-(z).
\end{align*}
By using these identities, we find
$$
 T_0^-(z)\ts \mathcal{T}_{[n]}(u)
= 
\R_{01}^-\ldots \R_{0n}^-\ts
\mathcal{T}_{[n]}(u)\ts
\R_{0n}^+\ldots \R_{01}^+\ts
T_0^-(z),
$$
where
$$ 
\R_{0i}^-=\R_{0i} (z-u+h(\kappa-c_i))^{-1}\Fand
\R_{0i}^+=\R_{0i} (z-u-h(\kappa+c_i)) 
$$
 for $i=1,\ldots ,n$. Hence, we need to prove that
\beq\label{super_tmp_01}
\str_{1,\ldots ,n}
E_\mathcal{U}\ts \R_{01}^-\ldots \R_{0n}^-\ts
\mathcal{T}_{[n]}(u)\ts
\R_{0n}^+\ldots \R_{01}^+
=
\str_{1,\ldots ,n}
E_\mathcal{U}\ts  
\mathcal{T}_{[n]}(u) .
\eeq

By using the first identity in \eqref{eu_ybe} for $x=z-u+h\kappa$, we find
$$
E_\mathcal{U}\ts \R_{01}^-\ldots \R_{0n}^-
=  \R_{0n}^-\ldots \R_{01}^-\ts E_\mathcal{U} 
=\R_{0n}^-\ldots \R_{01}^-\ts E_\mathcal{U}^2
=E_\mathcal{U} \ts \R_{0n}^-\ldots \R_{01}^-\ts E_\mathcal{U} 
$$
Hence, the left-hand side in \eqref{super_tmp_01} is equal to 
$$
\str_{1,\ldots ,n}
E_\mathcal{U}\ts \R_{01}^-\ldots \R_{0n}^-\ts E_\mathcal{U}\ts
\mathcal{T}_{[n]}(u)\ts
\R_{0n}^+\ldots \R_{01}^+.
$$
As the element $E_\mathcal{U}$ is even, we can use the property \eqref{str_cyclic} of the supertrace to write the expression above as
$$
\str_{1,\ldots ,n}
 \R_{01}^-\ldots \R_{0n}^-\ts E_\mathcal{U}\ts
\mathcal{T}_{[n]}(u)\ts
\R_{0n}^+\ldots \R_{01}^+\ts E_\mathcal{U}.
$$
Next, we use the equalities \eqref{eu_rtt} for $y=u$ and $y=u+h\kappa$ and the second equality in \eqref{eu_ybe} for $x=z-u-h\kappa$ to move the left copy of 
$ E_\mathcal{U}$ to the right, thus getting
$$
\str_{1,\ldots ,n}
 \R_{01}^-\ldots \R_{0n}^-\ts  
\mathcal{T}_{[n]}(u)^\prime\ts
\R_{01}^+\ldots \R_{0n}^+\ts E_\mathcal{U}^2
=
\str_{1,\ldots ,n}
 \R_{01}^-\ldots \R_{0n}^-\ts  
\mathcal{T}_{[n]}(u)^\prime\ts
\R_{01}^+\ldots \R_{0n}^+\ts E_\mathcal{U}
,
$$
where
$$
\mathcal{T}_{[n]}(u)^\prime = 
T_n^+(u+hc_n)\ldots T_1^+(u+hc_1)\ts 
T_1^-(u+h(c_1+\kappa))^{-1}\ldots T_n^-(u+h(c_n+\kappa))^{-1}.
$$
Finally, we use the same properties of  $E_\mathcal{U}$ to move its remaining copy  to the left, so that we get
$$ 
\str_{1,\ldots ,n}
 \R_{01}^-\ldots \R_{0n}^-\ts E_\mathcal{U}\ts  
\mathcal{T}_{[n]}(u) \ts
\R_{0n}^+\ldots \R_{01}^+ 
.
$$

By   the property \eqref{another_prop} of the supertrace, we conclude that this is   equal to
\begin{align*}
&\str_{1,\ldots ,n}
 \left(\R_{01}^-\ldots \R_{0n}^-\right)^{\tau_1\ldots \tau_n} \left(E_\mathcal{U}\ts  
\mathcal{T}_{[n]}(u) \ts
\R_{0n}^+\ldots \R_{01}^+ \right)^{\tau_1\ldots \tau_n}\\
=& 
\str_{1,\ldots ,n}
  (\R_{01}^-)^{\tau_1 } \ldots (\R_{0n}^-)^{ \tau_n} 
	(\R_{0n}^+)^{  \tau_n}\ldots (\R_{01}^+  )^{\tau_1 }
	\left(E_\mathcal{U}\ts  
\mathcal{T}_{[n]}(u) \right)^{\tau_1\ldots \tau_n} .
\end{align*}
The $R$-matrices $(\R_{0i}^-)^{ \tau_i} 
	$ and $(\R_{0i}^+)^{  \tau_i}$ cancel,  due to the crossing symmetry  \eqref{hadic_csym}.
	Therefore, by \eqref{str_transposition},
  the expression above is equal to
	$$
\str_{1,\ldots ,n}
	\left(E_\mathcal{U}\ts  
\mathcal{T}_{[n]}(u) \right)^{\tau_1\ldots \tau_n} 
=
\str_{1,\ldots ,n}
	 E_\mathcal{U}\ts  
\mathcal{T}_{[n]}(u)  
=T_\mathcal{U}(u),
$$
as required.
\end{prf}
	
At the end of this section, we present two simple applications of Theorem \ref{super_center}. 
 Define the {\em center} of   $h$-adic quantum vertex superalgebra $V$ by
$$
\mathfrak{z}(V)=\left\{v\in V\,:\,Y(u,z)v\in V[[z]]\text{ for all }u\in V\right\};
$$
see \cite{DGK,JKMY} for more information on the notion of center in quantum vertex algebra theory. 
Note that, by the form of the vertex operator map \eqref{hadic_y},   all elements $v\in V^c(\glmn)$ which satisfy $T^-(u)v=I\ot v$ belong to the center 
$\mathfrak{z}(V^c(\glmn))$.
Denote by $V^{crit}(\glmn)$ the $h$-adic quantum vertex superalgebra $V^c(\glmn)$ at the critical level $c=N-M$.

\begin{kor}
All coefficients of the series
\beq\label{qvoa_telambda}
T^+_\mathcal{U}(u)\coloneqq\str_{1,\ldots ,n}
E_\mathcal{U}\ts
T_1^+(u+hc_1)\ldots T_n^+(u+hc_n)\vac \in V^{crit}(\glmn)  [[u]].
\eeq 
belong to the center of $V^{crit}(\glmn)$.
\end{kor}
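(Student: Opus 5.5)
The plan is to reduce the corollary to Theorem \ref{super_center} through the remark preceding the statement: a vector $v\in V^{crit}(\glmn)$ with $T^-(z)v=I\ot v$ lies in $\mathfrak{z}(V^{crit}(\glmn))$. By \eqref{hadic_y}, $Y(T^+_{[n]}(u)\vac,z)$ is a product of $T^+$'s, which contain only nonnegative powers of $z$ and only creation operators, and inverses of $T^-$'s. If the $T^-$ factors act trivially on $v$, then $Y(w,z)v\in V[[z]]$ for $w$ in the dense span of coefficients of $T^+_{[n]}(u)\vac$. By $h$-adic continuity this extends to all $w\in V$. So it suffices to prove
\[
T^-_0(z)\,T^+_\mathcal{U}(u)=T^+_\mathcal{U}(u)\,T^-_0(z)\,\vac = I\ot T^+_\mathcal{U}(u),
\]
using $T^-(z)\vac=I\ot\vac$.

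First I observe that $V^{crit}(\glmn)$ is a module over $\dygtld$. Indeed, every vector is annihilated by $I_p$ for $p$ large, since the action of $T^-(u)$ is given by the $R$-matrix formula preceding Definition \ref{qvoa}, which is polynomial in $u^{-1}$ modulo $h^p$ on each vector; this is compatible with the $h$-adic completions. The element $T_\mathcal{U}(u)$ from \eqref{telambda}, applied to $\vac$, gives
\[
\str_{1,\ldots,n}E_\mathcal{U}\,T_1^+(u+hc_1)\cdots T_n^+(u+hc_n)\,T^-_n(\cdot)^{-1}\cdots T^-_1(\cdot)^{-1}\vac .
\]
Each $T^-_i(\cdot)^{-1}\vac=\vac$, so $T_\mathcal{U}(u)\vac=T^+_\mathcal{U}(u)$.

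Next, Theorem \ref{super_center} gives $T^-_0(z)T_\mathcal{U}(u)=T_\mathcal{U}(u)T^-_0(z)$ in $\dygtld$. Here the coefficients are even, so no sign issues arise, the generators $t_{ij}^{(r)}$ entering via $T^-$ having parity $\bar i+\bar j$ matched by the matrix unit. Applying both sides to $\vac$ yields
\[
T^-_0(z)\,T^+_\mathcal{U}(u)=T_\mathcal{U}(u)\,T_0^-(z)\vac=T_\mathcal{U}(u)(I\ot\vac)=I\ot T^+_\mathcal{U}(u).
\]
By the remark, each coefficient of $T^+_\mathcal{U}(u)$ then lies in the center. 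Finally, $T^+_\mathcal{U}(u)$ lies in $V[[u]]$ because each $T^+(u+hc_i)$ is a series in nonnegative powers of its argument.

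I expect the main obstacle to be justifying that the critical-level vacuum module is a module over the completion $\dygtld$, and that the remark's implication from $T^-$-invariance to centrality holds for all $w$. This needs the density and continuity argument from the proof of Theorem \ref{glavni_teorem} together with the shape of \eqref{hadic_y}. I would handle both briefly by citing those arguments.
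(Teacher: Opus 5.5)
Your proposal is correct and is essentially the paper's own argument: the same chain $T^-(z)\,T^+_\mathcal{U}(u)=T^-(z)\,T_\mathcal{U}(u)\vac=T_\mathcal{U}(u)\,T^-(z)\vac=I\ot T^+_\mathcal{U}(u)$, obtained from Theorem~\ref{super_center}, combined with the remark that vectors on which $T^-$ acts trivially lie in $\mathfrak{z}(V^{crit}(\glmn))$. The paper leaves implicit the two points you justify: that $\dygtld$ acts on the critical-level vacuum module (strictly, $I_p$ sends each vector into $h^k V^{crit}(\glmn)$ for $p$ large rather than annihilating it), and the density/continuity step.
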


\begin{prf}
The corollary follows by the well-known  argument, 
$$
T^-(u)\ts T^+_\mathcal{U}(u)
=T^-(u)\ts T_\mathcal{U}(u)1
=T_\mathcal{U}(u) \ts T^-(u)  1
=T_\mathcal{U}(u)    1
=T^+_\mathcal{U}(u), 
$$
which relies on Theorem \ref{super_center}.
\end{prf}

The next corollary  uses the identification of the $\CC[[h]]$-module 
$V^{crit}(\glmn)$ with the $h$-adically completed dual Yangian   $\Yd_h$, so that the coefficients of the series $T^+_\mathcal{U}(u)$, given by \eqref{qvoa_telambda}, are  regarded
as elements of   $\Yd_h$.

\begin{kor}\label{super_kom}
The coefficients of all $T^+_\mathcal{U}(u)$ generate a commutative subalgebra of the $h$-adically completed dual Yangian   $\Yd_h$.
\end{kor}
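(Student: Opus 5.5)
The plan is to use the standard argument for central elements of a quantum vertex algebra: vertex operators of central vectors act as central operators, and applying them to the vacuum produces commuting elements.

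Let $a$ and $b$ be coefficients of $T^+_{\mathcal{U}}(u)$ and $T^+_{\mathcal{V}}(v)$, respectively, for standard tableaux $\mathcal{U},\mathcal{V}$. Under the identification $V^{crit}(\glmn)\cong \Yd_h$ we have $a=x\vac$ and $b=y\vac$, where $x,y\in\Yd_h$ are the corresponding coefficients of
$$\str_{1,\ldots,n}E_{\mathcal{U}}\ts T_1^+(u+hc_1)\ldots T_n^+(u+hc_n)$$
and of its $\mathcal{V}$-analogue. The product $xy$ in $\Yd_h$ corresponds to $x\ts b=x\ts y\vac$. We must therefore show $x\ts y\vac=y\ts x\vac$.

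\textbf{Step 1: replace $x$ by the central element $T_\mathcal{U}(u)$.} The factor $T^-(\cdot)^{-1}$ in \eqref{telambda} acts on $b$ trivially, modulo central considerations. Indeed, by the preceding corollary, $b$ lies in the center, i.e. $T^-(w)\ts b=I\ot b$, and the same holds for $T^-(w)^{-1}$. Hence the coefficients of $T_\mathcal{U}(u)$ act on $b$ exactly as those of $\str E_{\mathcal{U}}T^+\cdots T^+$ do, after expanding the product of the $T^-$-inverses on the right. One bookkeeping point needs care: in $T_\mathcal{U}(u)$ the $T^-$ factors stand to the right of the $T^+$ factors inside a single supertrace, so they hit $b$ first. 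This gives
$$T_\mathcal{U}(u)\ts b=\big(\str_{1,\ldots,n} E_{\mathcal{U}}\ts T_1^+\cdots T_n^+\big)\ts b=T^+_{\mathcal{U}}(u)\text{-product with }b,$$
that is, coefficientwise $x\ts b$ equals the corresponding coefficient of $T_\mathcal{U}(u)\ts b$. The same computation with $b$ replaced by $\vac$ gives $T_\mathcal{U}(u)\vac=T^+_\mathcal{U}(u)$, as in the proof of the preceding corollary.

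\textbf{Step 2: use centrality.} By Theorem \ref{super_center}, the coefficients of $T_\mathcal{U}(u)$ and $T_\mathcal{V}(v)$ are central in $\dygtld$, and $V^{crit}(\glmn)$ is a module over this completion. Therefore
$$T_\mathcal{U}(u)\ts T^+_\mathcal{V}(v)=T_\mathcal{U}(u)\ts T_\mathcal{V}(v)\vac=T_\mathcal{V}(v)\ts T_\mathcal{U}(u)\vac=T_\mathcal{V}(v)\ts T^+_\mathcal{U}(u).$$
By Step 1, the left-hand side is the generating series of the products $xy$ and the right-hand side is that of $yx$, both taken in $\Yd_h$ via the module identification. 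This proves commutativity. The products are even, since $E_\mathcal{U}$ is even and the supertrace picks out diagonal entries, so every coefficient is an even element and no sign arises.

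\textbf{Main obstacle.} The delicate point is Step 1: justifying that the $T^-$-inverse factors inside the supertrace act as the identity on a central vector even though they are entangled with the $T^+$ factors through the auxiliary spaces. I would first write
$$T_1^+\cdots T_n^+\ts T_n^{-}(\cdot)^{-1}\cdots T_1^-(\cdot)^{-1}\ts b,$$
then apply $T^-_i(w)^{-1}b=I_i\ot b$ successively, starting from $i=1$, which is the rightmost factor. This is legitimate because each such factor acts on $b$ in its own auxiliary space, and $T^-(w)b=I\ot b$ implies the same for the inverse, entry by entry. I also need the completion $\dygtld$ to act on $V^{crit}(\glmn)$, which holds because the vacuum module is restricted.
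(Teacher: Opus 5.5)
Your argument is correct, but it is organized differently from the paper's. Write $x(u)$ for $\str_{1,\ldots,n}E_\mathcal{U}T^+_1\cdots T^+_n$ and $y(v)$ for its $\mathcal{V}$-analogue.

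The paper never lets $T_\mathcal{U}(u)$ act on any vector other than $\vac$. It computes $x(u)\,T^+_\mathcal{V}(v)=x(u)\,T_\mathcal{V}(v)\vac=T_\mathcal{V}(v)\,x(u)\vac=T_\mathcal{V}(v)\,T_\mathcal{U}(u)\vac$. That is, it uses centrality of $T_\mathcal{V}(v)$ to move it past the dual Yangian element $x(u)$, and only afterwards converts $x(u)\vac$ into $T_\mathcal{U}(u)\vac$. Symmetrically, $y(v)\,T^+_\mathcal{U}(u)=T_\mathcal{U}(u)\,T_\mathcal{V}(v)\vac$, and the two right-hand sides agree because $T_\mathcal{U}$ and $T_\mathcal{V}$ commute. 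So the $T^-$-factors only need to be removed against the vacuum, where $T^-(w)\vac=I\ot\vac$ holds by construction, and your Step 1 (your ``main obstacle'') never arises.

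Your route instead replaces $T_\mathcal{U}(u)$ by $x(u)$ on the central vectors $b$. This needs the stronger fact $T^-(w)\,b=I\ot b$. That fact is true, but it is what the \emph{proof} of the preceding corollary establishes, via $T^-(w)T_\mathcal{V}(v)\vac=T_\mathcal{V}(v)T^-(w)\vac$. It does not follow from membership in $\mathfrak{z}(V)$ as defined; the paper only notes the converse implication. Your ``i.e.'' should therefore cite that computation rather than the statement of the corollary.

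Your bookkeeping is sound: the $T^-$-inverses hit $b$ first, each acts in its own auxiliary space, and $T^-(w)b=I\ot b$ implies the same for $T^-(w)^{-1}$. Your version yields a slightly stronger intermediate statement: the central operators $T_\mathcal{U}(u)$ act on the vectors $T^+_\mathcal{V}(v)$ exactly as left multiplication by $x(u)$. The paper's version is shorter and uses only centrality plus the vacuum property.
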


\begin{prf}
The corollary is proved by the usual argument which we recall for completeness; see, e.g., the proofs of \cite[Cor. 4.4]{JLZ} or \cite[Cor. 4.6]{JKMY}. For any two standard tableaux $\mathcal{U}$ and $\mathcal{V}$ with entries $1,\ldots, n$ and $1,\ldots ,m$ respectively, by using Theorem \ref{super_center}, we find
$$
T_\mathcal{U}^+(u)\ts T_\mathcal{V}^+(v) =
T_\mathcal{U}^+(u)\ts T_\mathcal{V} (v)\vac =
T_\mathcal{V} (v)\ts T_\mathcal{U}^+(u)  \vac =
T_\mathcal{V} (v)\ts T_\mathcal{U} (u)  \vac  
$$
and, analogously, 
$T_\mathcal{V}^+(v)\ts T_\mathcal{U}^+(u) = 
T_\mathcal{U} (u)\ts T_\mathcal{V} (v)  \vac$. This implies
$T^+_\mathcal{V} (v)\ts T^+_\mathcal{U} (u)=T^+_\mathcal{U} (u)\ts T^+_\mathcal{V} (v)  $, as required.
\end{prf}

 \section{Central elements in the completed reflection algebra at the critical level}\label{section_last}

Let $G=(g_{ij}) $ be the diagonal matrix 
\eqref{gmatricca}. 
From now on, we consider the   algebra $\DBB$ over the commutative ring $\CC[[h]]$. As with the double Yangian, its definition, given in Subsection \ref{subsection_32}, is translated to the $h$-adic setting by formal rescaling $u\mapsto u/h$ of the variables and generators
$$
\beta_{ij}^{(r)}\mapsto h^{r-1}\ts \beta_{ij}^{(r)},\qquad
\beta_{ij}^{(-r)}\mapsto h^{-r }\ts \beta_{ij}^{(-r)}\Fand
\mathcal{C}\mapsto \mathcal{C}
$$
for all $r=1,2,\ldots  .$ We consider the $R$-matrices $R(u)$ and $\R(u)$ given by
 \eqref{hadicrmatrix}.

The
{\em reflection algebra $\DBB$} is defined as the $\ZZ_2$-graded unital associative algebra over $\CC[[h]]$ generated by the elements $\mathcal{C}$ and $\beta_{ij}^{(\pm r)}$  with $  i,j=1,\ldots , M+N$ and $r=1,2,\ldots,$ 
where $\mathcal{C}$ is   even and central   and  the parity of
  $\beta_{ij}^{(\pm r)}$  is  $\bar{i}+\bar{j}$.
The generators are
subject to the  defining relations which are written in terms of the  matrices 
$$
\mathcal{B}^{\pm }(u)=\sum_{i,j=1}^{M+N} (-1)^{\bar{i}\bar{j}+\bar{j}} e_{ij}\ot \beta^{\pm }_{ij}(u),
$$
 where
$$
\beta^-_{ij}(u)=g_{ij}+h\sum_{r\geqslant 1} \beta_{ij}^{(r)} u^{-r}
\Fand
\beta^+_{ij}(u)=g_{ij}-h\sum_{r\geqslant 1} \beta_{ij}^{(-r)} u^{r-1}.
$$
The defining relations  are given by  
\begin{align*}
R (u-v)\ts \mathcal{B}_1^\pm (u)\ts  R (u+v )\ts \mathcal{B}_2^\pm (v)
&= \mathcal{B}^\pm_2(v)\ts  R (u+v )\ts \mathcal{B}^\pm_1(u)\ts R (u-v),  \\
 \R (u-v +h\mathcal{C})\ts \mathcal{B}^-_1 (u)\ts  \R (u+v -h\mathcal{C})\ts \mathcal{B}_2^+ (v)
&= \mathcal{B}_2^+ (v)\ts  \R (u+v+h\mathcal{C} )\ts \mathcal{B}^-_1 (u)\ts  \R (u-v -h\mathcal{C}),  \\ 
\mathcal{B}^\pm (u)\ts \mathcal{B}^\pm (-u)&=1.
\end{align*}
  We assume that $\DBB$ is completed with respect to the $h$-adic topology.

Clearly, all elements   $\beta_{ij}^{(r)}$ (resp. $\beta_{ij}^{(-r)}$) with $r=1,2,\ldots$ belong to the Yangian $\Y$ (resp. $h$-adically completed   dual Yangian $\Yd_h$).  
Let $\BB$ (resp. $\BBd$) be the subalgebra of 
$\Y$ (resp.   $\Yd_h$) generated by all   $b_{ij}^{(r)}$ (resp. $b_{ij}^{(-r)}$) with $r=1,2,\ldots .$

Let us extend the notion of restricted module  to reflection algebras.
Suppose $W$ is
 a module over the superalgebra  $\DBB$. Then $W$ is said to be a {\em restricted module}, if 
it is topologically free as a $\CC[[h]]$-module and  
the action $\mathcal{B}^-(u)_W$ of the matrix $\mathcal{B}^-(u) $ belongs to $\ndo\CC^{M|N}\ot \om(W,W[u^{-1}]_h)$. Moreover, a $\DBB$-module $W$ is said to be of {\em level $c\in \CC$} if   the   central element $\mathcal{C}$ acts on $W$ as scalar multiplication by $c$.

\begin{ex}\label{ex_61}
The quotient of the algebra $\DBB$ over the $h$-adically completed left ideal generated by $\mathcal{C}-c$ and all elements $\beta_{ij}^{(r)}$ with $r\geqslant 1$ is naturally equipped with the structure of restricted $\DBB$-module of level $c$.  We shall denote it by $\BBdc$, as it is isomorphic, as a $\CC[[h]]$-module, to the $h$-adic completion of $\BBd$.
\end{ex}

For any positive integer $n$ and the variables $u=(u_1,\ldots ,u_n)$ define
$$
\mathcal{B}_{[n]}^\pm (u|z)= \prod_{i=1,\ldots ,n}^{\longrightarrow}  \left(\mathcal{B}_{i\ts n+1}^\pm(z+u_i)\ts R_{i\ts i+1}\ldots R_{i\ts n}\right),\quad\text{where }R_{rs}=R_{rs}(2z+u_r+u_s) 
$$
and the arrow indicates the order of factors. For example, we have
$$
\mathcal{B}_{[1]}^\pm (u|z)=\mathcal{B}_{1  2}^\pm (z+u_1)\fand
\mathcal{B}_{[2]}^\pm (u|z)=\mathcal{B}_{1  3}^\pm (z+u_1)\ts R_{12}(2z+u_1+u_2)\ts \mathcal{B}_{2  3}^\pm (z+u_2).
$$
Suppose $W$ is a restricted $\DBB$-module. Denote by $\mathcal{B}_{[n]}^\pm (u|z)_W$ the action of $\mathcal{B}_{[n]}^\pm (u|z)$ on $W$. We have
\begin{align*}
&\mathcal{B}_{[n]}^+ (u|z)_W\in(\ndo\CC^{M|N})^{\ot n}\ot \om (W,W((z))_h[[u_1,\ldots ,u_n]]),\\
&\mathcal{B}_{[n]}^- (u|z)_W\in(\ndo\CC^{M|N})^{\ot n}\ot \om (W,W[z^{-1}]_h[[u_1,\ldots ,u_n]]).
\end{align*}
Furthermore, if the variable $z$ is omitted, we write 
\beq\label{morenotation}
\mathcal{B}_{[n]}^\pm (u )= \prod_{i=1,\ldots ,n}^{\longrightarrow}  \left(\mathcal{B}_{i\ts n+1}^\pm( u_i)\ts R_{i\ts i+1}(u_i + u_{i+1})\ldots R_{i\ts n}( u_i+u_n)\right)  .
\eeq
Note that, in particular, we have $ \mathcal{B}_{[1]}^{\pm }(u) =\mathcal{B}^{\pm }(u) $

By using the above notation, along with  \eqref{RnM1} and  \eqref{RnM2}, the reflection equations  can be generalized to the identities of formal power series with coefficients in 
$$
\overbrace{(\ndo\mathbb{C}^{M|N})^{\ot  n}}^{1} \ot 
\overbrace{(\ndo\mathbb{C}^{M|N})^{\ot  m}}^{2}\ot
\overbrace{\DBB}^{3}  
$$
as follows.

\begin{lem}\label{refl_gen_lemma}
For any positive integers $n$ and $m$ and the families of variables $u=(u_1,\ldots ,u_n)$ and $v=(v_1,\ldots ,v_m)$ we have
\begin{align*}
&R_{nm}^{12} (u|v|z_1-z_2)\ts \mathcal{B}_{[n]}^{\pm\, 13} (u|z_1 )\ts  \wndr{R}_{nm}^{12} (u|v|z_1+z_2 )\ts \mathcal{B}_{[m]}^{\pm\,23} (v|z_2)\\
&\qquad\qquad= \mathcal{B}_{[m]}^{\pm\,23} (v|z_2)\ts  \wndr{R}_{nm}^{12} (u|v|z_1+z_2 )\ts \mathcal{B}_{[n]}^{\pm\, 13} (u|z_1 )\ts R_{nm}^{12} (u|v|z_1-z_2),  \\
 &\R_{nm}^{12} (u|v| z_1-z_2 +h\mathcal{C})\ts \mathcal{B}_{[n]}^{-\, 13} (u|z_1 )\ts  \wndr{\R}_{nm}^{12} (u|v|z_1+z_2 -h\mathcal{C})\ts \mathcal{B}_{[m]}^{+\,23}(v|z_2)\\
&\qquad\qquad =\mathcal{B}_{[m]}^{+\,23} (v|z_2)\ts  \wndr{\R}_{nm}^{12} (u|v|z_1+z_2 +h\mathcal{C})\ts \mathcal{B}_{[n]}^{-\, 13} (u|z_1 )\ts  \R_{nm}^{12} (u|v| z_1-z_2 -h\mathcal{C}).   
\end{align*}
\end{lem}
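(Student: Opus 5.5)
The plan is a double induction, first on $m$ with $n=1$, then on $n$ with $m$ arbitrary. Each step uses only the defining reflection relations, the Yang--Baxter equation \eqref{ybe} (which holds for both $R(u)$ and $\R(u)$, with mixed $R$/$\R$ products allowed since $g$ is scalar), and the unitarity $\R(u)\R(-u)=1$. The two identities are proved the same way. In the second, every $R$-matrix whose argument involves $z_1-z_2$ or $z_1+z_2$ is shifted by $\pm h\mathcal{C}$, while the internal $R$-matrices $R_{rs}(2z+u_r+u_s)$ inside $\mathcal{B}^\pm_{[n]}$ and $\mathcal{B}^\pm_{[m]}$ are unshifted. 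I therefore present the argument for the first identity and only indicate where the shifts go in the second.

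\emph{Step 1 ($n=1$, general $m$).} Write $\mathcal{B}^{\pm}_{[m]}(v|z_2)=\mathcal{B}^{\pm}_{[m-1]}(v'|z_2)\,\bigl(\text{internal }R\text{'s}\bigr)\,\mathcal{B}^\pm(z_2+v_m)$ in the obvious factorization. Here $v'=(v_1,\ldots,v_{m-1})$, and the internal $R$-matrices $R_{i\,m}(2z_2+v_i+v_m)$ are interleaved according to \eqref{morenotation}. Assume the identity for $m-1$. On the left-hand side, first use the induction hypothesis to move $\mathcal{B}^{\pm\,13}(z_1+u_1)$ past the block $\mathcal{B}^{\pm}_{[m-1]}$. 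Then use Yang--Baxter relations of the form
\[
R_{0j}(z_1-z_2+u_1-v_j)\,R_{0k}(z_1+z_2+u_1+v_k)\,R_{jk}(2z_2+v_j+v_k)=R_{jk}(2z_2+v_j+v_k)\,R_{0k}(z_1+z_2+u_1+v_k)\,R_{0j}(z_1-z_2+u_1-v_j)
\]
(note the arguments add up correctly) to move the internal $R$-matrices through the products $R^{12}_{1m}$ and $\wndr{R}^{12}_{1m}$. Finally apply the basic reflection relation \eqref{reflect12gen}, or \eqref{reflect3gen}, to the pair $\mathcal{B}^{13}(z_1+u_1)$, $\mathcal{B}^{23}(z_2+v_m)$, with $u=z_1+u_1$ and $v=z_2+v_m$.

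\emph{Step 2 (general $n$).} Induct on $n$ by the symmetric argument. Factor $\mathcal{B}^{\pm}_{[n]}(u|z_1)=\mathcal{B}^\pm(z_1+u_1)\,R_{12}\cdots R_{1n}\,\mathcal{B}^{\pm}_{[n-1]}(u''|z_1)$ with $u''=(u_2,\ldots,u_n)$. Apply Step 1 to the first factor, the induction hypothesis to the last block, and Yang--Baxter to move $R_{1k}(2z_1+u_1+u_k)$ through the mixed products. The ordering conventions in \eqref{RnM1} and \eqref{RnM2} (rows $\rightarrow$, columns $\leftarrow$ for $R$, and $\rightarrow\rightarrow$ for $\wndr{R}$) are exactly what this splitting needs. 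For example, $R^{12}_{nm}=\bigl(\prod_{j}^{\leftarrow}R_{1\,n+j}\bigr)\,R^{12}_{n-1,m}$.

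\emph{Main obstacle.} The hard part is the bookkeeping in the second identity. The triple products that appear there have the form $\R_{0j}(\,\cdot+h\mathcal{C})$, $\wndr{\R}_{0k}(\,\cdot-h\mathcal{C})$ and $R_{jk}$, and one has to check that their arguments still satisfy the additive Yang--Baxter constraint $a+c=b$. The shifts are $+h\mathcal{C}$ and $-h\mathcal{C}$ on the two sides. Before running the induction, I would first verify the case $n=1$, $m=2$ by hand: there $(z_1-z_2+u_1-v_1+h\mathcal{C})+(2z_2+v_1+v_2)=z_1+z_2+u_1+v_2+h\mathcal{C}$, which matches the $\wndr{\R}$ factor on the right-hand side but not the $-h\mathcal{C}$ factor on the left-hand side. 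This means the internal $R_{jk}$ has to be moved across in the order dictated by the right-hand side of \eqref{reflect3gen}, passing through the middle $\mathcal{B}^+$ rather than the $\wndr{\R}$ factor. Fixing this order in the $n=1$, $m=2$ case will tell us how to organize the general induction.
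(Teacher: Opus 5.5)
\textbf{Assessment.} Your proposal is correct. Note that the paper gives no proof of this lemma at all: it presents it as the evident multi-factor extension of the defining reflection relations. Your double induction supplies the standard argument. Each step uses the single-pair reflection relation plus Yang--Baxter moves of the internal $R$-matrices, with mixed $R/\R$ triples allowed because $g$ is scalar. I checked the cases $n=1$, $m=2$ and $n=2$, $m=1$ for both identities, and the argument goes through as you describe. The factorizations you state for $\mathcal{B}^{\pm}_{[m]}$, $R^{12}_{nm}$ and $\wndr{R}^{12}_{nm}$ are valid, and unitarity is never actually needed.

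\textbf{The flagged obstacle.} The obstacle you raise for the second identity does not occur with the order you prescribe in Step 1. Write $x=z_1+u_1$ and $y_j=z_2+v_j$, and suppose the induction hypothesis has already been applied.
\begin{itemize}
\item On the right of $\mathcal{B}^{-}_0(x)$, the factors that must cross an internal $R_{jk}(y_j+y_k)$ are the post-reflection $\R_{0j}(x-y_j-h\mathcal{C})$ and the original $\wndr{\R}$-factor $\R_{0k}(x+y_k-h\mathcal{C})$.
\item On the left, they are the original $\R_{0k}(x-y_k+h\mathcal{C})$ and the post-reflection $\R_{0j}(x+y_j+h\mathcal{C})$.
\end{itemize}
In both triples the two shifts agree, so the additive Yang--Baxter constraint holds automatically. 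The mismatched triple you computed, a pre-reflection $+h\mathcal{C}$ factor against the $-h\mathcal{C}$ factor $\wndr{\R}$, never has to be commuted.

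\textbf{Step 2.} Here the shifts cancel rather than agree. For example, with $x_i=z_1+u_i$, the relevant triple is $\R_{k,n+j}(x_k-y_j+h\mathcal{C})\,R_{1k}(x_1+x_k)\,\R_{1,n+j}(x_1+y_j-h\mathcal{C})$. The two outer arguments sum to $x_1+x_k$, which is exactly the argument of the unshifted internal $R_{1k}$.
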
 

In the next theorem, we use the notion of quasi module, given by Definition \ref{qvoamodule}, to establish a connection between restricted $\DBB$-modules and quasi modules over the $h$-adic quantum vertex superalgebra $V^c(\glmn)$, given by Theorem \ref{glavni_teorem}.

\begin{thm}\label{teorem_kvazi}
 Let W be a restricted $\DBB$-module of level $c\in\CC$.
There
exists a unique structure of quasi $V^{2c}(\glmn)$-module   
 on $W$ such that  
\beq\label{quasimodulemap}
Y_W(T_{[n]}^+(u ) \vac ,z)=
\mathcal{B}_{[n]}^+(u|z )_W\ts 
\mathcal{B}_{[n]}^-(u|z+hc )^{-1}_W.
\eeq
\end{thm}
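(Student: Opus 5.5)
The plan is to follow the strategy of the even case in \cite{Koz}, which in turn adapts the Etingof--Kazhdan argument from Theorem \ref{glavni_teorem}(2). The $\ZZ_2$-grading condition \eqref{super_03} should follow, as in the proof of Theorem \ref{glavni_teorem}, from the evenness of all coefficients of $\mathcal{B}^\pm(u)$ and of the $R$-matrices.

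\textbf{Step 1: well-definedness of $Y_W$.} Since $V^{2c}(\glmn)$ is spanned ($h$-adically) by the coefficients of $T_{[n]}^+(u)\vac$, formula \eqref{quasimodulemap} determines $Y_W$ uniquely if it exists. For existence, I would show that the assignment respects the relations among these vectors. By the PBW-type description of the vacuum module, the only relations are those coming from the $RTT$ relation \eqref{hadic_rtt12} for $T^+$, namely
\[
R_{12}(u_1-u_2)\,T_{[2]}^+(u)\vac=T_2^+(u_2)T_1^+(u_1)\,R_{12}(u_1-u_2)\vac,
\]
together with its consequences for longer products. One therefore needs
\[
R_{i\,i+1}(u_i-u_{i+1})\,\mathcal{B}_{[n]}^+(u|z)\,\mathcal{B}_{[n]}^-(u|z+hc)^{-1}
\]
to equal the analogous expression with the factors $i,i+1$ swapped, multiplied by $R_{i\,i+1}(u_i-u_{i+1})$ on the right. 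For the $\mathcal{B}^+$ part this follows from the first reflection equation. For the $\mathcal{B}^-$ part it follows from the same equation after inversion. The extra $R(2z+u_i+u_{i+1})$ factors built into $\mathcal{B}_{[n]}^\pm(u|z)$ are exactly what makes this work. One must also check that the expansion lands in $W((z))_h$ and that $\mathcal{B}^-_{[n]}(u|z+hc)^{-1}$ is well defined; the latter uses restrictedness.

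\textbf{Step 2: the vacuum axiom.} The identity $Y_W(\vac,z)=1$ holds trivially.

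\textbf{Step 3: quasi weak associativity.} This is the main obstacle. I would compute
\[
Y_W(T^+_{[n]}(u)\vac,z_0+z_2)\,Y_W(T^+_{[m]}(v)\vac,z_2)
\]
and move $\mathcal{B}^-_{[n]}(u|z_0+z_2+hc)^{-1}$ past $\mathcal{B}^+_{[m]}(v|z_2)$ using Lemma \ref{refl_gen_lemma}. This produces conjugation by
\[
\R^{12}_{nm}(u|v|z_0\pm 2hc)\quad\text{and}\quad \wndr{\R}^{12}_{nm}(u|v|z_0+2z_2\pm\cdots).
\]
On the other side, $Y(T^+_{[n]}(u)\vac,z_0)\,T^+_{[m]}(v)\vac$ in $V^{2c}$ produces, via the action of $T^-$ on the vacuum module (level $2c$), the factors $\R^{12}_{nm}(u|v|z_0\pm hc)$. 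The shift by $2c$ in $V^{2c}$ is expected to match the $h\mathcal{C}$ shifts in the second reflection equation. The resulting products of $\mathcal{B}$-matrices then assemble into $\mathcal{B}^\pm_{[n+m]}((u,v)|\cdot)$, whose definition includes the cross terms $R(2z_2+z_0+u_i+v_j)$. The remaining $\wndr{R}$ factors depending on $z_0+2z_2$ are not present on the vertex-algebra side and cannot be cleared by powers of $z_0$ alone. Their denominators involve $z_0+2z_2=(z_0+z_2)+z_2$, so multiplying by a polynomial $q(z_0+z_2,z_2)$ with factors $(z_1+z_2)^k$ and $z_1^k$ kills the singularities modulo $h^p$. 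This is precisely why only a quasi-module is obtained. I would carry this out modulo $h^p$, truncating all series in $u,v$ at fixed degrees as in the proof of Theorem \ref{glavni_teorem}, and I would verify the bookkeeping of signs from the super tensor products by extracting matrix entries \eqref{hentries1}.
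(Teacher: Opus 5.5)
Your plan matches the paper's proof. The paper only says that the statement follows from the $R$-matrix computations of the even case \cite[Thm.~2.7]{Koz}, carried out with Lemma~\ref{refl_gen_lemma}, and that the grading condition \eqref{super_03} is automatic; your Steps 1--3 and your parity remark spell out exactly this. Two details of Step~3 need correcting when you carry it out. First, Lemma~\ref{refl_gen_lemma} moves $\mathcal{B}^-_{[n]}(u|z_0+z_2+hc)^{-1}$ past $\mathcal{B}^+_{[m]}(v|z_2)$ only when $\R^{12}_{nm}(u|v|z_0+2hc)^{-1}$ is inserted between them, so you should verify associativity for the pair twisted by this matrix; this is the same twist that lets $T^-_{[n]}(u|z_0+hc)^{-1}$ pass $T^+_{[m]}(v)$ on $\vac$ in $V^{2c}(\glmn)$. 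Second, the factors in $z_0+2z_2$ are not absent on the vertex-algebra side: they appear there as the cross terms of $\mathcal{B}^{\pm}_{[n+m]}$, expanded in the opposite region, and their normalization has to agree with that of the $\wndr{\R}$'s produced by the lemma; reconciling the two expansions is what the factor $(z_1+z_2)^k$ in $q$ does.
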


\begin{prf}
The theorem can be verified  by lengthy but standard computations. 
They rely on the usual $R$-matrix techniques and Lemma \ref{refl_gen_lemma}   and they closely follow the proof of \cite[Thm. 2.7]{Koz}, which is the  even counterpart of this theorem. As with the second assertion of Theorem \ref{glavni_teorem}, the grading restriction  \eqref{super_03}, which is not present in \cite[Thm. 2.7]{Koz}, is an immediate consequence of the grading restriction in the definition of the notion of module over an associative superalgebra.
\end{prf}

For any $c\in\CC$, denote by $\DBBc$ the {\em reflection algebra at the level $c$}, i.e.,   the quotient of the algebra $\DBB$ over the $h$-adically completed ideal generated by $\mathcal{C}-c $. Let $J_p$ with $p\geqslant 1$ be the  $h$-adically completed  ideal of $\DBBc$ generated by all $\beta_{ij}^{(r)}$ with $r\geqslant p$ such that $h^m a\in J_p$ implies $  a\in J_p$ for all $m\geqslant 1$.
Define the completion of $\DBBc$ as the  inverse limit
$$
\DBBctld = \lim_{\longleftarrow} \DBBc  / J_p.
$$

From now on, we consider the completed reflection algebra at the {\em critical level} $c=(N-M)/2$, which we denote by $\DBBcrittld = \DBBNMtld$. 
Note that we refer to the value $ (N-M)/2$ as  critical   due to Theorem \ref{teorem_kvazi}, which, in particular,  implies that any  restricted $\DBB$-module $W$ of level $(N-M)/2$ is equipped with the structure of quasi $V^{c}(\glmn)$-module at the {\em critical level} $c=N-M$ via \eqref{quasimodulemap}. 
 By applying the corresponding quasi module map $Y_W(\cdot, z)$, as given by \eqref{quasimodulemap}, on the constant term $T^+_\mathcal{U} =T^+_\mathcal{U}(0)$ of the power series \eqref{qvoa_telambda}, we obtain
\beq\label{superywc}
Y_W( T^+_\mathcal{U} ,z)=
\str_{1,\ldots ,n}
E_\mathcal{U}\ts
\mathcal{B}_{[n]}^+(z_\mathcal{U} )_W\ts 
\mathcal{B}_{[n]}^-( z_\mathcal{U}+h(N-M)/2 )^{-1}_W 
  \in \ndo W  [[z^{\pm 1}]],
\eeq
where 
$$
z_\mathcal{U} + ha =(z+h(c_1+a),\ldots ,z+h(c_n +a)).
$$
Note that the coefficients of the series  \eqref{superywc},
\beq\label{belambdaseries}
\mathcal{B}_\mathcal{U}(z)=
 \str_{1,\ldots ,n}
E_\mathcal{U}\ts
\mathcal{B}_{[n]}^+(z_\mathcal{U} ) \ts 
\mathcal{B}_{[n]}^-( z_\mathcal{U}+h(N-M)/2)^{-1}
\eeq
can be also regarded as elements of the completed algebra $\DBBcrittld  $. 

\begin{thm}\label{central_thm_refl}
All coefficients of $\mathcal{B}_\mathcal{U}(z)$ belong to the center of $\DBBcrittld  $. 
\end{thm}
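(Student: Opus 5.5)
We argue as in the proof of Theorem \ref{super_center}. It suffices to show that every coefficient of $\mathcal{B}_\mathcal{U}(z)$ commutes with all coefficients of $\mathcal{B}^+(w)$ and of $\mathcal{B}^-(w)$, since these, together with $\mathcal{C}$, generate $\DBBcrittld$ topologically. We treat $\mathcal{B}^-(w)$ in detail; the $\mathcal{B}^+(w)$ case is symmetric.

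Write $\mathcal{B}_{[n]}(z)=\mathcal{B}_{[n]}^+(z_\mathcal{U})\,\mathcal{B}_{[n]}^-(z_\mathcal{U}+h\kappa)^{-1}$ with $\kappa=(N-M)/2$, so that $\mathcal{B}_\mathcal{U}(z)=\str_{1,\ldots,n}E_\mathcal{U}\,\mathcal{B}_{[n]}(z)$. The first step is an exchange relation. Apply Lemma \ref{refl_gen_lemma} with $m=1$ and $\mathcal{C}=\kappa$, once to $\mathcal{B}^-_0(w)$ against $\mathcal{B}^+_{[n]}$ and once (after inverting) against $\mathcal{B}^-_{[n]}(\cdot+h\kappa)^{-1}$, using the first reflection relation. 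This should give
\begin{equation*}
\mathcal{B}^-_0(w)\,\mathcal{B}_{[n]}(z)=X^-\,\mathcal{B}_{[n]}(z)\,X^+\,\mathcal{B}^-_0(w).
\end{equation*}
Here $X^-$ is a product of the inverses of $\R_{0i}(w-z-hc_i+h\kappa)$ and $\wndr{\R}_{0i}(w+z+hc_i-h\kappa)$ in a suitable order, and $X^+$ is the corresponding product of $\R_{0i}(w-z-hc_i-h\kappa)$ and $\wndr{\R}_{0i}(w+z+hc_i+h\kappa)$. The choice $c=\kappa$ is exactly what makes the shift $h\kappa$ in $\mathcal{B}^-_{[n]}$ cancel the $h\mathcal{C}$-shifts in the two pieces. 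The doubled $R$-matrix factors $\wndr{\R}$, with arguments $w+z_i$, should rearrange in the same way as the factors $\R(w-z_i)$. To verify this I would check the case $n=1$ by hand and then induct on $n$, using the internal factors $R_{i\,j}(2z+u_i+u_j)$ of $\mathcal{B}_{[n]}$ together with the Yang--Baxter equation.

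The second step mirrors the idempotent argument in the proof of Theorem \ref{super_center}. We need fusion identities: the analogues of \eqref{eu_ybe} for the products of $\R_{0i}(x-hc_i)$ and of $\R_{0i}(x+hc_i)$, and the statement that $E_\mathcal{U}\mathcal{B}^\pm_{[n]}(z_\mathcal{U})$ equals a reversed-order version of $\mathcal{B}^\pm_{[n]}(z_\mathcal{U})$ times $E_\mathcal{U}$. This last identity is the reflection-algebra fusion. Its proof specializes Lemma \ref{refl_gen_lemma}, or the reflection relation iterated with $R(u_1,\ldots,u_n)$, at $u_i=hc_i$, via \eqref{super_fussion}. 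With these, we insert $E_\mathcal{U}^2$, use supercyclicity \eqref{str_cyclic} and move copies of $E_\mathcal{U}$ around exactly as before. The left-hand side then becomes $\str_{1,\ldots,n}X^-E_\mathcal{U}\mathcal{B}_{[n]}(z)X^+$.

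The third step applies \eqref{another_prop} to take the partial supertransposes $\tau_1\cdots\tau_n$, which moves $(X^-)^{\tau}$ next to $(X^+)^{\tau}$ inside the trace. The crossing symmetry \eqref{hadic_csym} then cancels $\R_{0i}(w-z-hc_i+h\kappa)^{-1}$ against $\R_{0i}(w-z-hc_i-h\kappa)$, because the arguments differ by $2h\kappa=(N-M)h$, and likewise for the $\wndr{\R}$ pairs. This is where criticality enters. The remaining trace reduces to $\mathcal{B}_\mathcal{U}(z)$ by \eqref{str_transposition}, so $\mathcal{B}^-_0(w)$ commutes with $\mathcal{B}_\mathcal{U}(z)$.

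The main obstacle is the first step. The order of the $\R$ and $\wndr{\R}$ factors in $X^\pm$ must come out so that, after transposition, each $\R$ factor meets its crossing partner adjacently. Since both families of factors sit in tensor factor $0$, I would first try to commute them past one another using the Yang--Baxter equation in the spaces $0,i,j$ together with the internal $R$-matrices of $\mathcal{B}_{[n]}$. If that fails, I would instead bring the computation to the form of \cite{Koz} through the quasi-module structure of Theorem \ref{teorem_kvazi}.
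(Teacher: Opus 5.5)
\textbf{There is a genuine gap in your first step.} Your tools for steps 2 and 3 are the right ones: fusion for $E_\mathcal{U}$, inserting $E_\mathcal{U}^2$, supercyclicity, partial supertransposition, and crossing symmetry at shift $2h\kappa=(N-M)h$. The problem is that the exchange relation you aim for in step 1 is not available, so the argument has nothing to rest on.

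In every reflection relation the factor $\R(u+v\pm h\mathcal{C})$ sits \emph{between} the two $\mathcal{B}$'s. It acts on the auxiliary space $0$, where $\mathcal{B}^-_0(w)$ lives, so it cannot be commuted past $\mathcal{B}^-_0(w)$. It also shares space $i$ with $\mathcal{B}_{[n]}(z)$, so it cannot be commuted past that either.

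Take $n=1$ and write $\mathcal{B}_1(z)=\mathcal{B}^+_1(z)\,\mathcal{B}^-_1(z+h\kappa)^{-1}$. Combining the mixed relation at level $\kappa$ with the $\mathcal{B}^-$ relation at $v=z+h\kappa$ gives
\begin{equation*}
\mathcal{B}^-_0(w)\,\R_{01}(w+z-h\kappa)\,\mathcal{B}_1(z)\,\R_{01}(w+z+h\kappa)^{-1}
=\R_{01}(w-z+h\kappa)^{-1}\,\mathcal{B}_1(z)\,\R_{01}(w-z-h\kappa)\,\mathcal{B}^-_0(w).
\end{equation*}
The factors of type $\R(w+z\cdots)$ (your $\wndr{\R}$'s) stay attached to $\mathcal{B}_{[n]}$ on the side where $\mathcal{B}^-_0(w)$ stands on the left. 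Only factors of type $\R(w-z\cdots)$ appear on the other side.

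An identity $\mathcal{B}^-_0(w)\mathcal{B}_{[n]}(z)=X^-\mathcal{B}_{[n]}(z)X^+\mathcal{B}^-_0(w)$, with $X^\pm$ built from the $\R$- and $\wndr{\R}$-factors you list, is in fact false for any ordering once $\ell<M+N$. Comparing its $h^2$-terms with the relation above would force $[G_0,[P_{01},b]]$ to be a numerical matrix modulo $h$, where $b$ is the $h$-linear part of $\mathcal{B}_1(z)$. That contradicts the PBW theorem. So neither reordering nor Yang--Baxter manipulations will produce it; this is the real reason you could not see how the $\R$ and $\wndr{\R}$ factors pair up.

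\textbf{The fix.} Keep the dressing, and prove two trace identities instead of one. Lemma \ref{refl_gen_lemma} yields
\begin{equation*}
\mathcal{B}^{-\,13}(u)\,\wndr{\R}^{12}_{1n}(u-h\kappa|z_\mathcal{U})\,\mathcal{B}^{23}_{[n]}(z)\,\wndr{\R}^{12}_{1n}(u+h\kappa|z_\mathcal{U})^{-1}
=\R^{12}_{1n}(u+h\kappa|z_\mathcal{U})^{-1}\,\mathcal{B}^{23}_{[n]}(z)\,\R^{12}_{1n}(u-h\kappa|z_\mathcal{U})\,\mathcal{B}^{-\,13}(u).
\end{equation*}
After applying $E_\mathcal{U}$ and $\str_{2,\ldots,n+1}$, you must show that \emph{each} of the two sandwiched traces equals $I\ot\mathcal{B}_\mathcal{U}(z)$.

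The case of $\mathcal{B}^+(u)$ is analogous. There $\wndr{\R}^{12}_{n1}(z_\mathcal{U}|u)$ and $\wndr{\R}^{12}_{n1}(z_\mathcal{U}+2h\kappa|u)^{-1}$ sit on one side and the corresponding $\R$'s on the other.

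Your steps 2 and 3 then go through for each trace separately. Each trace contains only one family of $R$-matrices, with arguments differing by $2h\kappa$. So the crossing cancellations are immediate, and the interleaving problem you identified as the main obstacle never arises. This is the route the paper takes.
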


\begin{prf}
It suffices to show that
\beq\label{weneedtoprove}
\mathcal{B}^+(u)\ts \mathcal{B}_\mathcal{U}(z)=\mathcal{B}_\mathcal{U}(z)\ts \mathcal{B}^+(u)\fand
\mathcal{B}^-(u)\ts \mathcal{B}_\mathcal{U}(z)=\mathcal{B}_\mathcal{U}(z)\ts \mathcal{B}^-(u).
\eeq
Both identities can be verified by arguing as in the proof of \cite[Thm. 3.2]{Koz}, which is an even counterpart of this theorem. However, for completeness, we   provide some details of the proof.
Let
$$
\mathcal{B}_{[n]}(z) = \mathcal{B}_{[n]}^+(z_\mathcal{U} ) \ts 
\mathcal{B}_{[n]}^-( z_\mathcal{U}+h\kappa)^{-1},
$$
where $\kappa=(N-M)/2$,
so that the series \eqref{belambdaseries} can be written more briefly as
$$
\mathcal{B}_\mathcal{U}(z)=\str_{1,\ldots ,n}
E_\mathcal{U}\ts
\mathcal{B}_{[n]}(z).
$$

Consider the tensor product
$$
\overbrace{(\ndo\mathbb{C}^{M|N})^{\ot  n}}^{1} \ot 
\overbrace{ \ndo\mathbb{C}^{M|N} }^{2}\ot
\overbrace{\DBBcrittld }^{3}.  
$$
By using Lemma \ref{refl_gen_lemma} and the notation conventions \eqref{RnM3}, \eqref{RnM4} and \eqref{morenotation}, we find
\begin{align*}
&\mathcal{B}_{[1]}^{+\,23} (u)\ts  \wndr{\R}_{n1}^{12} (z_{\mathcal{U}}|u  )\ts \mathcal{B}_{[n]}^{+\, 13} ( z_{\mathcal{U}} )\ts \R_{n1}^{12} (z_{\mathcal{U}}|u  ),  \\
&\qquad\qquad= \R_{n1}^{12} (z_{\mathcal{U}}|u  )\ts \mathcal{B}_{[n]}^{+\, 13} (z_{\mathcal{U}} )\ts  \wndr{\R}_{n1}^{12} (z_{\mathcal{U}}|u   )\ts \mathcal{B}_{[1]}^{+\,23} (u),\\
&\mathcal{B}_{[1]}^{+\,23} (u)\ts   \R_{n1}^{12} (z_{\mathcal{U}}|u  )^{-1}\ts \mathcal{B}_{[n]}^{-\, 13} ( z_{\mathcal{U}}+h\kappa )^{-1}\ts \wndr{\R}_{n1}^{12} (z_{\mathcal{U}}+2h\kappa|u  )^{-1}\\
&\qquad\qquad= \wndr{\R}_{n1}^{12} (z_{\mathcal{U}}|u  )^{-1}\ts \mathcal{B}_{[n]}^{-\, 13} (z_{\mathcal{U}}+h\kappa )^{-1}\ts   \R_{n1}^{12} (z_{\mathcal{U}}+2h\kappa|u   )^{-1}\ts \mathcal{B}_{[1]}^{+\,23} (u).   
\end{align*}
Next, by combining these two identities, we obtain
\begin{align*}
&\mathcal{B}_{[1]}^{+\,23} (u)\ts  \wndr{\R}_{n1}^{12} (z_{\mathcal{U}}|u  )\ts \mathcal{B}_{[n]}^{  13} ( z )\ts \wndr{\R}_{n1}^{12} (z_{\mathcal{U}}+2h\kappa|u  )^{-1}\\
&\qquad\qquad =\R_{n1}^{12} (z_{\mathcal{U}}|u  )\ts \mathcal{B}_{[n]}^{  13} ( z )\ts \R_{n1}^{12} (z_{\mathcal{U}}+2h\kappa|u   )^{-1} \ts \mathcal{B}_{[1]}^{+\,23} (u).
\end{align*}
Finally,    applying the idempotent $E_{\mathcal{U}}$ on the tensor factors $1,\ldots ,n$ of the above equality and then taking the supertrace over the same tensor factors yields
\begin{align*}
&
\mathcal{B}^{+} (u) \left(
\str_{1,\ldots ,n}
E_\mathcal{U}\ts \wndr{\R}_{n1}^{12} (z_{\mathcal{U}}|u  )\ts \mathcal{B}_{[n]}^{  13} ( z )\ts \wndr{\R}_{n1}^{12} (z_{\mathcal{U}}+2h\kappa|u  )^{-1}\right)\\
= & \left(\str_{1,\ldots ,n}
E_\mathcal{U}\ts\R_{n1}^{12} (z_{\mathcal{U}}|u  )\ts \mathcal{B}_{[n]}^{  13} ( z )\ts \R_{n1}^{12} (z_{\mathcal{U}}+2h\kappa|u   )^{-1} \right) \mathcal{B}^{+} (u).
\end{align*}
Therefore, to verify the first identity in \eqref{weneedtoprove}, it suffices to show that both expressions inside the brackets  coincide with the series \eqref{belambdaseries}, i.e., to prove  the equalities
\begin{align*}
 \mathcal{B}_\mathcal{U}(z)\ot I & =
\str_{1,\ldots ,n}
E_\mathcal{U}\ts \wndr{\R}_{n1}^{12} (z_{\mathcal{U}}|u  )\ts \mathcal{B}_{[n]}^{  13} ( z )\ts \wndr{\R}_{n1}^{12} (z_{\mathcal{U}}+2h\kappa|u  )^{-1}\\
 &  =
\str_{1,\ldots ,n}
E_\mathcal{U}\ts\R_{n1}^{12} (z_{\mathcal{U}}|u  )\ts \mathcal{B}_{[n]}^{  13} ( z )\ts \R_{n1}^{12} (z_{\mathcal{U}}+2h\kappa|u   )^{-1} .
\end{align*}
The equalities can be proved by arguments similar to those in the corresponding part of the proof of Theorem \ref{super_center}, which shows that \eqref{super_tmp_01} holds. As before, the arguments rely on  the crossing symmetry properties \eqref{hadic_csym} and the fusion procedure \eqref{super_fussion}.

As for the second identity in \eqref{weneedtoprove}, it is more convenient to consider  the tensor product
$$
\overbrace{ \ndo\mathbb{C}^{M|N} }^{1} \ot 
\overbrace{(\ndo\mathbb{C}^{M|N})^{\ot  n}}^{2}\ot
\overbrace{\DBBcrittld }^{3}.  
$$
As before, by using Lemma \ref{refl_gen_lemma}, one obtains the equalities
\begin{align*}
&
\R_{1n}^{12}(u+h\kappa|z_{\mathcal{U}})\ts
\mathcal{B}_{[1]}^{-\,13} (u)\ts 
 \wndr{\R}_{1n}^{12} (u-h\kappa|z_{\mathcal{U}}  )\ts 
\mathcal{B}_{[n]}^{+\, 23} ( z_{\mathcal{U}} ) ,  \\
&\qquad\qquad= 
\mathcal{B}_{[n]}^{+\, 23} ( z_{\mathcal{U}} )\ts 
 \wndr{\R}_{1n}^{12} (u+h\kappa|z_{\mathcal{U}}  )\ts 
\mathcal{B}_{[1]}^{-\,13} (u)\ts 
\R_{1n}^{12}(u-h\kappa|z_{\mathcal{U}}),\\
&\mathcal{B}_{[1]}^{-\,13} (u)\ts
\R_{1n}^{12}(u-h\kappa|z_{\mathcal{U}})\ts
\mathcal{B}_{[n]}^{-\, 23} ( z_{\mathcal{U}}+h\kappa )^{-1}\ts 
 \wndr{\R}_{1n}^{12} (u+h\kappa|z_{\mathcal{U}}  )^{-1}\\ 
&\qquad\qquad= \wndr{\R}_{1n}^{12} (u+h\kappa|z_{\mathcal{U}}  )^{-1}\ts 
\mathcal{B}_{[n]}^{-\, 23} ( z_{\mathcal{U}}+h\kappa )^{-1}\ts 
\R_{1n}^{12}(u-h\kappa|z_{\mathcal{U}})\ts
\mathcal{B}_{[1]}^{-\,13} (u),   
\end{align*}
which   imply the identity
\begin{align*}
&\mathcal{B}_{[1]}^{-\,13} (u)\ts  \wndr{\R}_{1n}^{12} (u-h\kappa |z_{\mathcal{U}}  )\ts \mathcal{B}_{[n]}^{  23} ( z )\ts \wndr{\R}_{1n}^{12}(u+h\kappa |z_{\mathcal{U}}  )^{-1}\\
&\qquad\qquad =
\R_{1n}^{12}(u+h\kappa |z_{\mathcal{U}}  )^{-1}\ts 
\mathcal{B}_{[n]}^{  23} ( z )\ts
\R_{1n}^{12}(u-h\kappa |z_{\mathcal{U}}  ) \ts 
\mathcal{B}_{[1]}^{-\,13} (u)
.
\end{align*}
Applying   the idempotent $E_{\mathcal{U}}$ on the tensor factors $2,\ldots ,n+1$ of the above equality and then taking the supertrace over the same tensor factors
 yields
\begin{align*}
&\mathcal{B}^{- } (u)
\left(
\str_{2,\ldots ,n+1}
E_\mathcal{U}\ts   \wndr{\R}_{1n}^{12} (u-h\kappa |z_{\mathcal{U}}  )\ts \mathcal{B}_{[n]}^{  23} ( z )\ts \wndr{\R}_{1n}^{12}(u+h\kappa |z_{\mathcal{U}}  )^{-1}\right)\\
 = & 
\left(
\str_{2,\ldots ,n+1}
E_\mathcal{U}\ts 
\R_{1n}^{12}(u+h\kappa |z_{\mathcal{U}}  )^{-1}\ts 
\mathcal{B}_{[n]}^{  23} ( z )\ts
\R_{1n}^{12}(u-h\kappa |z_{\mathcal{U}}  ) \right)
\mathcal{B}^{- } (u)
.
\end{align*}
Therefore, to verify the second identity in \eqref{weneedtoprove}, it suffices to show that both expressions inside the brackets  coincide with   $I\ot \mathcal{B}_\mathcal{U}(z) $, which can be again done by arguing as in the corresponding part of the proof of  Theorem \ref{super_center}.
\end{prf}

At the end, we single out a simple consequence of Theorem \ref{central_thm_refl}. Consider the $\DBB$-module $\BBdcrit=\BBdNM $ from Example \ref{ex_61}. By applying   $\mathcal{B}_\mathcal{U}(z)$, defined by \eqref{belambdaseries}, on $1\in \BBdcrit$, we obtain the series in $ \BBdcrit((z))_h $,
$$
\mathcal{B}^+_\mathcal{U}(z)=
\mathcal{B}_\mathcal{U}(z)1.
$$
Note that its coefficients can be also regarded as elements of the $h$-adically completed algebra $\BBd_h$, so that  
$\mathcal{B}^+_\mathcal{U}(z)$ belongs to $ \BBd_h((z))_h$. The following corollary can be verified by the argument which follows the proof of Corollary \ref{super_kom} and employs  Theorem \ref{central_thm_refl}.

\begin{kor} 
The coefficients of all $\mathcal{B}^+_\mathcal{U}(z)$ generate a commutative subalgebra of   $\BBd_h$.
\end{kor}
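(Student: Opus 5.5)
The plan is to imitate the proof of Corollary \ref{super_kom}, with $\DBBcrittld$ playing the role of $\dygtld$ and the module $\BBdcrit$ playing the role of the vacuum module. Fix two standard tableaux $\mathcal{U}$ and $\mathcal{V}$, with entries $1,\ldots,n$ and $1,\ldots,m$ respectively. Let $1$ be the cyclic vector of $\BBdcrit$.

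First I will record how $\BBdcrit$ is identified with $\BBd_h$ as a $\CC[[h]]$-module.
\begin{itemize}
\item Under this identification, the left action of any element $x\in\BBd_h$ satisfies $x\cdot 1=x$.
\item The product of two elements $a,b\in\BBd_h$ is recovered as $a\cdot(b\cdot 1)$.
\end{itemize}
Hence it suffices to show that
\[
\mathcal{B}^+_\mathcal{U}(z)\ts\mathcal{B}^+_\mathcal{V}(w)\ts 1=\mathcal{B}^+_\mathcal{V}(w)\ts\mathcal{B}^+_\mathcal{U}(z)\ts 1
\]
in $\BBdcrit((z,w))_h$.

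Next I check that the coefficients of $\mathcal{B}_\mathcal{U}(z)$, which lie in $\DBBcrittld$, act on $\BBdcrit$. This holds because $\BBdcrit$ is a restricted module of level $(N-M)/2$. Indeed, every element of $\BBdcrit$ is annihilated by $J_p$ for $p$ large, modulo any power of $h$, so the action extends continuously to the completion. Restrictedness gives exactly this: the generators $\beta_{ij}^{(r)}$ act locally nilpotently in the $h$-adic sense.

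The main computation then runs as in Corollary \ref{super_kom}. By definition $\mathcal{B}^+_\mathcal{V}(w)=\mathcal{B}_\mathcal{V}(w)1$. Since $\mathcal{B}^+_\mathcal{U}(z)\in\BBd_h((z))_h$ acts by left multiplication, I get
\[
\mathcal{B}^+_\mathcal{U}(z)\ts\mathcal{B}^+_\mathcal{V}(w)=\mathcal{B}^+_\mathcal{U}(z)\ts\mathcal{B}_\mathcal{V}(w)\ts 1 .
\]
By Theorem \ref{central_thm_refl}, the coefficients of $\mathcal{B}_\mathcal{V}(w)$ are central. So this equals
\[
\mathcal{B}_\mathcal{V}(w)\ts\mathcal{B}^+_\mathcal{U}(z)\ts 1=\mathcal{B}_\mathcal{V}(w)\ts\mathcal{B}_\mathcal{U}(z)\ts 1 .
\]
The same argument with the roles of $\mathcal{U}$ and $\mathcal{V}$ swapped gives
\[
\mathcal{B}^+_\mathcal{V}(w)\ts\mathcal{B}^+_\mathcal{U}(z)=\mathcal{B}_\mathcal{U}(z)\ts\mathcal{B}_\mathcal{V}(w)\ts 1 .
\]
Centrality also makes $\mathcal{B}_\mathcal{U}(z)$ and $\mathcal{B}_\mathcal{V}(w)$ commute with each other, so the two expressions agree.

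The coefficients of all $\mathcal{B}^+_\mathcal{U}(z)$ therefore pairwise commute, and the subalgebra they generate, including its $h$-adic closure, is commutative.

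The main obstacle is a subtle point in the step where $\mathcal{B}^+_\mathcal{U}(z)$ "acts by left multiplication". This requires that the action of $\mathcal{B}_\mathcal{U}(z)$ on $\BBdcrit$ commutes with the right-multiplication structure. Put differently, one must know that $x\,\mathcal{B}_\mathcal{U}(z)1=\mathcal{B}_\mathcal{U}(z)\,x1$ for every $x\in\BBd_h$. This is precisely what centrality in $\DBBcrittld$ delivers, once one checks that the products make sense coefficientwise, modulo each $h^p$, in the completion. I will verify this using the degree and locality estimates behind the inverse-limit definition, just as is done implicitly for the vacuum module in Corollary \ref{super_kom}.
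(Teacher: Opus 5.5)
Your proposal is correct and is essentially the paper's proof: the paper obtains this corollary by repeating the chain of equalities from the proof of Corollary~\ref{super_kom}. As you do, it uses Theorem~\ref{central_thm_refl} to move the central series $\mathcal{B}_\mathcal{V}(w)$ past $\mathcal{B}^+_\mathcal{U}(z)$, together with $\mathcal{B}^+_\mathcal{U}(z)=\mathcal{B}_\mathcal{U}(z)1$ in $\BBdcrit$. The compatibility you flag as the main obstacle, namely that $\DBBcrittld$ acts on $\BBdcrit$ compatibly with left multiplication by $\BBd_h$, is left implicit in the paper too, and your restrictedness argument for it goes through once $J_p$ is read as a left ideal, as with $I_p$.
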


\section*{Acknowledgement} 
L. B. is member of Gruppo Nazionale per le Strutture Algebriche, Geometriche e le loro Applicazioni  (GNSAGA) of the Istituto Nazionale di Alta Matematica (INdAM) and part of the project MMNLP (Mathematical Methods in Non Linear Physics) of INFN.
L. B. was partially supported by the project Representation Theory and Applications, Bando Ateneo 2023 of Sapienza University of Rome.
S. K. is partially supported by the Croatian Science Foundation under the project IP-2025-02-4720  and by the project ``Implementation of cutting-edge research and its application as part of the Scientific Center of Excellence for Quantum and Complex Systems, and Representations of Lie Algebras'', Grant No. PK.1.1.10.0004, co-financed by the European Union through the European Regional Development Fund - Competitiveness and Cohesion Programme 2021--2027.
This research was supported by the European Union -- NextGenerationEU through the National Recovery and Resilience Plan 2021-2026. Institutional grant of University of Zagreb Faculty of Science Strengthening scientific production, international presence and social impact of mathematical research (IK IA 1.1.3. Impact4Math).

\end{document}